\documentclass[12pt]{dalthesis}

\input{rex-cgt-thesis.sty}

\begin{document}

\title{The combinatorial game theory \\ of Reverse Hex}
\author{Jeremiah Hockaday}

\mcs

\degree{Master of Science}
\degreeinitial{M.Sc.}
\faculty{Science}
\dept{Mathematics and Statistics}

\defencemonth{December}\defenceyear{2025}

\nolistoftables

\frontmatter
\begin{abstract}
Rex, short for Reverse Hex, is a set coloring game in which players try to avoid connecting terminals of their color. Combinatorial game theory (CGT) is the study of perfect strategy games. Until recently, both Rex and Hex were not examined through the lens of CGT. In this thesis we take inspiration from the study of normal play games by Berlekamp, Conway, and Guy, along with the combinatorial game theory of Hex developed by Selinger, to develop methods for analyzing Rex positions. We explore how to tell if one position is preferable to another, how to simplify positions, and some special properties of Rex (and antimonotone set coloring games in general). By the end of this thesis we will be able to take a position in a game of Rex, break it into smaller positions, analyze each of the smaller positions, then add the results back together to more easily determine who wins and loses the larger position.
\end{abstract}

\prefacesection{List of Abbreviations and Symbols Used}

CGT $\hfill$ Combinatorial game theory.

Rex $\hfill$ Reverse Hex.

$\leq_A$ $\hfill$ The order given a poset $A$ (page \pageref{def:posets}).

$\inteq$ $\hfill$ Preorder equivalence or the intrinsic equivalence (see pages \pageref{def:posets}, \pageref{intrinsic_order}).

$\leq_o$, $\inteq_o$ $\hfill$ Order based on outcome classes (see page \pageref{concrete_*_antimonotonicity}).

$\leqc$, $\contri$, $\conteq$ $\hfill$ The contextual order (see page \pageref{cont_order}).

$\cateq$ $\hfill$ Isomorphism of posets (see page \pageref{def:monofun}).

$2^X$ $\hfill$ The poset formed by the subsets of $X$ (see page \pageref{subsec:common_posets}).

$\bool$ $\hfill$ The boolean poset (see page \pageref{subsec:common_posets}).

$\deadset$ $\hfill$ The one element poset (see page \pageref{subsec:common_posets}).

$\bot$ $\hfill$ The least element of a poset (see page \pageref{subsec:common_posets}).

$\top$ $\hfill$ The greatest element of a poset (see page \pageref{subsec:common_posets}).

$\hom(A,B)$ $\hfill$ Morphisms from $A$ to $B$ (see page \pageref{subsec:common_posets}).

$\fun{A}$ $\hfill$ Shorthand for $\hom(A, \bool)$ (see page \pageref{subsec:common_posets}).

$\op{a}$ $\hfill$ Elements of the dual of a poset (see page \pageref{subsec:common_posets}).

$\op{f}$ $\hfill$ The dual of a function (see page \pageref{subsec:common_posets}).

$\op{G}$ $\hfill$ The dual of a game (see pages \pageref{def:dual}, \pageref{def:dual2}).

$\lambda$, $\rho$, \emph{and}, \emph{or} $\hfill$ Special functions between posets (see page \pageref{Background:mono_fun}).

$G$, $G^L$, $G^R$, $\gc{G^L}{G^R}$ $\hfill$ Typical game form (see pages \pageref{sec:CGT}, \pageref{def:game_form}).

$G+H$ $\hfill$ Sum of games (see pages \pageref{sec:CGT}, \pageref{sum_on_game}).

$o_L(G)$, $o_R(G)$, $o(G)$ $\hfill$ (Left, right) outcome class of $G$ (see page \pageref{def:LR_outcome_class}).

$\gl$ , $\gn$ , $\gp$ , $\gr$ $\hfill$ Outcome classes (see page \pageref{def:outcome_class}).

$\blackstone*{a}$ $\hfill$ A black stone labeled $a$.

$\whitestone*{a}$ $\hfill$ A white stone labeled $a$.

$\inlinehex*{a}$ $\hfill$ An empty cell labeled $a$.

$*$ $\hfill$ A dead cell (see page \pageref{dead_cell}) and its game form (see page \pageref{SAM_game}).

$[a]$ $\hfill$ Atomic game (see page \pageref{def:game_form}).

$f(G)$ $\hfill$ Image of a game via a monotone function (see page \pageref{map_on_game}).

$G+_f H$ $\hfill$ Shorthand for $f(G+H)$ (see page \pageref{map_on_game}).

$\lcomp{G}{H}$ $\hfill$ The comparison game between $G$ and $H$ (see page \pageref{prop:copycat}).


\begin{acknowledgements}
  I would like to thank my lovely supervisors Svenja Huntemann and Peter Selinger, without whom I could not have written this thesis. I would also like to thank all of my friends at Dalhousie, without whom I could not have \emph{stopped} to take a break. Most importantly, I give thanks to my wonderful mother Angela, who inspired my love of learning at a young age and has always supported me in my endeavors. 
\end{acknowledgements}

\mainmatter

\chapter{Introduction}

Combinatorial Game Theory (CGT) is the study of two player perfect information games with no randomness.
The goal of CGT research is to simplify the process of determining who is winning and losing at any given point of a game.
It has been used for the study of \emph{normal play games} (games where the winner is the person who moves last) to great success.
Over the course of some games the board divides into disjoint regions.
When this happens, it is useful to analyze these disjoint regions separately and ``add'' them back together.

The basic idea of CGT is to find a way to add games together, to define a notion of equivalence on games, and to develop methods for simplifying games.
We assign values to these games, called the \emph{canonical form} of a game, which helps us reason about them more easily.
When we can split a game into multiple smaller games, the canonical form of each of these pieces can be found, and the simplified pieces can be added together.
Being able to do this can dramatically reduce the amount of time required to compute who wins and loses in the larger game.

There has been research in developing the combinatorial game theory of games with rulesets other than normal play, such as mis\`ere games, where the last player to make a move \emph{loses}, and scoring games, where points are assigned to both players and the player with the most points at the end of the game wins (see \cite{universes} for more information on mis\`ere and scoring games).
Recently, Peter Selinger has developed the combinatorial game theory of $\Hex$ \cite{hex_canon}.

First, we give some background on concepts we will use in this paper, including posets and conventions of CGT.
Afterwards we review some properties of $\Rex$ that are highlighted in this thesis, as well as some generalizations of $\Rex$. 
In \cref{Games_over_posets} we discuss how to interpret the outcome of one region of a Rex board using posets.
Then in \cref{Evaluating_games} we define what it means for one game to be better than another game.
We will define this in two ways, called the contextual order and the intrinsic order.
Next, in \cref{Properties_of_Rex_revisited} we discuss special properties of $\Rex$, such as premotivity and $*$-antimonotonicity.
Afterwards in \cref{Intrinsic_extrinsic_united} we prove that the contextual order is equivalent to the intrinsic order when restricted to a special class of games.
Finally, in \cref{Canonical_forms} we prove that every finite game within this special class of games has a unique canonical form, i.e., a unique smallest game that is equivalent to it.

\section{How to play Hex and Rex}
\subsection{Hex}\label{how_to_play_Hex}
\begin{figure}[ht]
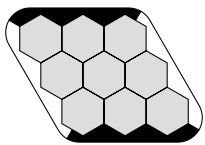

  \centering
  \begin{hexboard}[scale=0.8]
    \rotation{-30}
    \board(3,3)
  \end{hexboard}
  \caption{An empty Hex (or $\Rex$) board.}
  \label{fig:null_board}
\end{figure}
Hex is a board game played on a parallelogram like the one in \cref{fig:null_board}.
The board consists of hexagonal \emph{cells} and four colored terminals, two black and two white.
We will call the players Left and Right.
Left must play one black stone in an unoccupied cell on her turn, and Right must place one white stone on his turn.
The game ends when no moves remain.
If the two black terminals (the top and bottom of the board in \cref{fig:null_board}) are connected by black stones, then Left wins.
If the two white terminals are connected by white stones, then Right wins.

\begin{figure}[ht]
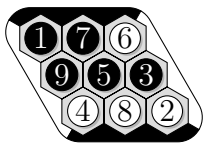

  \centering  
  \begin{hexboard}[scale=0.8]
    \rotation{-30}
    \board(3,3)
    
    \black(1,1)\label{1}
    \white(3,3)\label{2}
    \black(3,2)\label{3}
    \white(1,3)\label{4}
    \black(2,2)\label{5}
    \white(3,1)\label{6}
    \black(2,1)\label{7}
    \white(2,3)\label{8}
    \black(1,2)\label{9}
  \end{hexboard}
  \caption{An example game of Hex (or $\Rex$).}
  \label{fig:ex_game}
\end{figure}

\cref{fig:ex_game} shows an example game of Hex.
The stones are labeled in the order in which they
were played (Left started by playing in the upper-left corner, then Right played in the lower-right corner, etc).
With the cells labeled $4$,$8$, and $2$, we see that Right has connected
the two white terminals with white stones and has won.

\subsection{Rex}\label{how_to_play_Rex}
Reverse Hex, often shortened to $\Rex$, is played in the same way as Hex, with Left and Right taking turns placing black and white stones, respectively, until the board is full.
However, at the end of a game of $\Rex$, Left \emph{loses} if her two black terminals are connected by black stones, and Right loses if his two white terminals are connected by white stones.
For an example game of $\Rex$, we can look at \cref{fig:ex_game} again.
Here, we see that Left has won this game because Right's two terminals are connected and Left's two terminals are not.

Note that we have defined the rules of $\Rex$ (and Hex) so that the game only ends when both boards are completely filled. This convention is convenient for the theory we develop in this thesis. In practice, the players will stop playing as soon as one of them has made a connection, because the winner can no longer change after this point.

\chapter{Background}\label{Background}

\section{Posets}\label{Background:Posets}

In this thesis we will use posets as a basis for our game forms. In light of this,  in this section we provide some of the definitions and conventions for posets that we will use.

\begin{definition}\label{def:posets}
  A \emph{partially ordered set}, or \emph{poset}, is a set $A$ with a relation ${\leq_A}\subseteq A\times A$ such that the following hold:
  \begin{enumerate}
  \item (\emph{Reflexivity}) For all $a\in A$, it must be that $a\leq_A a$.
  \item (\emph{Antisymmetry}) Given any two elements $a,b\in A$, if $a\leq_Ab$ and $b\leq_Aa$ then $a=b$.
  \item (\emph{Transitivity}) Given any three elements $a,b,c\in A$, if $a\leq_Ab$ and $b\leq_Ac$ then $a\leq_Ac$.
  \end{enumerate}
\end{definition}
When it is clear what poset we are dealing with, we will omit the $_A$ and write $a\leq b$.

Let $A$ be a poset, and let $A'$ be a subset of $A$.
Then $A'$ forms a poset with the induced order, i.e., ${\leq_{A'}}={\leq_A}\cap {(A'\times A')}$.

Let $A$ and $B$ be two posets.
The \emph{Cartesian product} $A\times B$ forms a poset with the componentwise order: namely,
\[(a_1,b_1)\leq_{A\times B}(a_2,b_2)\iff a_1\leq_A a_2\text{ and }b_1\leq_B b_2.\]
In a poset, we write $x<y$ when $x\leq y$ and $x\neq y$.

A \emph{preordered set} $P$ is defined similarly to how a poset is defined, but the relation on a preordered set does not need to be antisymmetric.
This means that there can be two elements $x,y\in P$ such that $x\leq y$ and $y\leq x$ and $x\neq y$.
Whenever $P$ is a preordered set, we can define an equivalence relation on $P$.
We say $x\inteq y$ whenever $x\leq y$ and $y\leq x$.
Then the quotient set $P/{\simeq}$ is partially ordered.

\subsection{Monotone functions}
\begin{definition}\label{def:monofun}
  Let $A$ and $B$ be posets and let $f$ be a function from $A$ to $B$.
  We say $f$ is \emph{monotone} if it preserves the order, i.e.,
  \[\forall x,y \in A, \,\,\,\, x\leq_A y\Rightarrow f(x)\leq_Bf(y).\] \end{definition}
Two posets $A$ and $B$ are \emph{isomorphic} to each other if there exist monotone functions $f\colon A\rightarrow B$ and $g\colon B\rightarrow A$ such that $f$ and $g$ are mutual inverses. We denote this as $A\cateq B$.

\subsection{Some common posets}\label{subsec:common_posets}
Given any set $X$, we use $2^X$ to denote the set of \emph{subsets of $X$}.
This forms a poset under the subset relation $\subseteq$.

The boolean poset $\mathbb{B}$ has two elements, $\bot$ and $\top$, with $\bot\leq\top$.
We sometimes think of $\bot$ and $\top$ as the boolean values \emph{false} and \emph{true}, respectively.

The poset $\deadset$ has only one element, and we refer to its only element as $0$.
When dealing with posets, $\deadset$ is the multiplicative identity with respect to the Cartesian product, i.e.,  $A\times\deadset$ is isomorphic to $A$ for all posets $A$.
Because of this, we often identify $A\times\deadset$ with $A$.

For two posets $A$ and $B$, let $\hom(A,B)$ be the set of monotone functions from $A$ to $B$.
We can turn $\hom(A,B)$ into a poset: we say $f\leq_{\hom(A,B)}g$ if $f(x)\leq_B g(x)$ for all $x$ in $A$.

Later on we will frequently refer to the poset $\hom(A,\bool)$.
We will refer to it as $\fun{A}$.

Every poset $A$ mentioned throughout this thesis will also have a \emph{greatest} and \emph{least} element.
We denote the greatest element, called \emph{top}, by $\top$, and we denote the least element, called \emph{bottom}, by $\bot$.
For any $x$ in $A$, we have $\bot\leq x \leq \top$.

Given a poset $A$, we define the poset $\op{A}$ to be the set $A$ with the ordering $b\leq_{\op{A}} a$ whenever $a\leq_A b$.
When convenient, we may write $\op{a}$ and $\op{b}$ to represent elements of $\op{A}$, so that $\op{b}\leq \op{a}$ if and only if $a\leq b$.
Note that $\op{(A\times B)}=\op{A}\times\op{B}$.

If $f:A\rightarrow B$ is a monotone function, then we define $\op{f}\colon\op{A}\to\op{B}$ to be the function such that $\op{f}(\op{x})=\op{f(x)}$ for all $x$ in $A$.
\subsection{Some important monotone functions}\label{Background:mono_fun}

We would like to define a function $c\colon A\times A\rightarrow\bool$ that compares two elements of a poset, i.e., $c(x,y)=\top$ if and only if $x\leq y$.
Unfortunately, $c$ is not a monotone map.
For example, let $y$ and $z$ be elements of $A$ such that $y<z$.
Then $(y,y)\leq (z,y)$, but $c(y,y)=\top$ and $c(z,y)=\bot$.

To fix this, we need our map to go from $\op{A}\times A$ to $\bool$.
We define $\lambda$ as follows: $\lambda(\op{x},y)=\top$ if and only if $x\leq_A y$.
It can be shown that $\lambda$ is a monotone function. 

Similarly, we define $\rho\colon A\times \op{A}\to\bool$ by $\rho(x,\op{y})=\top$ if and only if $x\nleq y$.
Note that, because $\bool$ is isomorphic to $\op{\bool}$, the function $\rho$ is essentially the same as $\op{\lambda}$.

We define $\emph{and}$ and $\emph{or}$ to be the logical operations as maps from $\bool\times\bool$ to $\bool$.
These are both monotone functions.

\section{Combinatorial Game Theory}\label{sec:CGT}
Combinatorial game theory (CGT) is the study of perfect strategy games: two player games without hidden information or randomness.
At its core, CGT is about developing ways to break a game into small parts, ``simplify'' them, and add the results together.
The definition we give excludes hidden information games like Battleship and rock-paper-scissors, as well as games based on chance like Poker, Snakes and Ladders, and Ludo.
Another requirement that our games must satisfy is the infinite descending game condition.
This states that all games must end after a finite number of moves.
CGT has been used to study popular games such as \textsc{Chess}, \textsc{Checkers (Draughts)}, \textsc{Mancala}, \textsc{Go} and  \textsc{Tic-Tac-Toe}.
Some combinatorial games, such as variations of \textsc{NIM}, have been studied since the early 1900's \cite{Bouton}.
The combinatorial game theory developed herein is derived from Berlekamp, Conway, and Guy's research on \emph{normal play} games in the 1970's \cite{ww2}, as well as the work of Selinger in 2022 \cite{hex_canon}.

In CGT, games are often classified by how a winner is decided.
These include normal play games (in which a player loses if they cannot move on their turn),
mis\`ere play (in which a player \emph{wins} if they cannot make a move on their turn),
and scoring games (in which the winner is whoever gets the highest score).
Normal play games, also known as Conway games, have been extensively studied because of their nice structure.
In the following paragraphs we will use normal play games to showcase important concepts in CGT, and we outline how this may be adapted to non-normal play games like $\Rex$. The main references we use for this section are \cite{ww2}, \cite{ONAG}, and \cite{Intro_to_CGT}.

In CGT, the word \emph{ruleset} refers to the rules and structures of a game, while a \emph{position} refers to a state that is achievable under the given ruleset.
The term \emph{game form}, often just called a \emph{game}, is a recursive way of describing a position.
As mentioned before, all combinatorial games are played by two players. There are many conventions for what these two players are called: Alice and Bob, Louise and Richard, Blue and Red, etc. In this thesis we name these players \emph{Left} and \emph{Right}.

In every position, there is a set of moves that Left can make and a set of moves that Right can make. The \emph{game form} $G$ of a position $P$ is a pair of sets $\gc{L}{R}$, where every $X\in L$ is the game form of a position Left could reach if they made a move in the position $P$.
The set $R$ is defined similarly for Right.
We call every element of $L$ a \emph{left option of $G$}, and the elements of $R$ are the \emph{right options of $G$}.
We say that $G'$ is a \emph{follower} of $G$ if $G'$ is $G$ itself, any option of $G$, or (recursively) if $G'$ is a follower of some option of $G$. Informally, a follower is any potential state of the game reachable from $G$.

In this thesis we will use a few notational conventions that are common in CGT.
For instance, we denote a game $\gc{\{W,X\}}{\{Y,Z\}}$ by $\gc{W,X}{Y,Z}$, separating the individual elements with commas.
When convenient, we write $G=\gc{G^L}{G^R}$, where $G^L$ denotes a typical left option of $G$ and $G^R$ denotes a typical right option of $G$. We use this notation even when $G$ has more than one left or right option.

The simplest Conway game is the game with no left options and no right options.
This game, called $0$, can be represented as $0=\gc{}{}$.

There are many ways of adding two games in CGT.
In this thesis we will only be using the disjunctive sum of games.
When we take the disjunctive sum of two normal play games $G$ and $H$, we define this as \[G+H\identical\gc{G^L+H,G+H^L}{G^R+H,G+H^R}.\]
Informally, we think of this as playing both games at the same time. On Left's turn they chose to make a move in $G$ or make a move in $H$, and similarly for Right.

The game $0$ acts as an identity for the disjunctive sum: For any normal play game $G$, the game $G+0$ is the same as the game $G$.

\cref{fig:big_rex} depicts a $\Rex$ position.
Say we wanted to determine if Left has a first player winning strategy from this point in the game.
There are $10$ empty cells left on the board.
Left starts with $10$ options of where to move, then Right has $9$ options remaining, then Left has $8$, and so on, meaning the number of ways the game could play out is $10!$, or $3,628,800$.
We will adapt ideas from combinatorial game theory to make analyzing games like this simpler.

First, notice that playing the game in \cref{fig:big_rex} is equivalent to playing the disjunctive sum of the three games with gluing instructions (see \cref{fig:split_big_rex}).
The games $G_1$, $G_2$, and $G_3$ have $3$, $3$, and $4$ empty cells, respectively.
It is clear that $4!+3! +3!$, or $36$, is much smaller than $10!$.
By analyzing $G_1,G_2,$ and $G_3$ separately and adding/gluing the results together, we can reduce computation time drastically. 
\begin{figure}[ht]
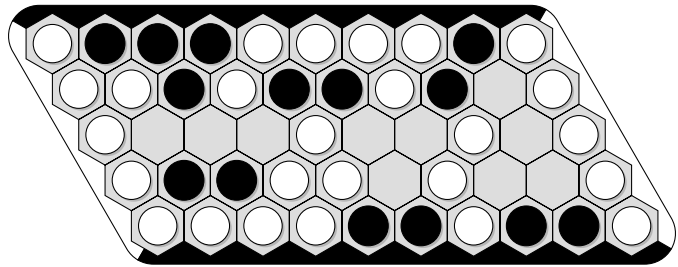

  \centering
  \begin{hexboard}
    \rotation{-30}
    \board(10,5)
    \white(1,1)\black(2,1)\black(3,1)\black(4,1)\white(5,1)\white(6,1)\white(7,1)\white(8,1)\black(9,1)\white(10,1)
    \white(1,2)\white(2,2)\black(3,2)\white(4,2)\black(5,2)\black(6,2)\white(7,2)\black(8,2)\white(10,2)
    \white(1,3)\white(5,3)\white(8,3)\white(10,3)
    \white(1,4)\black(2,4)\black(3,4)\white(4,4)\white(5,4)\white(7,4)\white(10,4)
    \white(1,5)\white(2,5)\white(3,5)\white(4,5)\black(5,5)\black(6,5)\white(7,5)\black(8,5)\black(9,5)\white(10,5)
  \end{hexboard}
  
  \caption[A large game of $\Rex$.]{This game of $\Rex$ is already too big to analyze by hand...}
  \label{fig:big_rex}
\end{figure}

\begin{figure}[ht]
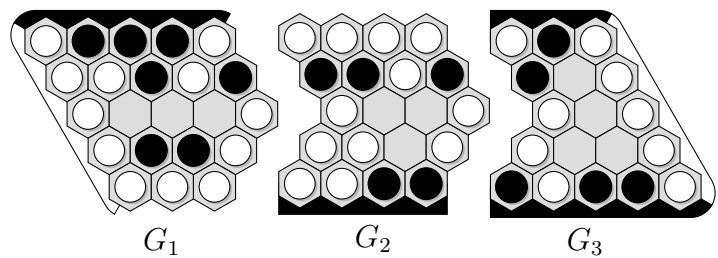

  \centering
  \[
    \begin{hexboard}[scale=0.80, baseline=(current bounding box.center)]
      \rotation{-30}
      \foreach \i in {1,...,5} { \hex(\i,1) }
      \white(1,1)\black(2,1)\black(3,1)\black(4,1)\white(5,1)
      \foreach \i in {1,...,5} { \hex(\i,2) }
      \white(1,2)\white(2,2)\black(3,2)\white(4,2)\black(5,2)
      \foreach \i in {1,...,5} { \hex(\i,3) }
      \white(1,3)\white(5,3)
      \foreach \i in {1,...,4} { \hex(\i,4) }
      \white(1,4)\black(2,4)\black(3,4)\white(4,4)
      \foreach \i in {1,...,3} { \hex(\i,5) }
      \white(1,5)\white(2,5)\white(3,5)
      \edge[\nw\noobtusecorner] (1,1)(5,1)
      \edge[\sw] (1,1)(1,5)
      \node at \coord(1,6.5) {$G_1$};
    \end{hexboard} 
    \begin{hexboard}[scale=0.80, baseline=(current bounding box.center)]
      \rotation{-30}
      \foreach \i in {5,...,8}{ \hex(\i,1) }
      \white(5,1)\white(6,1)\white(7,1)\white(8,1)
      \foreach \i in {5,...,8}{ \hex(\i,2) }
      \black(5,2)\black(6,2)\white(7,2)\black(8,2)
      \foreach \i in {5,...,8}{ \hex(\i,3) }
      \white(5,3)\white(8,3)
      \foreach \i in {4,...,7}{ \hex(\i,4) }
      \white(4,4)\white(5,4)\white(7,4)
      \foreach \i in {3,...,6}{ \hex(\i,5) }
      \white(3,5)\white(4,5)\black(5,5)\black(6,5)
      \edge[\se\noobtusecorner\noacutecorner](3,5)(6,5)
      \node at \coord(4,6.5) {$G_2$};
    \end{hexboard} 
    \begin{hexboard}[scale=0.80, baseline=(current bounding box.center)]
      \rotation{-30}
      \foreach \i in {8,...,10}{ \hex(\i,1) }
      \white(8,1)\black(9,1)\white(10,1)
      \foreach \i in {8,...,10}{ \hex(\i,2) }
      \black(8,2)\white(10,2)
      \foreach \i in {8,...,10}{ \hex(\i,3) }
      \white(8,3)\white(10,3)
      \foreach \i in {7,...,10}{ \hex(\i,4) }
      \white(7,4)\white(10,4)
      \foreach \i in {6,...,10}{ \hex(\i,5) }
      \black(6,5)\white(7,5)\black(8,5)\black(9,5)\white(10,5)
      \edge[\nw\noacutecorner](8,1)(10,1)
      \edge[\ne](10,1)(10,5)
      \edge[\se\noobtusecorner](6,5)(10,5)
      \node at \coord(7,6.5) {$G_3$};
    \end{hexboard}
  \]
  \caption[Large game of $\Rex$, piecemeal.]{... but breaking the game into smaller pieces aids computation.}
  \label{fig:split_big_rex}
\end{figure}

Often in this paper, we will use a proof technique known as Conway induction.
Below we will state the Conway induction theorem for normal play games:
\begin{theorem}[Conway induction]
  Let $P$ be a property that a game can have.
  Suppose that, given a game $G$, if $P(G')$ is true for all left options and all right options $G'$ of $G$, then $P(G)$ is true.
  Then $P(G)$ is true for all games $G$.
\end{theorem}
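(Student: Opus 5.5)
The natural route is a proof by contradiction that leans on the infinite descending game condition from \cref{sec:CGT}, namely that every game ends after finitely many moves. In the language of followers, this says there is no infinite sequence $G_0, G_1, G_2, \dots$ in which each $G_{i+1}$ is an option (left or right) of $G_i$. Equivalently, the relation ``$G'$ is an option of $G$'' is well-founded on the class of games. Conway induction is then just well-founded induction along this relation.

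Assume the hypothesis: whenever $P$ holds for every option of $G$, it also holds for $G$. Suppose, for contradiction, that some game $G_0$ fails $P$. Apply the contrapositive of the hypothesis to $G_0$: since $P(G_0)$ is false, not every option of $G_0$ satisfies $P$. So we may pick an option $G_1$ of $G_0$, left or right, with $P(G_1)$ false. Repeating this, we choose $G_{i+1}$ as an option of $G_i$ with $P(G_{i+1})$ false; this uses dependent choice, which is harmless here. The result is an infinite sequence of games, each an option of the previous one.

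This sequence describes an infinite play starting from $G_0$, in which players may move in any order, since we only require that each step is some option. A play of this kind need not alternate between Left and Right, so we must read the ending condition as forbidding infinite runs of arbitrary option-steps, not just alternating ones. This is the standard reading: games are built recursively as pairs of sets of previously constructed games, so option-chains must terminate. Reading it this way contradicts the infinite descending game condition. Therefore $P(G)$ holds for every $G$.

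Two remarks. First, the base case is included automatically: a game with no options, such as $0=\gc{}{}$, satisfies the hypothesis vacuously, so $P$ holds for it. Second, the only delicate point is the one just addressed: matching the informal ``every game ends'' condition with well-foundedness of the option relation rather than of alternating play. If one prefers to avoid choice, the argument can be run directly as a recursion on the construction of games (for instance, on the birthday or rank). In that version one proves $P$ for all games of rank below $\alpha$ by transfinite induction, using that every option of $G$ has strictly smaller rank than $G$.
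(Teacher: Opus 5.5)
Your proposal is correct and follows essentially the same argument as the paper: assume $P$ fails at some $G$, repeatedly pick an option at which $P$ fails, and derive an infinite chain of options that contradicts the infinite descending game condition. Your observation that this chain need not alternate between Left and Right moves, so the condition must be read as ruling out every infinite chain of options, states explicitly an assumption the paper uses without comment.
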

\begin{proof}
  Let $P$ be as above, and suppose there is a game $G$ such that $P(G)$ is false.
  Then there must be some left option or right option $G^{(1)}$ of $G$ such that $P(G^{(1)})$ is false.
  Then there must be some left or right option $G^{(2)}$ of $G^{(1)}$ such that $P(G^{(2)})$ is false.
  Continuing like this we find that for any $n$ there exists a descendant $G^{(n+1)}$ of $G^{(n)}$, which violates the infinite descending game condition that we placed on normal play games.
  This is a contradiction, so $P(G)$ must be true.
\end{proof}

A more general version of this theorem can be found in \cite{Intro_to_CGT}.
In \cref{Games_over_posets} we will prove a version of this theorem which is more appropriate for $\Rex$.

In CGT, we orient everything through the eyes of Left.
Thus, when we say something is better, we mean better for Left.
Sometimes we want to make a claim from Right's perspective.
When this is necessary, we use the the dual of the game, denoted by $\op{G}$, in which Left and Right switch sides.
\begin{definition}[Dual of a game]\label{def:dual}
  Let $G\identical\gc{G^L}{G^R}$ be some game.
  The \emph{dual} of the game $G$ is called $\op{G}$, and is defined as $\op{G}=\gc{\op{(G^R)}}{\op{(G^L)}}$.
\end{definition}

Given any concept, property, or proof in CGT, there is a \emph{dual} concept in which the roles of Left and Right are swapped. For example, take the definition of Left having a first and second player winning strategy:

\begin{definition}[First and second player winning strategy for Left]
  We define a first (second) player winning strategy for Left recursively as follows:
  \begin{itemize}
  \item Left has a first player winning strategy in a game $G$ if Left has an option $G^L$ of $G$ such that Left has a second player winning strategy in $G^L$.
  \item Left has a second player winning strategy in a game $G$ if, for every right option $G^R$ of $G$, Left has a first player winning strategy in $G^R$.
  \end{itemize}
\end{definition}

This definition begets a dual definition in which Left and Right swap places:
\begin{itemize}
\item \emph{Right} has a first player winning strategy in a game $G$ if \emph{Right} has an option $G^R$ of $G$ such that \emph{Right} has a second player winning strategy in $G^R$, and
\item \emph{Right} has a second player winning strategy in a game $G$ if, for every \emph{Left} option $G^L$ of $G$, \emph{Right} has a first player winning strategy in $G^L$.
\end{itemize}
In this thesis we will not explicitly state the dual of a concept/property/proof when its dual follows naturally.

Note that Left has a first player winning strategy if and only if Right does not have a second player winning strategy, which can be proved using Conway induction.

In CGT we care about the \emph{outcome class} of a game, i.e., who wins under optimal play when going first and who wins when going second. We break this up into left and right parts:
\begin{definition}[Left and right outcome classes]\label{def:LR_outcome_class}
  The left outcome class of $G$, denoted by $o_L(G)$, is defined as follows:
  \begin{itemize}
  \item $o_L(G)=\top$ when Left has a winning strategy with Left going first, and
  \item $o_L(G)=\bot$ when Right has a winning strategy with Left going first.
  \end{itemize}
  The right outcome class of $G$, denoted by $o_R(G)$, is defined as follows:
  \begin{itemize}
  \item $o_R(G)=\top$ when Left has a winning strategy with Right going first, and
  \item $o_R(G)=\bot$ when Right has a winning strategy with Right going first.
  \end{itemize}
\end{definition}

To know who wins going first and who wins going second, it is sufficient to know $o_L(G)$ and $o_R(G)$. We define outcome classes as follows:

\begin{definition}[Outcome class]\label{def:outcome_class}
  The \emph{outcome class} of a game $G$, denoted by $o(G)$, is
  \begin{itemize}
  \item $o(G)=\gl$ if $(o_L(G),o_R(G))=(\top,\top)$, i.e., Left has a first and second player winning strategy in $G$.
  \item $o(G)=\gn$ if $(o_L(G),o_R(G))=(\top,\bot)$, i.e., Left and Right both have a first player winning strategy in $G$.
  \item $o(G)=\gp$ if $(o_L(G),o_R(G))=(\bot,\top)$, i.e., Left and Right both have a second player winning strategy in $G$.
  \item $o(G)=\gr$ if $(o_L(G),o_R(G))=(\bot,\bot)$, i.e., Right has a first and second player winning strategy in $G$.
  \end{itemize}
\end{definition}

We can think of $o(G)$ as a function which sends a game to $\bool\times\bool$. Then we can turn the outcome classes into a poset, as in \cref{fig:poset_outcomes}.
This gives us the following lemma:
\begin{lemma}
  Let $G$ and $H$ be games.
  Then $o(G)\leq o(H)$ if and only if both of the following hold:
  \begin{itemize}
  \item if Left has a first player winning strategy in $G$ then Left has a first player winning strategy in $H$, and
  \item if Left has a second player winning strategy in $G$ then Left has a second player winning strategy  in $H$.
  \end{itemize}
  \label{strategy_outcome}
\end{lemma}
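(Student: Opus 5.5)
The plan is to unwind the definitions directly. The order on outcome classes in \cref{fig:poset_outcomes} is the componentwise order on $\bool\times\bool$, where $o(G)$ is identified with the pair $(o_L(G),o_R(G))$. So $o(G)\leq o(H)$ holds if and only if
\[o_L(G)\leq_{\bool} o_L(H)\quad\text{and}\quad o_R(G)\leq_{\bool} o_R(H).\]
I will then translate each of these two componentwise inequalities into the corresponding bullet of the statement.

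The translation uses a basic fact about $\bool$: for $a,b\in\bool$ we have $a\leq b$ if and only if ($a=\top$ implies $b=\top$). This holds because the only pair violating $a\leq b$ is $a=\top$, $b=\bot$. Applied to the left component, $o_L(G)\leq o_L(H)$ is equivalent to: $o_L(G)=\top$ implies $o_L(H)=\top$. By \cref{def:LR_outcome_class}, $o_L(G)=\top$ means exactly that Left has a winning strategy in $G$ when Left moves first, i.e., a first player winning strategy for Left. So the left component gives the first bullet.

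Similarly, $o_R(G)=\top$ means Left wins in $G$ when Right moves first, i.e., Left has a second player winning strategy. So the right component gives the second bullet. Combining the two equivalences yields the lemma.

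The only step needing care is that $o_L$ and $o_R$ are well defined, taking exactly one of the values $\top$ and $\bot$. This means that when Left moves first, exactly one of the two players has a winning strategy. It follows from the remark that Left has a first player winning strategy if and only if Right does not have a second player winning strategy, which is proved by Conway induction. I would cite that remark rather than reprove it. With this in hand the proof is purely definitional, so I do not expect a real obstacle.
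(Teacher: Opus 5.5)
Your proposal is correct and follows the same route as the paper, which gives no separate proof and treats the lemma as immediate from viewing $o(G)$ as the pair $(o_L(G),o_R(G))\in\bool\times\bool$ with the componentwise order shown in \cref{fig:poset_outcomes}. Your componentwise unwinding is exactly that argument, and citing the remark that Left has a first player winning strategy if and only if Right lacks a second player one correctly settles well-definedness.
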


\begin{figure}[h]
  \centering
  $\begin{tikzpicture}[baseline=(current bounding box.center)]
    \node (L) at (0,1) {$\gl$};
    \node (N) at (-1,0) {$\gn$};
    \node (P) at (1,0) {$\gp$};
    \node (R) at (0,-1) {$\gr$};
    \path (L) edge [-] (N);
    \path (L) edge [-] (P);
    \path (R) edge [-] (N);
    \path (R) edge [-] (P);
  \end{tikzpicture}$
  \caption{The poset of outcomes $o(G)$.}
  \label{fig:poset_outcomes}
\end{figure}
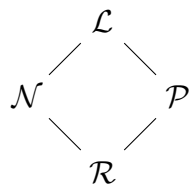

If $G$ and $H$ are normal play games, we say $G\leq H$ if, for all normal play games $X$, \[o(G+X)\leq o(H+X).\]

It turns out that $G\leq H$ if and only if Left has a second player winning strategy in the game $\op{G}+H$. We write this recursively as follows:

\begin{definition}[Order on normal play games]\label{normal_intrinsic}
  We define the two relations, $\leq$ and $\tri$, on normal play games by mutual induction.
  
  We say $G\leq H$ when Left has a second player winning strategy in $\op{G}+H$, i.e., when all of the following hold:
  \begin{enumerate}
  \item $G^L\tri H$ for all $G^L$, and
  \item $G\tri H^R$ for all $H^R$.
  \end{enumerate}

  We say $G\tri H$ when Left has a \emph{first} player winning strategy in $\op{G}+H$, i.e., when at least one of the following hold:
  \begin{enumerate}
  \item there exists some $G^R$ such that $G^R\leq H$, or
  \item there exists some $H^L$ such that $G\leq H^L$.
  \end{enumerate}  
\end{definition}

We say that $G$ and $H$ are \emph{equivalent} if $G\leq H$ and $H\leq G$.
This is not the same as saying two games are identical, by which we mean they are literally the same.

Note that in CGT, it is common to say games are ``equal'' when they are merely equivalent. We will avoid this convention for the sake of clarity.

\chapter{Some properties of \texorpdfstring{$\Rex$}{Rex} and related games}\label{Rex_concrete}

The results about $\Rex$ that this thesis provides will also apply to more general games.
Here we will discuss some special properties of $\Rex$ and we will describe some generalizations of the game that also have these properties.
The rules of $\Rex$ were given in \cref{how_to_play_Rex}. 
\section{Games like \texorpdfstring{$\Rex$}{Rex}}
One generalization of $\Rex$ called the \emph{reverse Shannon game} is played on a graph.
The rules are as follows:

\begin{definition}[Reverse Shannon game]
  Let $B$ be a finite graph with two distinguished vertices $t_N$ and $t_S$.
  The vertices $t_N$ and $t_S$ are the called the \emph{north} and \emph{south} terminals, respectively.
  The other vertices are called \emph{cells}.
  On a player's turn they must choose one uncolored cell and color it.
  Left will color cells black, and Right will color cells white.
  At the end of the game, Left wins if and only if there is no black path between the north and south terminals, i.e., no path from $t_N$ to $t_S$ that only touches black cells.
\end{definition}

Here is an example of a reverse Shannon game's graph:

\[
  \begin{tikzpicture}
    [scale=0.8,
    cell/.style={circle,draw=black!40,fill=black!10,thick,
      inner sep=0pt,minimum size=6mm},
    terminal/.style={circle,draw=black!50,fill=black!50,thick,
      inner sep=0pt,minimum size=1mm}]
    
    \node (N) at (0,4) [terminal] {};
    \node [above] at (0,4) {$t_N$};
    
    \node (a) at (-1.5,3) [cell] {};
    \node (b) at (-0.5,3) [cell] {};
    \node (c) at ( 0.5,3) [cell] {};
        
    \node (d) at (-1,2) [cell] {};
    \node (e) at (0,2) [cell] {};
    \node (f) at (1,2) [cell] {};

    \node (g) at (-0.5,1) [cell] {};
    \node (h) at (0.5,1) [cell] {};
    \node (i) at (1.5,1) [cell] {};
    
    \node (S) at (0,0) [terminal] {};
    \node [below] at (0,0) {$t_S$};

    \path (N) edge (a)
    edge (b)
    edge (c);

    \path (a) edge (b)
    edge (d);
    \path (b) edge (c)
    edge (d)
    edge (e);
    \path (c) edge (e)
    edge (f);
    
    \path (d) edge (e)
    edge (g);
    \path (e) edge (f)
    edge (g)
    edge (h);
    \path (f) edge (h)
    edge (i);

    \path (g) edge (h);
    \path (h) edge (i);
    
    \path (S) edge (g)
    edge (h)
    edge (i);
  \end{tikzpicture}
\]

$\Rex$ is a special case of the reverse Shannon game.
In fact, the above example is analogous to $3\times 3$ $\Rex$.

There is a further generalization of $\Rex$ and of the reverse Shannon game, which is the \emph{antimonotone set coloring game}.
Recall that $\bool$ is the set of booleans, and that  $2^X$ is the poset formed by the subsets of some set $X$.

\begin{definition}[Antimonotone set coloring game]\label{setColoringGame}
  A \emph{set coloring game} is given by a set $X$ --- whose elements we call \emph{cells} --- and a function $f\colon 2^X\to\bool$, which is called a \emph{payoff function}.
  On a player's turn they must choose one uncolored cell.
  Left will color cells black, and Right will color cells white.
  Let $L$ be the set of black cells at the end of the game.
  Left wins if and only if $f(L)=\top$.

  A set coloring game is \emph{antimonotone} if $f$ is an antimonotone function; in other words, for all subsets $L$ and $L'$ of $X$, if $L\subseteq L'$ then $f(L')\leq f(L)$. 
\end{definition}

Every reverse Shannon game (and therefore every $\Rex$ game) is an antimonotone set coloring game: for all subsets $L$ of the cells $X$, $f(L)=\top$ if and only if there is no path from $t_N$ to $t_S$ passing only through cells in $L$.
Every reverse Shannon game is antimonotone as well.
However, not every antimonotone set coloring game can be expressed as a reverse Shannon game.
For example, the game in which $X$ is a set with 3 elements and $f(L)=\top$ if and only if ${|L|} < 2$ cannot be expressed as a reverse Shannon game: for every pair of cells, one must be connected to $t_N$ and the other must be connected to $t_S$. This cannot be accomplished without one of the three cells being connected to both $t_N$ and $t_S$, contradicting the assumption that Left wins if and only if ${|L|}<2$.

The combinatorial game theory developed in this paper applies to the class of antimonotone set coloring games.
Most of the examples will be given in terms of $\Rex$, but all of the statements remain true when considering this more general class of games.

\section{Properties of \texorpdfstring{$\Rex$}{Rex}}
We now introduce, by examples, three properties of $\Rex$ (and more generally, of antimonotone set coloring games): parity, $*$-antimonotonicity, and premotivity.
These properties will play an important role in the combinatorial game theory we develop in later chapters.

\subsection{Parity}

We say that a $\Rex$ position is \emph{even} if there is an even number of empty cells, and \emph{odd} if there is an odd number of empty cells.
The same definition applies to any set coloring game.
Notice that if a player makes a move in an even position, the resulting position is odd, and vice versa.
\subsection{\texorpdfstring{$*$}{*}-Antimonotonicity}\label{concrete_*_antimonotonicity}

If $G$ and $H$ are positions in a set coloring game, we say that $G\leq_o H$ if $o(G)\leq o(H)$,
i.e., if whenever Left has a first (respectively, second) player winning strategy in $G$,
then Left also has a first (respectively, second) player winning strategy in $H$.
We say that $G\inteq_o H$ if $G\leq_o H$ and $H\leq_o G$.

{
  
  In $\Hex$, it is well known that Left always prefers a black stone to a white stone. It is convenient for us to represent this pictographically like so:
  \[\whitestone*{} \leq_o \blackstone*{}.\]
  We call this property \emph{monotonicity}. $\Hex$ also satisfies a stronger property, which we call \emph{strong monotonicity}:
  In $\Hex$, Left prefers a black stone in a cell over the cell being empty, and Left prefers a cell to be empty rather than the cell to have a white stone (see \cite{hex_canon} for more information):
  \[\whitestone*{} \leq_o \inlinehex*{} \leq_o \blackstone*{}.\]
}

Naturally $\Rex$ is \emph{antimonotone}, i.e., in $\Rex$, Left always prefers a white stone to a black stone:
  \[\blackstone*{} \leq_o \whitestone*{}.\]

  Because $\Rex$ is the reverse of $\Hex$, it is tempting to think that $\Rex$ is also strongly \emph{anti}-monotone, i.e., 
\[\blackstone*{} \leq_o \inlinehex*{} \leq_o \whitestone*{}.\]

Perhaps surprisingly, this turns out to be false. For example, take the following game of $\Rex$:

\[G=
  \begin{hexboard}[baseline={($(current bounding box.center)-(0,1ex)$)}]
    \rotation{-30}
    \board(3,2)
    \white(1,1)\white(2,1)\black(3,1)
    \cell(1,2)\label{$x$}\white(2,2)\cell(3,2)\label{$y$}
  \end{hexboard}\]

The position $G$ has outcome class $\gn$: the player who goes first plays in cell $x$, and the other player must then play in cell $y$ and lose.
On the other hand, let $G'$ be the same as position $G$, but where $x$ is already occupied by a black or white stone, so that $y$ is the only empty cell in $G'$.
Then $G'$ has outcome class $\gp$: whoever goes first loses.
This shows that
\[\inlinehex*{$x$}\inlinehex*{$y$}\nleq_o \whitestone*{$x$}\inlinehex*{$y$}.\]
In particular, it shows that $\Rex$ is not strongly antimonotone.

While we do not have strong antimonotonicity, we \emph{do} have that \emph{two} black stones are worse for Left than \emph{two} empty cells, and two empty cells are worse for Left than two white stones, i.e.,
\begin{equation}
  \label{eq:SAM}
  \blackstone*{}\blackstone*{} \leq_o \inlinehex*{}\inlinehex*{} \leq_o \whitestone*{}\whitestone*{}.
\end{equation}

The proof of this relies on the parity of the position not changing, along with the fact that $\Rex$ is antimonotone.
We will prove that $\blackstone*{}\blackstone*{} \leq_o \inlinehex*{}\inlinehex*{}$, as the proof of $\inlinehex*{}\inlinehex*{} \leq_o \whitestone*{}\whitestone*{}$ is dual.
Assume that Left has a (first or second player) winning strategy in a $\Rex$ position with black stones in two cells, labeled $\blackstone*{$a$}$ and $\blackstone*{$b$}$.
We want to show that Left has a (first/second player respectively) winning strategy in the position including $\inlinehex*{$a$}\inlinehex*{$b$}$.
In this position, Left simply pretends both cells have black stones and follows their original strategy.
If Right ever plays in one of these two cells, Left plays in the other cell, and their strategy continues as if both cells had black stones.
If, on Left's turn, $\inlinehex*{$a$}$ and $\inlinehex*{$b$}$ are the only empty cells remaining, then Left plays in one and Right must play in the other.
At the end of the game the board will be exactly the same as if Left had started with $\blackstone*{$a$}$ and $\blackstone*{$b$}$, except one of these stones will be white instead.
Because $\Rex$ is antimonotone, Left wins anyway.


This fact implies a property of antimonotone set coloring games that is reminiscent of the strong monotonicity we see in $\Hex$.
First, in a set coloring game, we say that a cell is \emph{dead} if, no matter how you fill the board, changing the color of that cell does not affect the winner.

Dead cells, denoted by $*$, are useful in antimonotone games: playing in a dead cell is like passing your turn, forcing your opponent to move instead.
If there is an even number of dead cells in a position, they can be ignored.
To see why, take a $\Rex$ position with a cell that is not dead, which we will call \inlinehex*{$x$}, along with two dead cells, \inlinehex*{$*$} and \inlinehex*{$*$}.

By \cref{eq:SAM} we have that
\[\inlinehex*{$x$}\blackstone*{$*$}\blackstone*{$*$} \leq_o
  \inlinehex*{$x$}\inlinehex*{$*$}\inlinehex*{$*$} \leq_o
  \inlinehex*{$x$}\whitestone*{$*$}\whitestone*{$*$}.\]

The color of a filled dead cell does not matter, so we can remove the filled stones to obtain:
\[ \inlinehex*{$x$} \leq_o \inlinehex*{$x$}\inlinehex*{$*$}\inlinehex*{$*$} \leq_o \inlinehex*{$x$},\]

or in other words,
\begin{equation}
  \label{eq:2star}
  \inlinehex*{$x$} \inteq_o \inlinehex*{$x$}\inlinehex*{$*$}\inlinehex*{$*$}.
\end{equation}
While we do not have that $\blackstone*{}\leq_o \inlinehex*{} \leq_o \whitestone*{}$ in $\Rex$, we do have the following property, which we call \emph{$*$-antimonotonicity}:
\[ \blackstone*{$x$} \inlinehex*{$*$} \leq_o\inlinehex*{$x$} \leq_o\whitestone*{$x$} \inlinehex*{$*$}. \]

Informally, we are saying that the only time when Left would prefer an empty cell to a white stone is when Left wants to force Right to play somewhere else by limiting their options.


Proof of $*$-antimonotonicity: take some empty cell in a position, denoted by \inlinehex*{$x$}.
Then we have the following:
\[\blackstone*{$x$}\inlinehex*{$*$}\inteq_o
  \blackstone*{$x$}\inlinehex*{$*$}\blackstone*{$*$} \leq_o
  \inlinehex*{$x$}\inlinehex*{$*$}\inlinehex*{$*$}\inteq_o
  \inlinehex*{$x$}.\]

The first equivalence holds because filled dead cells are inconsequential.
The middle inequality holds by \cref{eq:SAM}, and the last equivalence holds by \cref{eq:2star}.
The proof of $\inlinehex*{$x$} \leq_o\whitestone*{$x$} \inlinehex*{$*$}$ is dual.


The game of $\Rex$ that we saw earlier has a dead cell as well:

\[\begin{hexboard}
    \rotation{-30}
    \board(3,2)
    \white(1,1)\white(2,1)\black(3,1)
    \cell(1,2)\label{$*$}\white(2,2)\cell(3,2)\label{$y$}
  \end{hexboard}\label{dead_cell}\]
The empty cell labeled $*$ has no way of reaching the northern black terminal, and all of the white stones around the cell are connected to each other already.
Because of this the game's outcome is unaffected by what color stone is in this cell at the end of the game, and so it is dead.

\subsection{Premotivity}\label{rex_premotivity}

The discussion in this subsection applies to all set coloring games, whether they are antimonotone or not.
\begin{lemma}[Move pre-selection lemma]\label{cornering}
  In an even position with at least two empty cells, if the first player refuses to play in some cell $\inlinehex*{$x$}$ then their opponent will be forced to play in $\inlinehex*{$x$}$ by the end of the game.
\end{lemma}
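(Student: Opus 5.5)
The statement is essentially a parity argument, so the plan is to count moves. Fix an even position with $2k$ empty cells, $k\geq 1$, and let the first player be called $P$ and the opponent $Q$. In a set coloring game every turn fills exactly one empty cell, and the game ends only when the board is full. So exactly $2k$ moves are made. Since the players alternate, $P$ makes moves $1,3,\dots,2k-1$ and $Q$ makes moves $2,4,\dots,2k$; in particular, $Q$ makes the last move.

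The main step is to show that the cell $x$ is filled by $Q$ whenever $P$ never plays in it. Suppose $P$ never plays in $x$ throughout the game. Because the game only ends when every cell is colored, $x$ is eventually filled by someone, and by assumption that someone is not $P$. Hence $Q$ colors $x$.

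For a slightly more informative version, which shows when the forcing happens, I would also check the timing. If $x$ is still empty when only one empty cell remains, that cell is $x$. It is then move $2k$, which belongs to $Q$, so $Q$ is forced to play in $x$ at that moment. This is where evenness matters: with an odd number of empty cells, the last move would belong to $P$, and $P$ could not refuse $x$ forever.

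I do not expect a genuine obstacle. The only points needing care are, first, the convention from \cref{how_to_play_Rex} that the game continues until the board is full, so no early termination can occur, and second, the hypothesis of at least two empty cells. That hypothesis guarantees $P$ actually has a turn on which refusing is meaningful, and it makes ``refusing'' possible, since on each of $P$'s turns at least two cells are empty, leaving $P$ a cell other than $x$. To confirm this, note that before move $2j-1$ there are $2k-2j+2\geq 2$ empty cells.
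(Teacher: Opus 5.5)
Your proposal is correct and follows the paper's own argument: in an even position the opponent makes the last move, and because the game ends only when the board is full, a cell the first player never touches must be filled by the opponent. Your extra checks, that refusing is always possible because at least two cells are empty on each of the first player's turns, and that the forced move falls on move $2k$, are accurate refinements that the paper leaves implicit.
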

\begin{proof}
  Without loss of generality, we consider the case where Left goes first.
  
  Suppose Left refuses to play in an arbitrary cell $\inlinehex*{$x$}$.
  Since Left went first in an even position, Right will make the last move of the game, and so Right will be forced to play in $\inlinehex*{$x$}$ at the end if they have not already.
\end{proof}

This lemma tells us a lot about the power of parity in a game of $\Rex$.
This lemma leads us to another property that set coloring games have:

\begin{proposition}[Lookahead property]\label{prop_x}
  If Left has a second player winning strategy in an even position, then Left also has a first player winning strategy.
\end{proposition}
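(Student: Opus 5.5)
The idea is a strategy-stealing argument that uses the move pre-selection lemma (\cref{cornering}). Suppose Left has a second player winning strategy $\sigma$ in an even position $G$. If $G$ has no empty cells, the game is already over, and the outcome does not depend on who is said to move first; by the convention that a player with no move simply ends the game, a first player win for Left coincides with a second player win. So assume $G$ has at least two empty cells, since the number is even and nonzero.

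Now let Left move first. She picks an arbitrary empty cell $\inlinehex*{$x$}$ and decides never to play there herself. Instead she pretends that Right has already placed a white stone in $\inlinehex*{$x$}$ and that Right moved first. She answers each real Right move with the reply $\sigma$ prescribes in the imagined game. Concretely, she simulates the play of $\sigma$ in which Right's first move was $x$, followed by Right's actual moves; these are exactly Left's own moves in the real game, shifted by one turn.

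The main obstacle is that Left moves first in the real game while $\sigma$ expects Right to move first. I would handle it by letting Left's first real move be the response $\sigma$ gives to Right's imagined move at $x$. After that the real and imagined games agree on every cell except $x$. In the real game $x$ is empty; in the imagined game it is white. It is Right's turn in both, and both have the same set of empty cells apart from $x$.

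Each later Right move in a cell other than $x$ is legal in the imagined game, and Left replies by $\sigma$. The reply is never $x$, because $x$ is occupied in the imagined game. Since the real game is even with Left moving first, Right makes the last move. By \cref{cornering}, Right is eventually forced to play in $x$, and this can only happen as the very last move of the game, because every other cell is matched in the simulation. At that point the imagined game has also ended, with the same colouring: $x$ is white and every other cell has the same colour in both games.

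The final boards are therefore identical, and $\sigma$ wins the imagined game, so Left wins the real game. Hence Left has a first player winning strategy. For Rex no antimonotonicity is needed here, because the two final boards coincide exactly. This is also why the argument works for all set coloring games, as \cref{rex_premotivity} indicates.
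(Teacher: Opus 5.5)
Your proof has a gap at the claim that Right's move in $x$ ``can only happen as the very last move of the game, because every other cell is matched in the simulation.'' That claim is false. Nothing prevents Right from playing in $x$ on any turn, for instance immediately after Left's first move. The move pre-selection lemma only guarantees that Right plays in $x$ \emph{by} the end of the game, not \emph{at} the end. In exactly this branch your strategy is undefined. The cell $x$ is already white in the imagined game, so $\sigma$ has no reply to this real move. The two games also now disagree on whose turn it is: Left is to move in the real game, Right in the imagined one.

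The missing step is to look at the position at that moment. Suppose Left has made $k$ moves and Right has made $k$ moves, the last one in $x$. Then the real board coincides with the imagined board and has $n-2k$ empty cells, so it is even, with Left to move. In the imagined game it is Right's turn on that same board, and the remainder of $\sigma$ is a second player winning strategy from there. So you are back in the hypothesis of the proposition on a strictly smaller even position. Left should pick a fresh empty cell and restart the simulation. Formally, argue by induction on the number of empty cells, with your full-board observation as the base case. This is precisely how the paper's proof closes: once Right has played in $x$, Left ``picks another arbitrary empty cell and repeats this process until the game ends.'' With this repair your argument coincides with the paper's, and your remark that no antimonotonicity is needed remains correct.
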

\begin{proof}
  We note that, when analyzing a position with no empty cells, Left has a second player winning strategy if and only if Left has a first player winning strategy.

  Suppose that Left has a second player winning strategy in an even game.
  Then Left can win as first player by doing the following.
  First, Left picks an arbitrary empty cell $\inlinehex*{$x$}$ on the board and pretends that Right has played there.
  Left then plays what their second player winning strategy suggests in response to this imagined move by Right.
  Of course, Left's strategy never requires her to play in $\inlinehex*{$x$}$ because it assumes Right has already played there.
  By \cref{cornering} she forces Right to play to $\whitestone*{$x$}$ at some point in the game.
  
  Once Right has played $\whitestone*{$x$}$, the resulting even position will be as if Left had in fact gone second with Right playing $\whitestone*{$x$}$ first. If the new position has no empty cells, then she has won via their second player winning strategy. If this new position has empty cells remaining, then Left picks another arbitrary empty cell and repeats this process until the game ends.\end{proof}

Many of the results in this thesis will require that our games satisfy \cref{prop_x}.
Recall that, in combinatorial game theory, there is a well known concept called \emph{the copycat strategy}.
Here is a story that illustrates how the copycat strategy works.
Suppose Alice is playing two games of $\Rex$ simultaneously, one against Bob and one against Charlie.
Both games are played starting from identical positions.
In her game against Bob she is playing the black stones, and in her game against Charlie she plays the white stones.
Alice waits for Charlie to make his first move.
After this, because both positions are identical, Alice can copy the move Charlie made onto Bob's board.
This means that the new positions are identical after Alice copies this move.
Whenever Bob makes a move Alice copies it in Charlie's game, and whenever Charlie makes a move Alice copies it in Bob's game.
By following this strategy Alice can force the two boards to be identical at the end of play.
Then Bob wins if and only if Charlie loses, so Alice must have won at least one of the games.

Note that Alice's copycat strategy does not require Bob and Charlie to be uncoordinated.
The copycat strategy works even if Bob and Charlie play as a team --- which we will refer to as \emph{Charboblie} --- to defeat Alice.
Therefore, we can think of Alice as playing a single set coloring game against Charboblie.

More precisely, let us write $G$ to refer to the set coloring game Bob plays against Alice,
and $\op{G}$ to refer to the set coloring game that Charlie plays against Alice
(which is the same as $G$ except the colors have been switched).
Let $\op{G}+G$ be the set coloring game played on the union of the two boards, in which Alice wins if and only if she can win in $G$ or $\op{G}$ by the end of the game.

Alice's second player copycat strategy still allows her to win in the game $\op{G}+G$.
The games $G$ and $\op{G}$ each have $n$ empty cells, so $\op{G}+G$ must have $2n$ empty cells, meaning the sum is an even position.
By using \cref{prop_x}, we see that Alice has a \emph{first} player copycat strategy as well!

First player copycat strategies are very uncommon in combinatorial games. It is remarkable that all set coloring games have this property. We say that a game $G$ is \emph{premotive} if Left has a first player winning strategy in the game $\op{G}+G$. We will give a more formal definition of premotivity in \cref{premotive}.

\chapter{Games over posets}\label{Games_over_posets}
\section{Evaluating completions of a region}
Almost all definitions in combinatorial game theory are recursive.
This means that, to define what game forms for some region of a $\Rex$ board should look like, we have to start with the base cases: all the ways to completely fill said region.

\begin{figure}[ht]
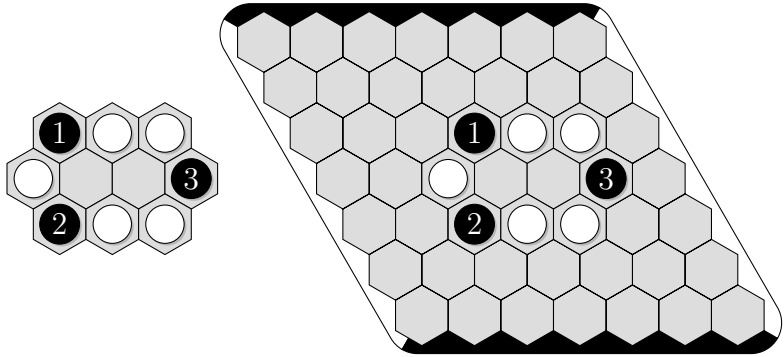

  \centering
  $\begin{hexboard}[baseline=(current bounding box.center)]
    \rotation{-30}
    \hex(4,3)\black(4,3)\label{$1$}\hex(5,3)\white(5,3)\hex(6,3)\white(6,3)
    \hex(3,4)\white(3,4)\hex(4,4)\hex(5,4)\hex(6,4)\black(6,4)\label{$3$}
    \hex(3,5)\black(3,5)\label{$2$}\hex(4,5)\white(4,5)\hex(5,5)\white(5,5)
  \end{hexboard}
  \begin{hexboard}[baseline=(current bounding box.center)]
    \rotation{-30}
    \board(7,7)
    \hex(4,3)\black(4,3)\label{$1$}\hex(5,3)\white(5,3)\hex(6,3)\white(6,3)
    \hex(3,4)\white(3,4)\hex(4,4)\hex(5,4)\hex(6,4)\black(6,4)\label{$3$}
    \hex(3,5)\black(3,5)\label{$2$}\hex(4,5)\white(4,5)\hex(5,5)\white(5,5)    
  \end{hexboard}$
  \caption{A small region embedded in a larger $\Rex$ board.}
  \label{fig:region}
\end{figure}

Take the region of the board shown in \cref{fig:region}.
It has 2 empty cells, meaning there are $2^2=4$ ways of filling the region with black or white stones.
We call these the \emph{completions} of the region.
Looking at \cref{fig:completions}, we see that $\blackstone*{$x$}\blackstone*{$y$}$ is worse for black than $\whitestone*{$x$}\whitestone*{$y$}$: in $\blackstone*{$x$}\blackstone*{$y$}$ the bordering black cells \blackstone*{$1$}, \blackstone*{$2$}, and \blackstone*{$3$} are connected, while in $\whitestone*{$x$}\whitestone*{$y$}$ they are disconnected.
Say we were to embed the region $R$ into any large board $X$ that is already completely filled with black and white stones.
If Left wins in the game $X(R)$ with the completion $\blackstone*{$x$}\blackstone*{$y$}$, then there must not be a path going from the positive terminal to the negative terminal.
When changing both black stones to white stones, this fact remains unchanged.
This is the inspiration for the following order on completions of a region:

\begin{figure}
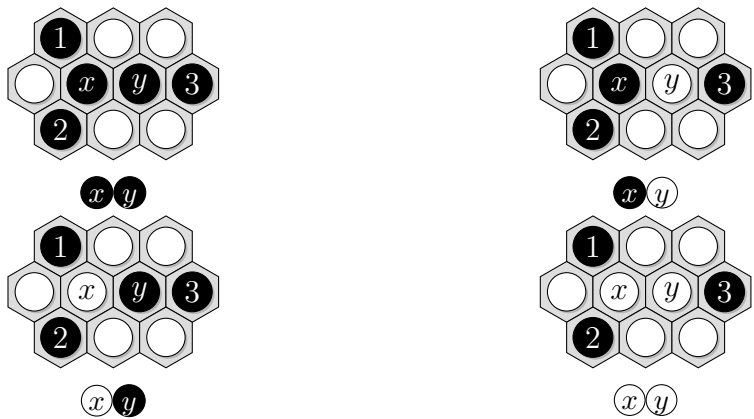

  \begin{subfigure}[t]{0.50\textwidth}
    \centering
    $\begin{hexboard}
      \rotation{-30}
      \hex(4,3)\black(4,3)\label{$1$}\hex(5,3)\white(5,3)\hex(6,3)\white(6,3)
      \hex(3,4)\white(3,4)\hex(4,4)\black(4,4)\label{$x$}\hex(5,4)\black(5,4)\label{$y$}\hex(6,4)\black(6,4)\label{$3$}
      \hex(3,5)\black(3,5)\label{$2$}\hex(4,5)\white(4,5)\hex(5,5)\white(5,5)
    \end{hexboard}$
    \caption*{$\blackstone*{$x$}\blackstone*{$y$}$}
  \end{subfigure}
  \hfill
  \begin{subfigure}[t]{0.50\textwidth}
    \centering
    $\begin{hexboard}
      \rotation{-30}
      \hex(4,3)\black(4,3)\label{$1$}\hex(5,3)\white(5,3)\hex(6,3)\white(6,3)     \hex(3,4)\white(3,4)\hex(4,4)\black(4,4)\label{$x$}\hex(5,4)\white(5,4)\label{$y$}\hex(6,4)\black(6,4)\label{$3$}
      \hex(3,5)\black(3,5)\label{$2$}\hex(4,5)\white(4,5)\hex(5,5)\white(5,5)
    \end{hexboard}$
    \caption*{$\blackstone*{$x$}\whitestone*{$y$}$}
  \end{subfigure}
  \hfill
  \begin{subfigure}[t]{0.50\textwidth}
    \centering
    $\begin{hexboard}
      \rotation{-30}
      \hex(4,3)\black(4,3)\label{$1$}\hex(5,3)\white(5,3)\hex(6,3)\white(6,3)
      \hex(3,4)\white(3,4)\hex(4,4)\white(4,4)\label{$x$}\hex(5,4)\black(5,4)\label{$y$}\hex(6,4)\black(6,4)\label{$3$}
      \hex(3,5)\black(3,5)\label{$2$}\hex(4,5)\white(4,5)\hex(5,5)\white(5,5)
    \end{hexboard}$
    \caption*{$\whitestone*{$x$}\blackstone*{$y$}$}
  \end{subfigure}
  \hfill
  \begin{subfigure}[t]{0.50\textwidth}
    \centering
    $\begin{hexboard}
      \rotation{-30}
      \hex(4,3)\black(4,3)\label{$1$}\hex(5,3)\white(5,3)\hex(6,3)\white(6,3)      \hex(3,4)\white(3,4)\hex(4,4)\white(4,4)\label{$x$}\hex(5,4)\white(5,4)\label{$y$}\hex(6,4)\black(6,4)\label{$3$}
      \hex(3,5)\black(3,5)\label{$2$}\hex(4,5)\white(4,5)\hex(5,5)\white(5,5)
    \end{hexboard}$
    \caption*{$\whitestone*{$x$}\whitestone*{$y$}$}
  \end{subfigure}

  \caption[Completions of a region with two cells.]{The four completions of the region in \cref{fig:region}.}
  \label{fig:completions}
\end{figure}

\begin{definition}[Order on completions of a region]
  Let $R$ be a region of a $\Rex$ board, and let $a$ and $b$ be two completions of $R$.
  We say that $a\leq b$ as completions of $R$ if, for all larger boards $X$ to embed $R$ into and for all completions $x$ of $X$, if Left wins in $x(a)$, then Left wins in $x(b)$. We say that $a\inteq b$, or $a$ \emph{is equivalent to} $b$, when $a\leq b$ and $b\leq a$.
\end{definition}

The order on completions of a region is a preorder, i.e., it is transitive and reflexive but it is not always anti-symmetric (for example, $\blackstone*{$x$}\whitestone*{$y$} \inteq \whitestone*{$x$}\whitestone*{$y$}$ in \cref{fig:completions}).
To make this into a poset, we will look at the completions up to equivalence.
We call the poset formed by the completions modulo equivalence the \emph{outcome poset}.

Let us finish comparing the completions of the region from \cref{fig:region}.
The completion $\blackstone*{$x$}\whitestone*{$y$}$ connects \blackstone*{$1$} and \blackstone*{$2$} while $\whitestone*{$x$}\blackstone*{$y$}$ does not.
Because of this, if $R$ is plugged in to a larger board $X$ with a completion $x$ of $X$ such that \blackstone*{$1$} is connected to the northern terminal and \blackstone*{$2$} is connected to southern terminal, as in \cref{fig:01comp10}, then Left wins in the game $x(\whitestone*{$x$}\blackstone*{$y$})$ and loses in the game $x(\blackstone*{$x$}\whitestone*{$y$})$.
This means that $\whitestone*{$x$}\blackstone*{$y$}\nleq \blackstone*{$x$}\whitestone*{$y$}$.
With similar reasoning it can be shown that $\blackstone*{$x$}\whitestone*{$y$}$ is equivalent to $\whitestone*{$x$}\whitestone*{$y$}$. Thus, there are only three outcomes of $R$ up to equivalence, and they form the poset $\blackstone*{$x$}\blackstone*{$y$}\leq\blackstone*{$x$}\whitestone*{$y$}\leq\whitestone*{$x$}\whitestone*{$y$}$.

\begin{figure}[ht]
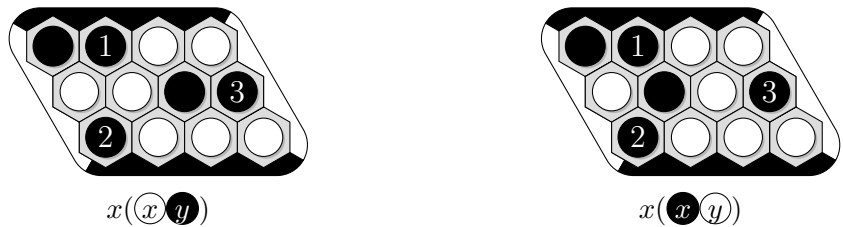

  \begin{subfigure}{0.5\textwidth}
  \centering
    $\begin{hexboard}
      \rotation{-30}
      \board(4,3) \black(1,1) \white (4,3)
      \hex(2,1)\black(2,1)\label{$1$}\hex(3,1)\white(3,1)\hex(4,1)\white(4,1)
      \hex(1,2)\white(1,2)\hex(2,2)\white(2,2)\hex(3,2)\black(3,2)\hex(4,2)\black(4,2)\label{$3$}
      \hex(1,3)\black(1,3)\label{$2$}\hex(2,3)\white(2,3)\hex(3,3)\white(3,3)
    \end{hexboard}$
    \caption*{$x(\whitestone*{$x$}\blackstone*{$y$})$}
  \end{subfigure}
  \hfill
  \begin{subfigure}{0.5\textwidth}
  \centering
    $\begin{hexboard}
      \rotation{-30}
      \board(4,3) \black(1,1) \white (4,3)
      \hex(2,1)\black(2,1)\label{$1$}\hex(3,1)\white(3,1)\hex(4,1)\white(4,1)
      \hex(1,2)\white(1,2)\hex(2,2)\black(2,2)\hex(3,2)\white(3,2)\hex(4,2)\black(4,2)\label{$3$}
      \hex(1,3)\black(1,3)\label{$2$}\hex(2,3)\white(2,3)\hex(3,3)\white(3,3)
    \end{hexboard}$
    \caption*{$x(\blackstone*{$x$}\whitestone*{$y$})$}
  \end{subfigure}
  
  \caption{Left wins with \whitestone*{$x$}\blackstone*{$y$} and loses with \blackstone*{$x$}\whitestone*{$y$}.}
  \label{fig:01comp10}
\end{figure}

\begin{figure}[ht]
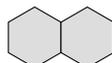

  \centering

  \begin{hexboard}
    \rotation{-30}
    \hex(0,0)\hex(1,0)
  \end{hexboard}
  
  \caption{Two adjacent empty cells.}
  \label{fig:two_empty_cells}
\end{figure}

Not all regions are bounded by stones like the region in \cref{fig:region}.
For example, \cref{fig:two_empty_cells}  shows a region of a Reverse Hex board with two empty cells and no stones along its border.
Again there are $2^2=4$ ways this region could be filled.
However, $\whitestone*{$x$}\blackstone*{$y$}$ and $\whitestone*{$x$}\whitestone*{$y$}$ are \emph{not} equivalent in this region, so the outcome poset is the four element poset shown in \cref{fig:two_empty_poset}.
\begin{figure}[ht]
  \centering
  \[
    \begin{tikzpicture}[scale=0.5]
      \node [label=$\top$] (top) at (0,1) {$\begin{hexboard}[scale=0.7]\rotation{-30}\hex(0,0)\white(0,0)\hex(1,0)\white(1,0)\end{hexboard}$};
      \node [label=0:$x$] (x) at (3,0) {$\begin{hexboard}[scale=0.5]\rotation{-30}\hex(0,0)\white(0,0)\hex(1,0)\black(1,0)\end{hexboard}$};
      \node [label=180:$y$] (y) at (-3,0) {$\begin{hexboard}[scale=0.5]\rotation{-30}\hex(0,0)\black(0,0)\hex(1,0)\white(1,0)\end{hexboard}$};
      \node [label=below:$\bot$] (bot) at (0,-1) {$\begin{hexboard}[scale=0.5]\rotation{-30}\hex(0,0)\black(0,0)\hex(1,0)\black(1,0)\end{hexboard}$};
      \path (top) edge [-] (x);
      \path (top) edge [-] (y);
      \path (x) edge [-] (bot);
      \path (y) edge [-] (bot);
    \end{tikzpicture}
  \]  
  \caption[Poset formed by two empty cells.]{The poset formed by two empty cells, ordered by preference.}
  \label{fig:two_empty_poset}
\end{figure}
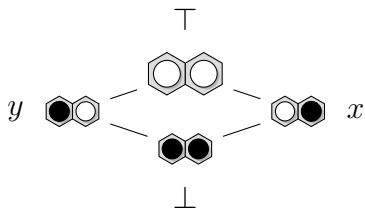

As mentioned on the previous page, given a region of a $\Rex$ board, the poset formed by its completions modulo equivalence is called the \emph{outcome poset} of the region.
For a game $G$ with a outcome poset $A$, we say that $G$ is played over $A$.

As discussed in the section about posets, we will use $\top$ to refer to the best outcome for Left and $\bot$ to refer to the worst outcome for Left.
Note that filling every empty cell with white stones is certainly the best completion for Left, and filling every empty cell with black stones is certainly the worst for Left.

The simplest example of an outcome poset is that of the region with one empty cell and no boundary conditions.
The two completions of this region naturally form the boolean poset $\bool$ (see \cref{fig:bool_poset}). 

\begin{figure}
  \[
    \begin{tikzpicture}
      \node [label=$\top$] (top) at (0,1) {$\begin{hexboard}\rotation{-30}\hex(0,0)\white(0,0)\end{hexboard}$};
      \node [label=below:$\bot$] (bot) at (0,-1) {$\begin{hexboard}\rotation{-30}\hex(0,0)\black(0,0)\end{hexboard}$};
      \path (top) edge [-] (bot);
    \end{tikzpicture}
  \]
  \caption{The boolean poset $\mathbb{B}$.}
  \label{fig:bool_poset}
\end{figure}
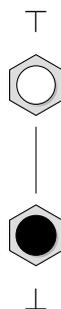

We now have the tools necessary to find the outcome poset of the region $G_2$ from \cref{fig:split_big_rex}:
\[\begin{hexboard}[scale=0.80, baseline=(current bounding box.center)]
    \rotation{-30}
    \foreach \i in {5,...,8}{ \hex(\i,1) }
    \white(5,1)\white(6,1)\white(7,1)\white(8,1)
    \foreach \i in {5,...,8}{ \hex(\i,2) }
    \black(5,2)\black(6,2)\white(7,2)\black(8,2)
    \foreach \i in {5,...,8}{ \hex(\i,3) }
    \white(5,3)\white(8,3)
    \foreach \i in {4,...,7}{ \hex(\i,4) }
    \white(4,4)\white(5,4)\white(7,4)
    \foreach \i in {3,...,6}{ \hex(\i,5) }
    \white(3,5)\white(4,5)\black(5,5)\black(6,5)
    \edge[\se\noobtusecorner\noacutecorner](3,5)(6,5)
    \node at \coord(4,6.5) {$G_2$};
  \end{hexboard} \]
This is an example of a 3-terminal region, i.e., a region with three black terminals and three white terminals.
There are at most five completions of a 3-terminal region up to equivalence.
The outcome $\top$ represents no two black terminals being connected, and the outcome $\bot$ represents when all three black terminals are connected by black stones.
The other outcomes arise from the three ways Left can connect two of the three terminals together. The outcome poset of $G_2$ is shown in \cref{fig:poset_G2}, along with an example of each outcome.

\begin{figure}
  \centering
  \begin{tikzpicture}[baseline=(current bounding box.center)]
    \node [label=$\top$] (top) at (0,2) {$\begin{hexboard}[scale=0.40]
        \rotation{-30}
        \foreach \i in {5,...,8}{ \hex(\i,1) }
        \white(5,1)\white(6,1)\white(7,1)\white(8,1)
        \foreach \i in {5,...,8}{ \hex(\i,2) }
        \black(5,2)\black(6,2)\white(7,2)\black(8,2)
        \foreach \i in {5,...,8}{ \hex(\i,3) }
        \white(5,3)\white(8,3)   \white(6,3)\white(7,3)
        \foreach \i in {4,...,7}{ \hex(\i,4) }
        \white(4,4)\white(5,4)\white(7,4)   \black(6,4)
        \foreach \i in {3,...,6}{ \hex(\i,5) }
        \white(3,5)\white(4,5)\black(5,5)\black(6,5)
        \edge[\se\noobtusecorner\noacutecorner](3,5)(6,5)
      \end{hexboard}$};
    \node [label=180:$a$] (a) at (-2.5,0) {$\begin{hexboard}[scale=0.40]
        \rotation{-30}
        \foreach \i in {5,...,8}{ \hex(\i,1) }
        \white(5,1)\white(6,1)\white(7,1)\white(8,1)
        \foreach \i in {5,...,8}{ \hex(\i,2) }
        \black(5,2)\black(6,2)\white(7,2)\black(8,2)
        \foreach \i in {5,...,8}{ \hex(\i,3) }
        \white(5,3)\white(8,3)   \black(6,3)\white(7,3)
        \foreach \i in {4,...,7}{ \hex(\i,4) }
        \white(4,4)\white(5,4)\white(7,4)   \black(6,4)
        \foreach \i in {3,...,6}{ \hex(\i,5) }
        \white(3,5)\white(4,5)\black(5,5)\black(6,5)
        \edge[\se\noobtusecorner\noacutecorner](3,5)(6,5)
      \end{hexboard}$};

    \node [label=0:$b$] (b) at (0,0) {$\begin{hexboard}[scale=0.40]
        \rotation{-30}
        \foreach \i in {5,...,8}{ \hex(\i,1) }
        \white(5,1)\white(6,1)\white(7,1)\white(8,1)
        \foreach \i in {5,...,8}{ \hex(\i,2) }
        \black(5,2)\black(6,2)\white(7,2)\black(8,2)
        \foreach \i in {5,...,8}{ \hex(\i,3) }
        \white(5,3)\white(8,3)   \black(6,3)\black(7,3)
        \foreach \i in {4,...,7}{ \hex(\i,4) }
        \white(4,4)\white(5,4)\white(7,4)   \white(6,4)
        \foreach \i in {3,...,6}{ \hex(\i,5) }
        \white(3,5)\white(4,5)\black(5,5)\black(6,5)
        \edge[\se\noobtusecorner\noacutecorner](3,5)(6,5)
      \end{hexboard}$};

    \node [label=0:$c$] (c) at (2.5,0) {$\begin{hexboard}[scale=0.40]
        \rotation{-30}
        \foreach \i in {5,...,8}{ \hex(\i,1) }
        \white(5,1)\white(6,1)\white(7,1)\white(8,1)
        \foreach \i in {5,...,8}{ \hex(\i,2) }
        \black(5,2)\black(6,2)\white(7,2)\black(8,2)
        \foreach \i in {5,...,8}{ \hex(\i,3) }
        \white(5,3)\white(8,3)   \white(6,3)\black(7,3)
        \foreach \i in {4,...,7}{ \hex(\i,4) }
        \white(4,4)\white(5,4)\white(7,4)   \black(6,4)
        \foreach \i in {3,...,6}{ \hex(\i,5) }
        \white(3,5)\white(4,5)\black(5,5)\black(6,5)
        \edge[\se\noobtusecorner\noacutecorner](3,5)(6,5)
      \end{hexboard}$};

    \node [label=below:$\bot$] (bot) at (0,-2) {$\begin{hexboard}[scale=0.40]
        \noshadows
        \rotation{-30}
        \foreach \i in {5,...,8}{ \hex(\i,1) }
        \white(5,1)\white(6,1)\white(7,1)\white(8,1)
        \foreach \i in {5,...,8}{ \hex(\i,2) }
        \black(5,2)\black(6,2)\white(7,2)\black(8,2)
        \foreach \i in {5,...,8}{ \hex(\i,3) }
        \white(5,3)\white(8,3)   \black(6,3)\black(7,3)
        \foreach \i in {4,...,7}{ \hex(\i,4) }
        \white(4,4)\white(5,4)\white(7,4)   \black(6,4)
        \foreach \i in {3,...,6}{ \hex(\i,5) }
        \white(3,5)\white(4,5)\black(5,5)\black(6,5)
        \edge[\se\noobtusecorner\noacutecorner](3,5)(6,5)
      \end{hexboard}$};
    \path (top) edge [-] (a)
    edge [-] (b)
    edge [-] (c);
    \path (bot) edge [-] (a)
    edge [-] (b)
    edge [-] (c);
  \end{tikzpicture}
  \caption[Another three terminal region.]{The outcome poset of a 3-terminal region}
  \label{fig:poset_G2}
\end{figure}
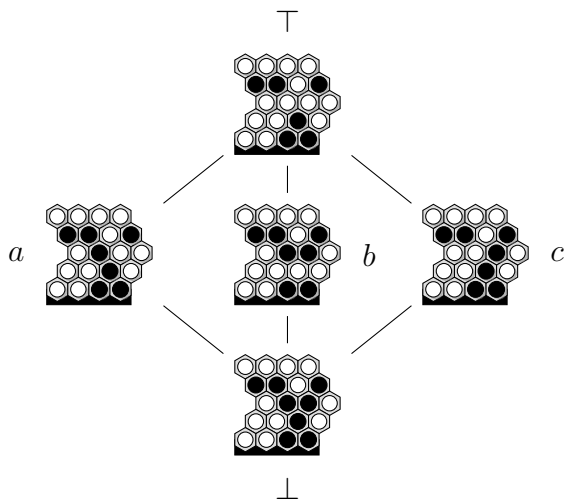
\section{Positions and their game forms}\label{sec:game_forms}

Now that we have an understanding of the possible completions of a region at the end of play, we need a treatment of a region in the midst of play.
Given a region, a \emph{position} of the region is any way one can assign the available cells to have a black stone, a white stone, or to be empty.
With three possible states for each available cell in a region, that means a region with $n$ available cells has $3^n$ positions.
For example, the nine positions in the region from \cref{fig:region} are shown in \cref{fig:positions}. All positions also come with a \emph{game form}, a description of what moves either player can make starting from that position. 
A position is called \emph{atomic} if it contains no empty cells, i.e., if it is a completion for the region.
\begin{definition}[Game form of a position]\label{def:game_form}
  Let $R$ be a region, and let $P$ be some position in the region $R$.
  If $P$ is not atomic, then the \emph{game form} $G=\gc{L}{R}$ of $P$ is a pair of sets $L$ and $R$, which are the sets of the game forms of the positions that Left/Right respectively can reach in one move from position $P$.
  If the position $P$ \emph{is} atomic, then the game form $G$ of $P$ is just the element of the outcome poset of $R$ which corresponds to that completion.
\end{definition}

\begin{figure}
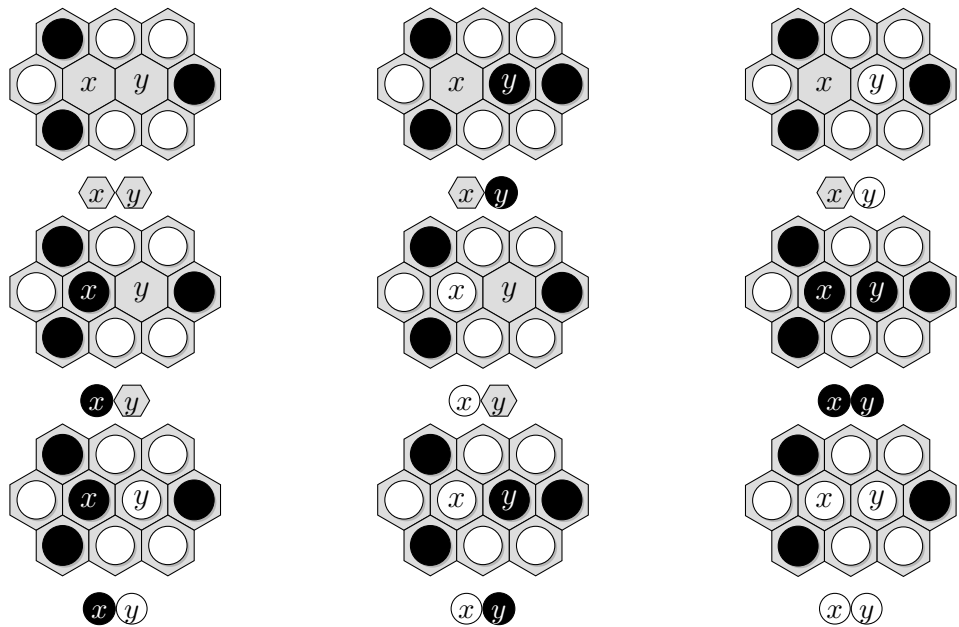

  \begin{subfigure}{0.3\textwidth}
    \centering
    $\begin{hexboard}
      \rotation{-30}
      \hex(4,3)\black(4,3)\hex(5,3)\white(5,3)\hex(6,3)\white(6,3)
      \hex(3,4)\white(3,4)\hex(4,4)\label{$x$}\hex(5,4)\label{$y$}\hex(6,4)\black(6,4)
      \hex(3,5)\black(3,5)\hex(4,5)\white(4,5)\hex(5,5)\white(5,5)
    \end{hexboard}$
    \caption*{\inlinehex*{$x$}\inlinehex*{$y$}}
  \end{subfigure}
  \hfill
  \begin{subfigure}{0.3\textwidth}
    \centering
    $\begin{hexboard}
      \rotation{-30}
      \hex(4,3)\black(4,3)\hex(5,3)\white(5,3)\hex(6,3)\white(6,3)
      \hex(3,4)\white(3,4)\hex(4,4)\label{$x$}\hex(5,4)\black(5,4)\label{$y$}\hex(6,4)\black(6,4)
      \hex(3,5)\black(3,5)\hex(4,5)\white(4,5)\hex(5,5)\white(5,5)
    \end{hexboard}$
    \caption*{\inlinehex*{$x$}\blackstone*{$y$}}
  \end{subfigure}
  \hfill
  \begin{subfigure}{0.3\textwidth}
    \centering
    $\begin{hexboard}
      \rotation{-30}
      \hex(4,3)\black(4,3)\hex(5,3)\white(5,3)\hex(6,3)\white(6,3)
      \hex(3,4)\white(3,4)\hex(4,4)\label{$x$}\hex(5,4)\white(5,4)\label{$y$}\hex(6,4)\black(6,4)
      \hex(3,5)\black(3,5)\hex(4,5)\white(4,5)\hex(5,5)\white(5,5)
    \end{hexboard}$
    \caption*{\inlinehex*{$x$}\whitestone*{$y$}}
  \end{subfigure}
  \hfill
  \begin{subfigure}{0.3\textwidth}
    \centering
    $\begin{hexboard}
      \rotation{-30}
      \hex(4,3)\black(4,3)\hex(5,3)\white(5,3)\hex(6,3)\white(6,3)
      \hex(3,4)\white(3,4)\hex(4,4)\black(4,4)\label{$x$}\hex(5,4)\label{$y$}\hex(6,4)\black(6,4)
      \hex(3,5)\black(3,5)\hex(4,5)\white(4,5)\hex(5,5)\white(5,5)
    \end{hexboard}$
    \caption*{\blackstone*{$x$}\inlinehex*{$y$}}
  \end{subfigure}
  \hfill
  \begin{subfigure}{0.3\textwidth}
    \centering
    $\begin{hexboard}
      \rotation{-30}
      \hex(4,3)\black(4,3)\hex(5,3)\white(5,3)\hex(6,3)\white(6,3)
      \hex(3,4)\white(3,4)\hex(4,4)\white(4,4)\label{$x$}\hex(5,4)\label{$y$}\hex(6,4)\black(6,4)
      \hex(3,5)\black(3,5)\hex(4,5)\white(4,5)\hex(5,5)\white(5,5)
    \end{hexboard}$
    \caption*{\whitestone*{$x$}\inlinehex*{$y$}}
  \end{subfigure}
  \hfill
  \begin{subfigure}{0.3\textwidth}
    \centering
    $\begin{hexboard}
      \rotation{-30}
      \hex(4,3)\black(4,3)\hex(5,3)\white(5,3)\hex(6,3)\white(6,3)
      \hex(3,4)\white(3,4)\hex(4,4)\black(4,4)\label{$x$}\hex(5,4)\black(5,4)\label{$y$}\hex(6,4)\black(6,4)
      \hex(3,5)\black(3,5)\hex(4,5)\white(4,5)\hex(5,5)\white(5,5)
    \end{hexboard}$
    \caption*{\blackstone*{$x$}\blackstone*{$y$}}
  \end{subfigure}
  \hfill
  \begin{subfigure}{0.3\textwidth}
    \centering
    $\begin{hexboard}
      \rotation{-30}
      \hex(4,3)\black(4,3)\hex(5,3)\white(5,3)\hex(6,3)\white(6,3)
      \hex(3,4)\white(3,4)\hex(4,4)\black(4,4)\label{$x$}\hex(5,4)\white(5,4)\label{$y$}\hex(6,4)\black(6,4)
      \hex(3,5)\black(3,5)\hex(4,5)\white(4,5)\hex(5,5)\white(5,5)
    \end{hexboard}$
    \caption*{\blackstone*{$x$}\whitestone*{$y$}}
  \end{subfigure}
  \hfill
  \begin{subfigure}{0.3\textwidth}
    \centering
    $\begin{hexboard}
      \rotation{-30}
      \hex(4,3)\black(4,3)\hex(5,3)\white(5,3)\hex(6,3)\white(6,3)
      \hex(3,4)\white(3,4)\hex(4,4)\white(4,4)\label{$x$}\hex(5,4)\black(5,4)\label{$y$}\hex(6,4)\black(6,4)
      \hex(3,5)\black(3,5)\hex(4,5)\white(4,5)\hex(5,5)\white(5,5)
    \end{hexboard}$
    \caption*{\whitestone*{$x$}\blackstone*{$y$}}
  \end{subfigure}
  \hfill
  \begin{subfigure}{0.3\textwidth}
    \centering
    $\begin{hexboard}
      \rotation{-30}
      \hex(4,3)\black(4,3)\hex(5,3)\white(5,3)\hex(6,3)\white(6,3)
      \hex(3,4)\white(3,4)\hex(4,4)\white(4,4)\label{$x$}\hex(5,4)\white(5,4)\label{$y$}\hex(6,4)\black(6,4)
      \hex(3,5)\black(3,5)\hex(4,5)\white(4,5)\hex(5,5)\white(5,5)
    \end{hexboard}$
    \caption*{\whitestone*{$x$}\whitestone*{$y$}}
  \end{subfigure}
  \caption[Positions of the region in \cref{fig:region}.]{The nine positions of the region in \cref{fig:region}.}
  \label{fig:positions}
\end{figure}

This recursive definition is similar to the definition for normal play games.
As an example, consider the region $R$ from \cref{fig:region}.
Let us write $\inlinehex*{$x$}\inlinehex*{$y$}$ when both cells are empty, and so on.

From the position \inlinehex*{$x$}\inlinehex*{$y$} Left could move to \inlinehex*{$x$}\blackstone*{$y$} or \blackstone*{$x$}\inlinehex*{$y$} and Right could move to \whitestone*{$x$}\inlinehex*{$y$} or \inlinehex*{$x$}\whitestone*{$y$}.
The game form of \inlinehex*{$x$}\blackstone*{$y$} is $\gc{\blackstone*{$x$}\blackstone*{$y$}}{\whitestone*{$x$}\blackstone*{$y$}}$ because $\blackstone*{$x$}\blackstone*{$y$}$ and $\whitestone*{$x$}\blackstone*{$y$}$ are both completions of $R$.
The game form of \blackstone*{$x$}\inlinehex*{$y$} is $\gc{\blackstone*{$x$}\blackstone*{$y$}}{\blackstone*{$x$}\whitestone*{$y$}}$.
For \whitestone*{$x$}\inlinehex*{$y$} and \inlinehex*{$x$}\whitestone*{$y$} the game forms are $\gc{\whitestone*{$x$}\blackstone*{$y$}}{\whitestone*{$x$}\whitestone*{$y$}}$ and $\gc{\blackstone*{$x$}\whitestone*{$y$}}{\whitestone*{$x$}\whitestone*{$y$}}$ respectively.
Then the game form of the position \inlinehex*{$x$}\inlinehex*{$y$} is \[\gc{\strut\gc{\blackstone*{$x$}\blackstone*{$y$}}{\whitestone*{$x$}\blackstone*{$y$}}, \,\gc{\blackstone*{$x$}\blackstone*{$y$}}{\blackstone*{$x$}\whitestone*{$y$}} }{\gc{\whitestone*{$x$}\blackstone*{$y$}}{\whitestone*{$x$}\whitestone*{$y$}},\, \gc{\blackstone*{$x$}\whitestone*{$y$}}{\whitestone*{$x$}\whitestone*{$y$}}}.\]

\section{Games over posets}

For the rest of this thesis we will focus on game forms over arbitrary posets.
We define the class of game forms over some poset as follows:
\begin{definition}[Game forms over a poset]\label{def:games_posets}
  Fix a poset $A$, called the \emph{outcome poset}. The class of \emph{game forms over $A$} is the smallest class satisfying:
  \begin{enumerate}
  \item for all $a\in A$, $[a]$ is a game form.
  \item Whenever $L$ and $R$ are non-empty sets of game forms, then $\gc{L}{R}$ is a game form.
  \end{enumerate}
\end{definition}

We will use the usual terminology and notations of combinatorial game theory.
When no confusion arises, we simply refer to a game form as a \emph{game}.
A game of the form $[a]$ is called an \emph{atomic game}.
We often just write $a$ instead of $[a]$: the notation $[a]$ is used for operations on games that translate to operations on the poset.
A game of the form $\gc{L}{R}$ is called a \emph{composite game}, and the elements of $L$ and $R$ are called its \emph{left} and \emph{right} options, respectively.
We will often write $G=\gc{G^L}{G^R}$ to represent a game, where $G^L$ is a typical left option of $G$ and $G^R$ is a typical right option of $G$.
Then we use ``for all $G^L$'' to mean ``for all left options $G^L$ of $G$,'' and similarly for right options.
When a game is known to be atomic, we often write it in lowercase.
We sometimes use $G:A$ to denote that $G$ is a game over the poset $A$.

For example, let $\bot\leq h\leq\top$ be a poset.
This poset is isomorphic to the outcome poset of the region from \cref{fig:region}, where $\blackstone*{$x$}\blackstone*{$y$}$ maps to $\bot$, $\blackstone*{$x$}\whitestone*{$y$}$ maps to $h$, and $\whitestone*{$x$}\blackstone*{$y$}$ and $\whitestone*{$x$}\whitestone*{$y$}$ both map to $\top$.
Then
\[\gc{\gc{\bot}{\top}, \,\gc{\bot}{h} }{\gc{\top}{\top},\, \gc{h}{\top}}\] is another representation of the game form of the empty position in this region.

There are game forms that \emph{cannot} be realized as a position of some $\Rex$ region.
For example, let $G$ be the game form $G=\gc{\top}{\bot}$ over some poset $A$.
If we wanted to find a $\Rex$ position with this game form, we would need a position with one empty cell remaining such that filling the cell with a black stone is winning for Left and filling the cell with a white stone is winning for Right. This is impossible because $\Rex$ is antimonotone (see \cref{setColoringGame}).

\begin{definition}[Sums of games]\label{sum_on_game}
  Let $G$ be a game over the poset $A$, and let $H$ be a game over the poset $B$.
  The game $G+H$ is over the poset $A\times B$ and is defined as follows:
  \begin{itemize}
  \item If $[g]$ and $[h]$ are atomic, then $[g]+[h]=[(g,h)]$.
  \item If $G$ is composite and $h$ is atomic, then $G+h=\gc{G^L+h}{G^R+h}$.
  \item If $g$ is atomic and $H$ is composite, then $g+H=\gc{g+H^L}{g+H^R}$.
  \item Otherwise, $G+H=\gc{G^L+H,\,G+H^L}{G^R+H,\,G+H^R}$.
  \end{itemize}
\end{definition}
Note that, when both games are composite, this definition agrees with the definition of the disjunctive sum on normal play games.
This sum is also associative: $(G+(H+K))=((G+H)+K)$.

\begin{definition}[Map operation] \label{map_on_game}
  Let $\phi\colon A\to B$ be a monotone function and let $G$ be a game over $A$.
  Then $\phi(G)$ is a game over $B$, defined as follows:
  \begin{itemize}
  \item If $[g]$ is atomic, then $\phi([g])=[\phi(g)]$
  \item If $G$ is composite, then $\phi(G)=\gc{\phi(G^L)}{\phi(G^R)}$.
  \end{itemize}
\end{definition}

Let $f\colon A\times B\to C$ be a monotone function where $A$, $B$, and $C$ are posets. Often in this thesis we will apply $f$ to the sum $G+H$, with $G$ and $H$ being games over $A$ and $B$, respectively. We write $G+_f H$ as shorthand for $f(G+H)$.

Similarly to normal play games the \emph{dual} of a game $G$, denoted by $\op{G}$, refers to the game $G$ where Left and Right have swapped places.
Right wants what Left does not: if Left believes outcome $a$ is worse than outcome $b$, then Right \emph{prefers} outcome $a$ over outcome $b$.
Because of this, if $G$ is over some poset $A$ then $\op{G}$ is over $\op{A}$.
We define this formally as follows:
\begin{definition}[Dual game]\label{def:dual2}
  Let $G$ be a game over a poset $A$.
  If $G$ is composite, then the \emph{dual} of $G$ is $\op{G}=\gc{\op{(G^R)}}{\op{(G^L)}}$.
  If $[g]$ is atomic, then $\op{[g]}=[\op{g}]$, where $\op{g}$ is the corresponding element of $\op{A}$.
\end{definition}
We often write $\op{G^L}$ and $\op{G^R}$ without parenthesis.
Note that, when $G$ is composite, the definition of $\op{G}$ aligns with the normal play definition.

\begin{definition}[First and second player winning strategy for a game over $\bool$]
  Let $G$ be a game over $\bool$.
  \begin{itemize}
  \item Left has a \emph{first player winning strategy} in $G$ if there is some $G^L$ of $G$ such that Left has a second player strategy in $G^L$ \emph{or} $[g]$ is atomic and $g=\top$.
  \item Left has a \emph{second player winning strategy} in $G$ if Left has a first player winning strategy in $G^R$ for all $G^R$ of $G$ and, if $[g]$ is atomic, then $g=\top$.
  \end{itemize}
\end{definition}

\begin{theorem}[Induction on games over posets]
  Let $P$ be a property of games over some poset $A$.
  Suppose that $P(g)$ is true for all atomic games $g \in A$ and suppose that, given a composite game $G$, if $P(G')$ is true for all left options and right options $G'$ of $G$, then $P(G)$ is true.
  Then $P(G)$ is true for all games $G$ over $A$. \end{theorem}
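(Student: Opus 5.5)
We argue by contradiction, in the same way as the Conway induction theorem for normal play games. The key fact is that game forms over a poset $A$ are well-founded. By \cref{def:games_posets}, the class of game forms over $A$ is the \emph{smallest} class that contains all atomic games $[a]$ and is closed under forming $\gc{L}{R}$ from non-empty sets of game forms. We will use this minimality directly rather than a separately assumed descending game condition.

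Let $P$ satisfy the hypotheses, and let $\mathcal{C}$ be the class of game forms $G$ over $A$ such that $P(G')$ holds for every follower $G'$ of $G$ (including $G$ itself). We show that $\mathcal{C}$ satisfies the two closure conditions of \cref{def:games_posets}. For an atomic game $[a]$, its only follower is itself, and $P([a])$ holds by hypothesis, so $[a]\in\mathcal{C}$. Now let $L$ and $R$ be non-empty sets of games in $\mathcal{C}$, and put $G=\gc{L}{R}$. Every option $G'$ of $G$ lies in $\mathcal{C}$, so in particular $P(G')$ holds for all left and right options. The inductive hypothesis then gives $P(G)$. Any other follower of $G$ is a follower of some option, and so satisfies $P$ because that option lies in $\mathcal{C}$. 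Hence $G\in\mathcal{C}$.

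Since the class of all game forms over $A$ is the smallest class with these closure properties, it is contained in $\mathcal{C}$. In particular $P(G)$ holds for every game $G$ over $A$.

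As an alternative, more concrete route, one can mirror the earlier proof. Suppose $P(G)$ fails. Then $G$ is not atomic, since $P$ holds on atoms, so $G$ is composite. By the contrapositive of the hypothesis, some option $G^{(1)}$ also fails $P$, and iterating gives an infinite chain $G, G^{(1)}, G^{(2)},\dots$. The class of game forms with no such infinite chain satisfies both closure conditions, so by minimality every game over $A$ lies in it, which is a contradiction.

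The only delicate point is justifying well-foundedness from the "smallest class" definition, and the minimality argument above handles it. Everything else is a direct transcription of the normal-play proof, with atoms added as extra base cases.
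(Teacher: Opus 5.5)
Your proof is correct and follows essentially the same route as the paper: the paper also applies the minimality clause of \cref{def:games_posets} to a class closed under both formation rules. The paper takes that class to be the games satisfying $P$ itself, which is already closed under the hypotheses, so your strengthening to ``every follower satisfies $P$'' and your alternative infinite-chain argument are harmless but not needed.
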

\begin{proof}
  Let $P$ be a property as defined above and let $C$ be the class of all games that satisfy $P$.
  By assumption the game $[x]$ is in $C$ for all $x\in A$ and the game $G=\gc{L}{R}$ is in $C$ for any two non-empty sets of games $L$ and $R$ in $C$.
  We defined the class of games over the poset $A$ to be the \emph{smallest} class which satisfies the above conditions (see \cref{def:games_posets}).
  Then the class of games over $A$ is contained in $C$, which implies $P(G)$ is true for all $G$ over $A$.
\end{proof}

In \cref{rex_premotivity} we informally defined the property of \emph{premotivity}.
Now we define this formally.
Given two games $G$ and $H$ over the poset $A$, we refer to $\lcomp{G}{H}$ as the \emph{comparison game}, where $\lambda$ is the comparison function defined in \cref{Background:mono_fun}.
The game that Alice played against Charboblie can be written as $\lcomp{G}{G}$ , where Bob's board is $G$ and Charlie's board is $\op{G}$.
We can prove that the second player copycat strategy holds for all games over posets:
\begin{proposition}[Second player copycat strategy]\label{prop:copycat}
Let $G$ be a game over a poset $A$. Then Left has a second player winning strategy in the game $\lcomp{G}{G}$. \end{proposition}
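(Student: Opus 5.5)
We argue by induction on games over posets (the induction theorem above), proving for every game $G$ over $A$ that Left has a second player winning strategy in $\lcomp{G}{G}$, which by definition is the game $\lambda(\op{G}+G)$ over $\bool$. The copycat idea is that whenever Right moves in one component, Left answers with the corresponding move in the other component, returning to a position of the form $\lcomp{G'}{G'}$ for a follower $G'$ of $G$.

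\textbf{Base case.} If $G=[g]$ is atomic, then $\op{G}+G=[(\op{g},g)]$ and $\lambda(\op{g},g)=\top$ because $g\leq_A g$. An atomic game over $\bool$ with value $\top$ has no right options, so Left wins as second player.

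\textbf{Inductive step.} Let $G=\gc{G^L}{G^R}$ be composite. Then $\op{G}=\gc{\op{G^R}}{\op{G^L}}$ is also composite, so by \cref{sum_on_game} and \cref{map_on_game} the right options of $\lambda(\op{G}+G)$ are exactly the games
\[\lambda(\op{G^L}+G) \quad\text{and}\quad \lambda(\op{G}+G^R),\]
one for each left option $G^L$ and each right option $G^R$ of $G$. The first kind arises from Right moving in $\op{G}$, whose right options are the $\op{G^L}$; the second arises from Right moving in $G$. The game $\lambda(\op{G}+G)$ is composite, so there is no atomic condition to check. We must show that Left has a first player winning strategy in each of these options.

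In $\lambda(\op{G^L}+G)$, Left moves in the $G$ component to $G^L$. This is legal because $G$ is composite and has $G^L$ among its left options. The resulting game is $\lambda(\op{G^L}+G^L)$, in which Left has a second player winning strategy by the induction hypothesis.

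In $\lambda(\op{G}+G^R)$, Left moves in the $\op{G}$ component, whose left options are the $\op{G^R}$, to $\op{G^R}$. The result is $\lambda(\op{G^R}+G^R)$, which is again a second player win for Left by the induction hypothesis.

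The only delicate point is bookkeeping. We must check that the sum definition, with its separate atomic and composite clauses, gives exactly these options. In particular, when one summand is atomic, for instance if $G^L$ is atomic while $G$ is composite, the option $\op{G^L}+G$ still has $G^L+\op{G^L}$-style options available through the clause ``$g+H=\gc{g+H^L}{g+H^R}$.'' We must also check that $\lambda$ commutes with taking options, which holds by \cref{map_on_game}.

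Since Left can win as first player in every right option, she has a second player winning strategy in $\lcomp{G}{G}$, which completes the induction.
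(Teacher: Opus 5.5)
Your proof is correct and is essentially the paper's argument. It proceeds by induction on $G$: the atomic case follows from reflexivity of $\leq_A$, and in the composite case each Right move to $\lcomp{(G^L)}{G}$ or $\lcomp{G}{G^R}$ is answered with the mirroring move to $\lcomp{(G^L)}{G^L}$ or $\lcomp{(G^R)}{G^R}$, where the induction hypothesis applies. Your extra check that the atomic clauses of the sum still supply the mirroring move covers a detail the paper leaves implicit; note that ``$G^L+\op{G^L}$-style options'' should read $\op{G^L}+G^L$.
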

\begin{proof}
  Assume $[g]$ is an atomic game. Then $\lcomp{g}{g}$ is equal to $\top$ because $A$ is reflexive.
  
  Now assume $G$ is composite.
  By the induction hypothesis, Left has a second player winning strategy in the game $\lcomp{(G')}{G'}$ for all left or right options $G'$ of $G$.
  If Right moves to $\lcomp{G}{G^R}$, then Left copies, moving to $\lcomp{(G^R)}{G^R}$.
  If Right moves to $\lcomp{(G^L)}{G}$, then Left copies, moving to $\lcomp{(G^L)}{G^L}$.
  So Left has a first player winning strategy in the games $\lcomp{G}{G^R}$ and $\lcomp{(G^L)}{G}$ for all $G^L$ and $G^R$ of $G$.
  Therefore Left has a second player winning strategy in the game $\lcomp{G}{G}$, as desired. \end{proof}

\begin{definition}[Premotivity]\label{premotive}
  We say that a game $G$ is \emph{premotive} if Left has a first player winning strategy in the game $\lcomp{G}{G}$.
\end{definition}

In \cref{Properties_of_Rex_revisited} we will prove that premotive games satisfy the lookahead property (see \cref{prop_x}).

\chapter{Evaluating games}\label{Evaluating_games}

In normal play games, we say that $G\leq H$ if Left would prefer $H$ over $G$ in any game that can be written as $G+X$, where $X$ is some other game.
Because the game $X$ can be thought of as a \emph{context} in which to play $G$ or $H$, we call this the \emph{contextual} definition of the $\leq$ relation.
As we saw in \cref{normal_intrinsic}, in the case of normal play games, there is also an equivalent \emph{intrinsic} definition of the $\leq$ relation, i.e., one that does not refer to any contexts.

For our games over posets, the situation is slightly more complicated.
We will define a contextual order in \cref{cont_order} and an intrinsic order in \cref{intrinsic_order}.
These orders will not be equivalent over the class of all games, but they will become equivalent when restricted to a special subclass of games.

\section{The contextual order}\label{cont_order}

We want to define something similar to the contextual order on games over a poset $A$.
Informally we say \emph{$G$ is less than or equal to $H$ contextually}, denoted by $G\leqc H$, if Left would prefer $H$ over $G$ regardless of the context, i.e., whatever is happening on the rest of the board.

Let $A$ be a poset and recall that $\fun{A}$ is the poset of monotone functions from $A$ to $\bool$ (see \cref{Background:Posets}).
We define $\varepsilon\colon A\times\fun{A}\to\bool$ to be the application function, where \[\varepsilon(a,f)=f(a).\]

When embedding a $\Rex$ position into the completion of a larger board, we can think of this completion as an element of $\fun{A}$. Because of this, we define a \emph{context} to be a game $X$ over $\fun{A}$.
Then the game $G+_\varepsilon X$ is the game $G$ embedded into the game $X$.
From here on, when we add a game to a context will write $G + X$, omitting the $\varepsilon$.

\begin{definition}[Contextual order on games]
  Let $G$ and $H$ be games over some poset $A$.
  \begin{itemize}
  \item We say $G\leqc H$ if $o(G + X)\leq o(H + X)$ for all contexts $X$.
  \item We say $G\contri H$ if $o_R(G+X) \leq o_L(H+X)$ for all contexts $X$.
  \end{itemize}
\end{definition}
We say that $G$ is \emph{contextually equivalent} to $H$, denoted by $G\conteq H$, if $G\leqc H$ and $H\leqc G$.
The contextual relation $\leqc$ is transitive and reflexive, so it forms a preorder on the set of games.
In fact, we also have that $G\leqc H\contri K$ implies $G\contri K$ and $G\contri H\leqc K$ implies $G\contri K$.
It is also clear that $G\leqc H$ implies $G+K\leqc H+K$ for any game $K$.

With this definition we can already prove some results about games that we might expect.
The following lemma is an example of this.
Informally, the lemma says that replacing a left option of a game with something better is always good for Left.
\begin{lemma}Suppose $H\leqc K$.
  
  If $G\identical\gc{G^L,H}{G^R}$ and $G'\identical\gc{G^L,K}{G^R}$, then $G\leqc G'$.
  \label{congruent_option_replacement}
\end{lemma}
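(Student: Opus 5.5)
We have to show that $o(G+X)\leq o(G'+X)$ for every context $X$ over $\fun{A}$. By \cref{strategy_outcome}, this means showing two things: $o_L(G+X)\leq o_L(G'+X)$ and $o_R(G+X)\leq o_R(G'+X)$. The natural approach is a single induction on the context $X$, using the induction principle for games over posets. We prove both inequalities simultaneously. Since $G$ and $G'$ are composite, $G+X$ and $G'+X$ are composite for every $X$, so neither sum is ever atomic. The only distinction is whether $X$ is atomic: if it is, the only moves are in $G$ or $G'$; otherwise there are also moves in $X$.

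First consider the left outcome. Suppose $o_L(G+X)=\top$, so Left has a winning move in $G+X$ to a game with $o_R=\top$. There are three cases.
\begin{enumerate}
\item The move is to $G^L+X$ with $G^L$ one of the shared options. Then $G^L+X$ is also a left option of $G'+X$, and we are done.
\item The move is to $H+X$. Since $H\leqc K$, we have $o_R(H+X)\leq o_R(K+X)$ by definition of the contextual order, and $K+X$ is a left option of $G'+X$.
\item The move is to $G+X^L$ (only possible when $X$ is composite). The induction hypothesis applied to $X^L$ gives $o_R(G+X^L)\leq o_R(G'+X^L)$, and $G'+X^L$ is a left option of $G'+X$.
\end{enumerate}

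Now consider the right outcome. Suppose $o_R(G+X)=\top$, so every right option of $G+X$ has $o_L=\top$. We must check every right option of $G'+X$. An option $G^R+X$ is literally also a right option of $G+X$, since $G$ and $G'$ have the same right options. An option $G'+X^R$ is handled by the induction hypothesis at $X^R$, which gives $o_L(G+X^R)\leq o_L(G'+X^R)$.

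The argument is essentially routine; there is no real obstacle. The one point needing care is the induction hypothesis itself. It must be the full statement "for all contexts $Y$ that are options of $X$, $o(G+Y)\leq o(G'+Y)$", covering both the left and right outcomes. This is needed because a left move in the context flips to requiring the $o_R$ comparison, and vice versa. We must also use $H\leqc K$ at arbitrary contexts, not just at $X$; this is fine because that hypothesis is quantified over all contexts.
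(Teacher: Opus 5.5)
Your proof is correct and is essentially the paper's argument: induction on the context $X$, proving $o_L(G+X)\leq o_L(G'+X)$ and $o_R(G+X)\leq o_R(G'+X)$ simultaneously. You use the same three-way split on Left's winning move (a shared option; replacing $H$ by $K$ using $H\leqc K$ at the same context $X$; or a move in $X$ handled by induction) and the same two-way split on Right's replies.
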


\begin{proof}

  We will show this by proving the following two claims using induction on $X$:

  \begin{enumerate}[(A)]
  \item $o_L(G+X)\leq o_L(G'+X)$ and
  \item $o_R(G+X)\leq o_R(G'+X)$.
  \end{enumerate}
  We will prove claim (A) first.
  Assume Left has a first player winning strategy in the game $G+X$.
  Then there must be some $(G+X)^L$ such that Left has a second player winning strategy in $(G+X)^L$.
  We want to show that Left has a first player winning strategy in the game $G'+X$.
  If $(G+X)^L$ is of the form $G+X^L$ for some  $X^L$ of $X$, then Left can copy that move in $G'+X$.
  By the induction hypothesis, $o(G+X^L)\leq o(G'+X^L)$.
  Then Left has a second player winning strategy in $G'+X^L$.
  
  If $(G+X)^L$ is of the form $G^L+X$ for $G^L\neq H$, then Left can just move to $G^L+X$ in the game $G'+X$.
  Finally, assume $(G+X)^L$ is of the form $H+X$.
  Because $H\leqc K$, we know that $o(H+X)\leq o(K+X)$.
  Then Left moves to the game $K+X$ in the game $G'+X$.
  Then we have shown that $o_L(G+X)\leq o_L(G'+X)$.

  Now we will prove claim (B).
  Assume Left has a second player winning strategy in the game $G+X$.
  Then for all $(G+X)^R$ of $G+X$ Left has a first player winning strategy in $(G+X)^R$.
  We want to show that Left has a first player winning strategy in $(G'+X)^R$ for all $(G'+X)^R$ of $G'+X$.
  
  The Left's first player winning strategy in $G^R+X$ can be copied directly because the right options of $G$ and $G'$ are the same.
  Assume Right moves to some $G'+X^R$ of $G'+X$. By the induction hypothesis $o(G+X^R)\leq o(G'+X^R)$, so Left has a first player winning strategy in $G'+X^R$.
  Therefore, Left has a second player winning strategy in $G'+X$, as desired.
\end{proof}

\section{The intrinsic order}\label{intrinsic_order}

The contextual order is a valid order to impose on games over posets, but it is difficult to verify.
It is not reasonable to write a program that checks if $o(G+X)\leq o(G+X)$ for all contexts $X$ because there are infinitely many contexts $X$.
As is usual in CGT, we would like an order on games which is equivalent to the contextual order and does not refer to contexts.

\begin{definition}[Intrinsic order on games over a poset]
  Let $G$ and $H$ be games over the same poset $A$.
  We say $G\leqint H$ when \emph{all} of the following hold:
  \begin{enumerate}
  \item For all left options $G^L$ of $G$, we have $G^L\tri H$, and
  \item for all right options $H^R$ of $H$, we have $G\tri H^R$, and
  \item if $G\identical g$, $H\identical h$ are atomic games, then $g\leq_A h$.
  \end{enumerate}
  We say $G\tri H$ when \emph{at least one} of the following holds:
  \begin{enumerate}
  \item There exists some $G^R$ of $G$ such that $G^R\leqint H$, or
  \item there exists some $H^L$ of $H$ such that $G\leqint H^L$.
  \end{enumerate}
  We say that $G\inteq H$, or $G$ \emph{is intrinsically equivalent to} $H$, when $G\leq H$ and $H\leq G$.
\end{definition}

\begin{proposition}\label{comp_int}
  Given two games $G$ and $H$ over the same poset, we have the following:
  \begin{enumerate}[(A)]
  \item $G\leqint H$ if and only if Left has a second player winning strategy in the comparison game $\lcomp{G}{H}$ in which Left makes the last move.
  \item $G\tri H$ if and only if Left has a first player winning strategy in $\lcomp{G}{H}$ in which Left makes the last move.
  \end{enumerate}
\end{proposition}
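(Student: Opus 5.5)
The plan is a simultaneous induction on the pair $(G,H)$, using induction on games over posets applied to the comparison game $\lcomp{G}{H}$: every option of the comparison game is again a comparison game of followers. I will prove (A) and (B) together for $(G,H)$, assuming both hold for all pairs in which one component is replaced by one of its options. The first step is to make explicit what the phrase ``in which Left makes the last move'' means. It is a modification of the winning-strategy definitions for games over $\bool$ at atomic positions: Left has a second player win in an atomic game $[b]$ iff $b=\top$, but Left never has a first player win in an atomic game, since Left, having to move, cannot. Away from atomic positions the definitions are the usual recursive ones.

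Next I unfold the options of $\lcomp{G}{H}=\lambda(\op{G}+H)$ using \cref{sum_on_game}, \cref{map_on_game} and \cref{def:dual2}. Since the left options of $\op{G}$ are the $\op{G^R}$, the left options of the comparison game are $\lcomp{G^R}{H}$ and $\lcomp{G}{H^L}$. Its right options are $\lcomp{G^L}{H}$ and $\lcomp{G}{H^R}$. When exactly one of $G,H$ is atomic, only the options coming from the composite summand appear, exactly as in the mixed clauses of \cref{sum_on_game}. When both are atomic, $\lcomp{g}{h}=[\lambda(\op{g},h)]$, which is $\top$ iff $g\leq_A h$.

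With this in hand, I verify (A) and (B) case by case.
\begin{itemize}
\item Both atomic. Left has a second player win iff $\lambda(\op{g},h)=\top$ iff $g\leq_A h$, which matches clause 3 of $\leqint$; clauses 1 and 2 are vacuous. Left has no first player win, and $g\tri h$ also fails since neither $g^R$ nor $h^L$ exists.
\item Not both atomic. The comparison game is composite. Left has a second player win iff Left has a first player win in every right option $\lcomp{G^L}{H}$ and $\lcomp{G}{H^R}$. By induction (B) this says $G^L\tri H$ for all $G^L$ and $G\tri H^R$ for all $H^R$, and clause 3 is vacuous. Dually, Left has a first player win iff some left option $\lcomp{G^R}{H}$ or $\lcomp{G}{H^L}$ admits a second player win, which by induction (A) is exactly the definition of $G\tri H$.
\end{itemize}
The mixed cases (one side atomic) are covered automatically, since the missing options correspond precisely to the vacuous clauses.

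The main obstacle is the bookkeeping rather than any real strategic argument. I must check carefully that the dual swaps $G^L$ and $G^R$ correctly inside $\op{G}$, so that a Left move in $\op{G}$ corresponds to $G^R$. I must also check that the atomic convention ``Left makes the last move'' is precisely what makes $G\tri H$ false when both games are atomic. Everything else is a direct matching of the recursive clauses.
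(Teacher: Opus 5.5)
Your proposal is correct and follows essentially the same route as the paper. Both run a simultaneous induction on $G$ and $H$, matching the right options (from $G^L$ and $H^R$) and the left options (from $G^R$ and $H^L$) of the comparison game against the clauses of $\leq$ and $\tri$, and both handle the case where $G$ and $H$ are atomic by the definition of $\lambda$. Your explicit convention that Left never has a first player win at an atomic position is exactly what the paper uses, informally, when it argues in part (B) that Left having the last move rules out both games being atomic.
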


\begin{proof}
  Let $G$ and $H$ be two games over some poset $A$. We will prove both claims by simultaneous induction on the games $G$ and $H$.

  First, we will prove claim (A).
  Assume $G\leqint H$.
  If $G=g$ and $H=h$ are both atomic, then by the definition of $\lambda$ Left has already won the game $\lcomp{g}{h}$.
  Now assume that $G$ and $H$ are not both atomic.
  We want to prove that Left has a second player winning strategy in the comparison game $\lcomp{G}{H}$ in which Left makes the last move.
  The options of $\lcomp{G}{H}$ which Right can move to are either of the form $\lcomp{G^L}{H}$ or of the form $\lcomp{G}{H^R}$. By the definition of $G\leqint H$, for all $G^L$ of $G$ and $H^R$ of $H$ we have $G^L\tri H$ and $G\tri H^R$, respectively.
  Using part (B) of the induction hypothesis, this implies that in every right option of the game $\lcomp{G}{H}$ there is a first player winning strategy for Left in which Left makes the last move.
  This means Left has a second player winning strategy in the game $\lcomp{G}{H}$ in which Left makes the last move.

  Next we assume that, in the game $\lcomp{G}{H}$, Left has a second player winning strategy in which they make the last move.
  We want to prove that $G\leqint H$.
  Let $G^L$ be some left option of $G$.
  The game $\lcomp{G^L}{H}$ is a right option of $\lcomp{G}{H}$, so Left must have a first player winning strategy in which Left moves last.
  Then using part (B) of the induction hypothesis, we have that $G^L\tri H$.
  By the same argument we see that $G\tri H^R$ for all right options $H^R$ of $H$.
  If the game $\lcomp{G}{H}$ is atomic, then $g$ and $h$ must be atoms and $g\leq h$.
  Then we have that $G\leqint H$ as desired.

  Now we will prove claim (B).
  Assume $G\tri H$.
  Either there exists some $G^R$ of $G$ such that $G^R\leqint H$ or there exists some $H^L$ of $H$ such that $G\leqint H^L$.
  Both $\lcomp{G^R}{H}$ and $\lcomp{G}{H^L}$ are left options of the game $\lcomp{G}{H}$.
  Then using part (A) of the induction hypothesis, there exists a left option of $\lcomp{G}{H}$ in which Left has a second player strategy to win and make the last move.
  Then in the game $\lcomp{G}{H}$ Left has a first player strategy to win and make the last move, as desired.

  Finally, assume that in the game $\lcomp{G}{H}$ Left has a first player strategy to win and make the last move.
  We want to show that $G\tri H$.
  Note that ``Left has the last move'' means that Left always has a move on their turn.
  Then $G$ and $H$ cannot both be atomic, because Left would have no move to make.
  Then there must be some left option of $\lcomp{G}{H}$ where Left has a second player strategy to win and make the last move.
  That left option will either be of the form $\lcomp{G^R}{H}$ or of the form $\lcomp{G}{H^L}$.
  Using part (A) of the induction hypothesis, we see that either there exists some $G^R$ of $G$ such that $G^R\leqint H$ or there exists some $H^L$ of $H$ such that $G\leqint H^L$.
  This means that $G\tri H$ as desired.
\end{proof}

\begin{cor}
  Let $G$ and $H$ be games over the same poset $A$. Then $G\leq H$ if and only if $\top\leq \lcomp{G}{H}$, and $G\tri H$ if and only if $\top\tri \lcomp{G}{H}$.
  \label{comparison_game_II}
\end{cor}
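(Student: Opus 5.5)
We derive \cref{comparison_game_II} from \cref{comp_int} by comparing the game $\lcomp{G}{H}$, which is a game over $\bool$, with the atomic game $\top$ over $\bool$. The plan is to unfold the intrinsic definitions of $\top\leq K$ and $\top\tri K$ for an arbitrary game $K$ over $\bool$. We will show they coincide with ``Left has a second (respectively first) player winning strategy in $K$ in which Left makes the last move''. Then we apply this with $K=\lcomp{G}{H}$ and invoke \cref{comp_int}.

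First we show, by simultaneous induction on $K$ over $\bool$:
\begin{enumerate}[(i)]
\item $\top\leq K$ if and only if Left has a second player winning strategy in $K$ in which Left makes the last move;
\item $\top\tri K$ if and only if Left has a first player winning strategy in $K$ in which Left makes the last move.
\end{enumerate}
Since $\top$ is atomic, it has no left or right options, so the definitions simplify a great deal.

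For (i), condition 1 of $\top\leq K$ is vacuous. Condition 2 asks that $\top\tri K^R$ for every $K^R$. Condition 3 asks that, if $K\identical k$ is atomic, then $\top\leq k$, i.e.\ $k=\top$. This matches the second-player strategy exactly. In the atomic case Left has already won with $k=\top$, and Right has no move, so Left trivially moves last. In the composite case, Left needs a winning last-move first-player strategy in every $K^R$, which is (ii) for $K^R$ by induction.

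For (ii), clause 1 of $\top\tri K$ is impossible, since $\top$ has no right options. So $\top\tri K$ holds if and only if some $K^L$ satisfies $\top\leq K^L$. By (i) for $K^L$, this is the same as having a left move to a position where Left wins second while moving last. That is precisely a first-player last-move win. Note that if $K$ is atomic, both sides are false, because ``Left makes the last move'' requires Left to have a move on her turn. This is exactly the convention used in the proof of \cref{comp_int}(B).

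Combining, $G\leq H$ holds if and only if Left has a second player last-move winning strategy in $\lcomp{G}{H}$, by \cref{comp_int}(A). By (i), this holds if and only if $\top\leq\lcomp{G}{H}$. The $\tri$ statement follows the same way using \cref{comp_int}(B) and (ii).

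The main subtlety is the atomic base case: making sure ``Left makes the last move'' is interpreted consistently with \cref{comp_int}. When $K$ is atomic, the second-player condition holds vacuously, but the first-player condition fails. I would state this convention explicitly before running the induction.

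Alternatively, one could observe that $\top\leq K$ is literally \cref{comp_int} applied with $\lambda(\op{\top},\cdot)$ equal to the identity on $\bool$. Under that identification, $\lcomp{\top}{K}$ is isomorphic to $K$. The direct induction above avoids needing that identification.
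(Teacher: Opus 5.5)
Your proof is correct and is essentially the paper's: both reduce each side to ``Left has a second (respectively first) player winning strategy in $\lcomp{G}{H}$ in which she makes the last move'' via \cref{comp_int}. The only difference is in the second step. The paper applies \cref{comp_int} again to the pair $(\top,\lcomp{G}{H})$, using that $\lambda(\op{\top},\cdot)$ is the identity on $\bool$, so that $\lcomp{\top}{K}$ is just $K$; this is your ``alternative''. You instead re-derive that special case by a short direct induction, which is equally valid and uses the same last-move convention.
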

\begin{proof}
  As shown above, $G\leqint H$ if and only if Left has a second player strategy in the game $\lcomp{G}{H}$ to win and make the last move.
  That must be true if and only if Left has a second player strategy in the game $\lcomp{\top}{(\lcomp{G}{H})}$ to win and make the last move, which is true if and only if $\top\leqint \lcomp{G}{H}$, as desired.

  The proof that $G\tri H$ if and only if $\top\tri\lcomp{G}{H}$ is similar.
\end{proof}

Ideally, we would like to have that $G\leq H$ if and only if $G\leqc H$. Unfortunately, this is not always the case. As it stands, this intrinsic relation is not even transitive. For example, we have:

\begin{equation}
  \label{eq:not-trans}
  \top\leq\gc{\gc{\bot}{\bot}}{\gc{\top}{\top}}\leq\bot.
\end{equation}
We will spend the next few chapters looking at properties which the game form of any $\Rex$ position (and more generally, any set coloring game with antimonotone payoff function) has.
These are the properties of parity, premotivity, and $*$-antimonotonicity that we already gave examples of in \cref{Rex_concrete}.
It turns out that the class of premotive games with parity is well-behaved:
on it, the intrinsic order is transitive and coincides with the contextual order.
Also, this class of games admits canonical forms.

\section{Intrinsic order and sum}
The following proposition shows that the intrinsic order is preserved by summation.
\begin{proposition}\label{sum_preserves_order}
  For games $G$ and $G'$ over some poset $A$ and games $H$ and $H'$ over some poset $B$, all of the following hold:
  \begin{enumerate}[(A)]
  \item If $G\leqint G'$ and $H\leqint H'$, then $G+H\leqint G' + H'$.
  \item if $G\tri G'$ and $H\leqint H'$, then $G+H\tri G' + H'$.
  \item if $G\leqint G'$ and $H\tri H'$, then $G+H\tri G' + H'$.
  \end{enumerate}
\end{proposition}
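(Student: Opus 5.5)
We prove (A), (B) and (C) simultaneously by induction, using the recursive definitions of $\leqint$ and $\tri$. The induction measure is the sum of the birthdays (heights) of $G$, $G'$, $H$, $H'$; each recursive appeal below lowers this sum by at least one. The proof is a case analysis on which summands are atomic, because the definition of $+$ in \cref{sum_on_game} splits into four cases. The idea is the usual ``play the two comparisons side by side'' argument. By \cref{comp_int}, $G\leqint G'$ means Left wins $\lcomp{G}{G'}$ as second player while moving last, and similarly for $H$. We then show $G+H\leqint G'+H'$ by unwinding the definition, rather than by building a new game.

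\textbf{(A).} Assume $G\leqint G'$ and $H\leqint H'$. If all four games are atomic, $G+H=[(g,h)]$ and $G'+H'=[(g',h')]$, and $(g,h)\leq(g',h')$ holds componentwise. Otherwise we must check clauses 1 and 2 of the definition; clause 3 is vacuous unless both sums are atomic.

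For clause 1, take a left option of $G+H$. It is either $G^L+H$ with $G$ composite, or $G+H^L$ with $H$ composite. In the first case $G^L\tri G'$ by clause 1 for $G\leqint G'$, so (B) of the induction hypothesis gives $G^L+H\tri G'+H'$. The second case is symmetric and uses (C). Clause 2, for the right options $G'^R+H'$ and $G'+H'^R$ of $G'+H'$, is handled the same way.

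\textbf{(B).} Assume $G\tri G'$ and $H\leqint H'$. Then either $G^R\leqint G'$ for some $G^R$, or $G\leqint G'^L$ for some $G'^L$. In the first case $G$ is composite, so $G^R+H$ is a right option of $G+H$. By (A) of the induction hypothesis, $G^R+H\leqint G'+H'$, which witnesses $G+H\tri G'+H'$. The second case is analogous: $G'+H'$ has the left option $G'^L+H'$, and $G+H\leqint G'^L+H'$. Part (C) is symmetric, with the roles of the two components swapped.

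\textbf{Main obstacle.} The delicate point is the atomic bookkeeping in (A). One of the sums may be atomic while the other is composite; for instance $G,H$ are atomic but $G'$ is composite. Clause 1 is then vacuous. Clause 2 requires $G+H\tri G'^R+H'$, where $G'^R$ comes from $G\leqint G'$ giving $G\tri G'^R$. We must check that every option of a sum really does arise from an option of a summand, which is exactly what the four cases of \cref{sum_on_game} guarantee. We also need to record that clause 3 of $G\leqint G'$ only constrains the case where both $G$ and $G'$ are atomic. When $G$ is atomic and $G'$ is composite, we instead rely on the clause 2 information $G\tri G'^R$, which is still available. With these observations every case reduces to an inductive appeal on strictly smaller games.
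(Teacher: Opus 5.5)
Your proposal is correct and is essentially the paper's proof: the same simultaneous induction, in which (A) is reduced to (B) and (C) through clauses 1 and 2 of the hypotheses, and (B) and (C) are reduced to (A) by lifting the witnessing option of one summand to an option of the sum. One small point: option sets may be infinite, so birthdays are ordinals in general, and your ``sum of birthdays'' should then be the natural (Hessenberg) sum or be replaced by well-founded induction on the quadruple $(G,G',H,H')$; for the finite games coming from Rex your measure works as stated.
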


\begin{proof}
  Let $G$ and $G'$ be games over the poset $A$ and let $H$ and $H'$ be games over the poset $B$.
  We will prove these claims using joint induction.

  First we will prove claim (A).
  Assume that $G\leqint G'$ and $H\leqint H'$.
  We want to show that $G+H\leqint G'+H'$.
  If $g$, $h$, $g'$, and $h'$ are all atomic, then  $(g,h)\leq_{A\times B} (g',h')$ as desired.
  Suppose at least one of the games is not atomic.
  To show $G+H\leqint G'+H'$, we will need to show that $(G+H)^L\tri G'+H'$ for all left options $(G+H)^L$ of $G+H$ and that $G+H\tri (G'+H')^R$ for all right options $(G'+H')^R$ of $G'+H'$.
  We start by proving the first part; take some left option of $G+H$, of the form $G^L+H$.
  Because $G\leqint G'$, we know that $G^L\tri G$ for all left options $G^L$ of $G$.
  By part (B) of the induction hypothesis, $G^L\tri G'$ and $H\leqint H'$ implies $G^L+H\tri G'+H'$.
  Now take a left option of the form $G+H^L$.
  Similarly, by using part (C) of the induction hypothesis, we have that $G\leqint G'$ and $H^L\tri H$ implies $G+H^L\tri G'+H'$.
  This proves that $(G+H)^L\leqint G'+H'$ for any left option $(G+H)^L$ of $G+H$, as desired.
  We omit the proof of the second claim because it is dual to the proof above.

  Next we will prove claim (B).
  Assume that $G\tri G'$ and $H\leqint H'$.
  Because $G\tri G'$, either there exists some $G^R$ of $G$ such that $G^R\leqint G'$ or there exists some $G'^L$ of $G'$ such that $G\leqint G'^L$.
  In the first case, $G^R\leqint G'$ and $H\leqint H'$ implies $G^R+H\leqint G'+H'$ by the induction hypothesis, which implies that $G+H\tri G'+H'$.
  In the second case, $G\leqint G'^L$ and $H\leqint H'$ implies that $G+H\leqint G'^L+H'$ by part (A) of the induction hypothesis, which implies that $G+H\tri G'+H'$.

  The proof of (C) is symmetric to the proof of (B).
\end{proof}


\section{Intrinsic order and monotone functions}
The next two propositions show that the intrinsic order behaves as expected with respect to monotone functions applied to games.
\begin{proposition}
  Let $G$ and $H$ be games over some poset $A$, and let $\phi$ and $\psi$ be monotone functions from the poset $A$ to some other poset $Z$.
  If $\phi\leq\psi$ and $G\leq H$, then $\phi(G)\leqint\psi(H)$.
  \label{fun_equiv}  
\end{proposition}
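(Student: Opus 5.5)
We prove the statement together with a companion claim by simultaneous induction on the pair $(G,H)$, using induction on games over posets (structural induction, applied to pairs). The companion claim is: if $\phi\leq\psi$ and $G\tri H$, then $\phi(G)\tri\psi(H)$. It is needed because the definition of $\leqint$ refers to $\tri$ on options and vice versa, so the two statements must be proved together.

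\emph{Atomic case.} Suppose $G\identical g$ and $H\identical h$ are both atomic. Then $G\leqint H$ means $g\leq_A h$. Monotonicity of $\phi$ gives $\phi(g)\leq_Z\phi(h)$, and $\phi\leq\psi$ in $\hom(A,Z)$ gives $\phi(h)\leq_Z\psi(h)$. Hence $[\phi(g)]\leqint[\psi(h)]$.

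\emph{The $\leqint$ claim when not both games are atomic.} First note that the map operation preserves atomicity and composite structure. The left options of $\phi(G)$ are exactly the games $\phi(G^L)$, and the right options of $\psi(H)$ are exactly the games $\psi(H^R)$. Condition (1) for $G\leqint H$ gives $G^L\tri H$ for every $G^L$, so the companion claim (induction hypothesis) gives $\phi(G^L)\tri\psi(H)$. Symmetrically, condition (2) gives $G\tri H^R$, hence $\phi(G)\tri\psi(H^R)$. Condition (3) is vacuous here, because $\phi(G)$ and $\psi(H)$ are both atomic only if $G$ and $H$ are.

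\emph{The $\tri$ claim.} If $G\tri H$ is witnessed by some $G^R\leqint H$, then the induction hypothesis gives $\phi(G^R)\leqint\psi(H)$. Since $\phi(G^R)$ is a right option of $\phi(G)$, we get $\phi(G)\tri\psi(H)$. The case where the witness is some $H^L$ with $G\leqint H^L$ is symmetric: $\phi(G)\leqint\psi(H^L)$, and $\psi(H^L)$ is a left option of $\psi(H)$.

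Every step is routine. The only point needing care is that the map is applied with two different functions, $\phi$ on the left game and $\psi$ on the right game. The argument must therefore use $\phi\leq\psi$ only at the atoms, where the chain $\phi(g)\leq\phi(h)\leq\psi(h)$ carries the weight; everywhere else the recursion simply pushes $\phi$ and $\psi$ through the options. Transitivity of $\leqint$ on games is never invoked, which matters because the paper notes that $\leqint$ is not transitive in general (see \cref{eq:not-trans}).
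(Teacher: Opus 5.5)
Your proposal is correct and takes the same route as the paper: a simultaneous induction on $(G,H)$ that proves the $\leqint$ claim together with the companion $\tri$ claim, pushing $\phi$ and $\psi$ through the options and using $\phi\leq\psi$ only at the atoms. Your atomic case, via $\phi(g)\leq_Z\phi(h)\leq_Z\psi(h)$, spells out a step the paper merely asserts ``by assumption,'' and your composite step correctly concludes $\phi(G^L)\tri\psi(H)$ where the paper's text writes $\leqint$.
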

\begin{proof}
  Assume $G$ and $H$ are games over the poset $A$, and $\phi\colon A\rightarrow Z$ and $\psi\colon A\rightarrow Z$ are monotone functions such that $\phi\leq\psi$.
  We will prove the following claims by induction on $G$ and $H$:
  \begin{enumerate}[(A)]
  \item If $G\leqint H$, then $\phi(G)\leqint\psi(H)$.
  \item If $G\tri H$, then $\phi(G)\tri\psi(H)$.
  \end{enumerate}

  First we prove claim (A).
  If $g$ and $h$ are atomic, then $\phi(g)\leq\psi(h)$ by assumption.
  Assume $G$ or $H$ is composite.
  Let $G^L$ be some left option of $G$.
  We know that $G^L\tri H$, so by the induction hypothesis, $\phi(G^L)\leqint\psi(H)$.
  Let $H^R$ be some right option of $H$.
  By dual reasoning, $\psi(G)\tri\phi(H^R)$.
  Then $\phi(G)\leqint\psi(H)$ as desired.
  
  Now we prove claim (B).
  Assume $G\tri H$.
  Without loss of generality, assume there exists some $H^L$ of $H$ such that $G\leqint H^L$.
  Then by the induction hypothesis, $\phi(G)\leqint\psi(H^L)$.
  Then we have proved that $\phi(G)\tri\psi(H)$. \end{proof}

\begin{proposition}\label{fun_equiv_trans}
  Let $G$ be a game over some poset $A$, and let $\phi$ and $\psi$ be monotone functions from the poset $A$ to some other poset $Z$. Suppose $\phi\leq\psi$. Then we have the following:
  \begin{enumerate}[(A)]
  \item If $\top\leqint\phi(G)$, then $\top\leqint\psi(G)$.
  \item If $\top\tri\phi(G)$, then $\top\tri\psi(G)$.
  \end{enumerate}  \end{proposition}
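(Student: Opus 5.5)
The natural shortcut would be to apply \cref{fun_equiv} with $H\identical G$ to get $\phi(G)\leqint\psi(G)$, and then chain $\top\leqint\phi(G)\leqint\psi(G)$. This is not available, because the intrinsic order is not transitive on arbitrary games (see \cref{eq:not-trans}). So I will instead prove claims (A) and (B) directly, by simultaneous induction on the game $G$ over $A$, using the induction principle for games over posets. Throughout, $\top$ denotes the atomic game $[\top]$ over $Z$, which has no left or right options.

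\emph{Atomic case.} Suppose $G\identical g$ is atomic. Then $\phi(G)=[\phi(g)]$ and $\psi(G)=[\psi(g)]$ are atomic.
For (A), the hypothesis $\top\leqint[\phi(g)]$ reduces, by clause 3 of the definition, to $\top\leq_Z\phi(g)$. Hence $\phi(g)=\top$. Since $\phi\leq\psi$, we get $\top=\phi(g)\leq\psi(g)$, so $\top\leqint\psi(G)$.
For (B), the relation $\top\tri[\phi(g)]$ can never hold, since neither side has options. So (B) holds vacuously.

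\emph{Composite case, claim (A).} Suppose $G$ is composite, so $\phi(G)=\gc{\phi(G^L)}{\phi(G^R)}$ and likewise for $\psi$. Since $\top$ has no left options and the pair is not both atomic, $\top\leqint\phi(G)$ means exactly that $\top\tri\phi(G^R)$ for every right option $G^R$ of $G$. By part (B) of the induction hypothesis applied to $G^R$, we get $\top\tri\psi(G^R)$ for every $G^R$. The right options of $\psi(G)$ are exactly the games $\psi(G^R)$, so $\top\leqint\psi(G)$.

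\emph{Composite case, claim (B).} Since $\top$ has no right options, $\top\tri\phi(G)$ means there is a left option $\phi(G^L)$ of $\phi(G)$ with $\top\leqint\phi(G^L)$. By part (A) of the induction hypothesis, $\top\leqint\psi(G^L)$. Since $\psi(G^L)$ is a left option of $\psi(G)$, this gives $\top\tri\psi(G)$.

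The main obstacle is recognizing that the tempting shortcut via \cref{fun_equiv} fails for lack of transitivity. Once one commits to a direct induction on $G$ alone, with the left-hand side fixed as the option-free atom $\top$, each step is a one-line unfolding of the definitions. The only care needed is to handle correctly the clause about atomic pairs, which applies only in the atomic case.
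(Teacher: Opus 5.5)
Your proof is correct and follows the paper's argument: a simultaneous induction on $G$. The atomic case of (A) uses $\top\leq\phi(g)\leq\psi(g)$ in $Z$, and the composite cases unfold $\leqint$ and $\tri$ (using that $\top$ has no options) and then apply part (B), respectively part (A), of the induction hypothesis to the options. Your explicit handling of the vacuous atomic case of (B), and your remark that the shortcut through \cref{fun_equiv} fails for lack of general transitivity, are small additions the paper does not spell out.
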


\begin{proof}
  Assume $G$ is a game over $A$ and $\phi$ and $\psi$ are defined as above.
  We will prove both claims using induction on $G$.

  First we will prove claim (A).
  If $g$ is atomic, then $\top\leq\psi(g)$ by the transitivity of posets.
  Assume $G$ is composite.
  The game $\top$ has no options, so we only need to show that $\top\tri\psi(G^R)$ for all $G^R$ of $G$.
  Because $\top\leqint\phi(G)$, we know that $\top\tri\phi(G^R)$.
  This means that $\top\tri\psi(G^R)$ by using part (B) of the induction hypothesis.
  This is true for any $G^R$ of $G$, therefore $\top\leqint\psi(G)$.

  Now we prove claim (B).
  Assume $\top\tri\phi(G)$.
  Because the game $\top$ has no options, there must exist some  $G^L$ of $G$ such that $\top\leqint\phi(G^L)$.
  Using part (A) of the induction hypothesis, this implies that $\top\leqint\psi(G^L)$ for that same $G^L$.
  Therefore, $\top\tri\psi(G)$.
\end{proof}
\chapter{Properties of \texorpdfstring{$\Rex$}{Rex} revisited} \label{Properties_of_Rex_revisited}
In this chapter, we will demonstrate how to extend some of the special properties of $\Rex$ (as discussed in \cref{Rex_concrete}) to arbitrary games over a poset.

\section{Parity}
\begin{definition}[Parity of a game]
  We say that a game is \emph{even} if all of its options are odd. We say that a game is \emph{odd} if it is composite and all of its options are even. A game has \emph{parity} if it is even or odd.
\end{definition}
Note that some games, such as $\gc{\bot}{\gc{\top}{\top}}$, are neither even nor odd.
\begin{lemma}\label{parity_trans}
  Let $G$, $H$, and $K$ be games over some poset $A$, and suppose $G$ and $K$ have parity. Then:
  \begin{enumerate}[(A)]
  \item If $G\leq H\leq K$, then $G$ and $K$ have matching parity.
  \item If $G\leq H\tri K$, then $G$ and $K$ have mismatching parity.
  \item If $G\tri H\leq K$, then $G$ and $K$ have mismatching parity.
  \end{enumerate}
\end{lemma}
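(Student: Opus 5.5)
The plan is a simultaneous induction on the triple $(G,H,K)$, proving (A), (B) and (C) together. For the induction measure I will use the sum of the (well-founded) ranks of $G$, $H$ and $K$; in every step below one of the three games is replaced by one of its options, so this measure decreases.

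Two observations about the definition of parity are used throughout. First, an atomic game has no options and is therefore even. Second, if a game has parity, then all of its options have parity, and of the opposite kind: options of an even game are odd, and options of an odd game are even. So whenever I pass from $G$ to $G^L$ or $G^R$ (or from $K$ to $K^L$ or $K^R$), the parity hypothesis on $G$ and $K$ is preserved and flips. Note also that $H$ is never required to have parity, so replacing $H$ by one of its options creates no obligation.

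For (A), assume $G\leq H\leq K$.
\begin{itemize}
\item If $G$ is composite, pick any left option $G^L$, which exists because option sets are non-empty. From $G\leq H$ we get $G^L\tri H$, so $G^L\tri H\leq K$. Part (C) of the induction hypothesis then says $G^L$ and $K$ have mismatching parity. Since $G^L$ has the opposite parity to $G$, it follows that $G$ and $K$ match.
\item If $K$ is composite, the dual argument applies. Pick any $K^R$; from $H\leq K$ we get $H\tri K^R$, so $G\leq H\tri K^R$. Part (B) of the induction hypothesis gives that $G$ and $K^R$ mismatch, hence $G$ and $K$ match.
\item If both $G$ and $K$ are atomic, both are even, and the claim is immediate.
\end{itemize}

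For (B), assume $G\leq H\tri K$. By the definition of $\tri$ there are two cases.
\begin{itemize}
\item If $H\leq K^L$ for some $K^L$, then $G\leq H\leq K^L$. By (A) of the induction hypothesis, $G$ matches $K^L$, so $G$ mismatches $K$.
\item If $H^R\leq K$ for some $H^R$, the useful move is to not compare $G$ with $H^R$ directly. Instead, use $G\leq H$ to get $G\tri H^R$. This gives $G\tri H^R\leq K$, with $H$ replaced by the smaller $H^R$, and (C) of the induction hypothesis yields mismatching parity.
\end{itemize}

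Part (C) is dual. Assume $G\tri H\leq K$.
\begin{itemize}
\item If $G^R\leq H$, then $G^R\leq H\leq K$. By (A), $G^R$ matches $K$, so $G$ mismatches $K$.
\item If $G\leq H^L$, then $H\leq K$ gives $H^L\tri K$. So $G\leq H^L\tri K$, and (B) of the induction hypothesis applies.
\end{itemize}

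The only delicate point is the second case of (B) and of (C). There, $H$ has no parity assumption, so one cannot reason about $H^R$ or $H^L$ directly. The fix is to route the $\leq$ relation involving $G$ (respectively $K$) through that option, so that the parity-carrying games remain $G$ and $K$. Everything else is bookkeeping: checking that the required options exist (composite games have non-empty option sets, and the $\tri$ witnesses are given) and that the measure decreases.
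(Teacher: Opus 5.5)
Your proof is correct and follows the paper's argument: a simultaneous induction on $(G,H,K)$ with the same case splits, in which the relation is routed through $G^L$, $K^R$, $H^R$ or $H^L$ so that only $G$ and $K$ ever need to have parity. In the composite-$G$ case of (A) you cite the correct clause, (C) for $G^L\tri H\leq K$; the paper's text cites (B) there, which appears to be a typo.
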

\begin{proof}
  Let $G$, $H$, and $K$ be as described above.
  We will prove all four claims using induction on $G$, $H$, and $K$.

  First we prove claim (A). Suppose $G\leq H\leq K$. If $G=[g]$ and $K=[k]$ are both atomic, then they automatically have matching parity. Then assume $G$ or $K$ is composite. If $G$ is composite, then we have $G^L\tri H\leq K$. By part (B) of the induction hypothesis we know $G^L$ and $K$ have mismatching parity. We know $G$ and $G^L$ have mismatching parity, so $G$ and $K$ have matching parity, as desired. Similar reasoning applies when $K$ is composite.

  Now we will prove claim (B). Suppose that $G\leq H \tri K$.
  Because $H\tri K$, either there exists some $H^R$ of $H$ such that $H^R\leq K$ or there exists some $K^L$ of $K$ such that $H\leq K^L$.

  If there is some $H^R$ of $H$ such that $H^R\leq K$, then $G\tri H^R\leq K$.
  Then, by part (C) of the induction hypothesis, we know that $G$ and $K$ have mismatching parity.
  If instead there is some $K^L$ such that $H\leq K^L$, then we have that $G\leq H\leq K^L$.
  Using part (A) of the induction hypothesis we get that $G$ and $K^L$ have matching parity.
  Then $G$ and $K$ must have mismatching parity, as desired.

  The proof of claim (C) is similar to that of claim (B).
\end{proof}

From this we get the following corollary:

\begin{cor}\label{parity}
  If $G\leqint H$ and both have parity, then they have matching parity. If $G\tri H$ and both have parity, then they have mismatching parity.\end{cor}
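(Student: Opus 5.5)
The corollary follows from \cref{parity_trans} by choosing the middle or outer games appropriately, using reflexivity of $\leqint$ on the relevant games. I take $H$ itself as the third game and invoke \cref{parity_trans} in the form $G\leqint H\leqint H$ or $G\tri H\leqint H$. Here $G$ and $H$ both have parity by hypothesis, so the outer games satisfy the parity hypotheses of the lemma.

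\textbf{Step 1: reflexivity.} I first establish that $H\leqint H$ for every game $H$ over $A$. By \cref{comp_int}(A), $H\leqint H$ holds if and only if Left has a second player winning strategy in $\lcomp{H}{H}$ in which Left makes the last move. The copycat strategy of \cref{prop:copycat} provides this. Left only ever answers Right's moves, so she always moves after Right, and in particular she makes the last move. Alternatively, I can prove reflexivity directly by induction on $H$:
\begin{itemize}
\item If $H$ is atomic, then $h\leq_A h$ holds in the poset.
\item Otherwise, for every $H^L$ we have $H^L\tri H$, witnessed by the left option $H^L$ of $H$ together with $H^L\leqint H^L$ from the induction hypothesis.
\item Dually, for every $H^R$ we have $H\tri H^R$, witnessed by the right option $H^R$ of $H$ together with $H^R\leqint H^R$.
\end{itemize}
The direct induction is the safer route, since it avoids worrying about who moves last in the copycat strategy.

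\textbf{Step 2: the $\leqint$ case.} Suppose $G\leqint H$. Then $G\leqint H\leqint H$, and \cref{parity_trans}(A), applied with $K=H$, shows that $G$ and $H$ have matching parity.

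\textbf{Step 3: the $\tri$ case.} Suppose $G\tri H$. Then $G\tri H\leqint H$, and \cref{parity_trans}(C) shows that $G$ and $H$ have mismatching parity. Equivalently, one could write $G\leqint G\tri H$ and apply part (B).

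The only real obstacle is Step 1. Reflexivity is not stated explicitly earlier in the paper, and the paper warns that $\leqint$ is not transitive in general. However, reflexivity does not depend on transitivity, and the short induction above settles it.
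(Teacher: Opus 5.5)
Your proof is correct and is the intended derivation. The paper offers only ``From this we get the following corollary'' after \cref{parity_trans}, meaning exactly the instantiation $K=H$ with $H\leqint H$ that you give. Your short induction for reflexivity supplies the one detail the paper leaves implicit.
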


This corollary also holds with respect to the contextual order:

\begin{lemma}\label{matching_parity_contextual}
  Suppose $G$ and $H$ are games with parity.
  If $G\contri H$, then $G$ and $H$ have mismatching parity.
  If $G\leqc H$, then $G$ and $H$ have matching parity.  \end{lemma}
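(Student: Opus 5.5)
We prove the contrapositive by building a specific context that separates games of different parity. The idea is that a context with a fixed, known parity can make the winner depend only on who moves last. Then $G+X$ and $H+X$ have their parity behaviour controlled by the parities of $G$ and $H$.

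\textbf{The context.} Let $p\colon A\to\bool$ and $q\colon A\to\bool$ be the constant monotone maps, $p(a)=\top$ and $q(a)=\bot$ for all $a$; both are elements of $\fun{A}$. A context over $\fun{A}$ only receives an element of $A$ at the very end through $\varepsilon$. So any context built solely from the atoms $p$ and $q$ yields a sum $G+X$ whose outcome does not depend on the atoms of $G$. It depends only on the play sequence, and hence only on parities.

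First we would show a small parity lemma by induction on games over $A$. If $G$ is even (respectively odd), then every complete play of $G$, i.e.\ every path from $G$ to an atom, has even (respectively odd) length. The definition of parity forces alternation, and atoms are even.

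Take as context $X$ a game of some chosen parity in which the final atom is decided by who makes the last move of $G+X$. Since lengths of plays are fixed by parity, the total number of moves in $G+X$ is determined by the parities of $G$ and $X$. A convenient choice is to make $X$ itself a mechanism recording the last mover. Take
\[X_0=\gc{\,[q]\,}{\,[p]\,},\]
together with atoms $[p]$ and $[q]$ and compositions of such games. The worry is that the final atom of $X$ records who played the last move \emph{in $X$}, not in the sum. A cleaner route: let $X$ be over $\fun{A}$ with atoms $p,q$, and note that the outcome of $G+[c]$ for an atom $c$ is just constant. So I would instead use $X_1=\gc{[p]}{[q]}$, a single odd move that is good for whoever makes it. Players will want to delay moving in $G$, so this sum behaves like a ``last move in $X_1$'' game. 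Because all plays of $G$ have fixed length parity, a player can be forced to move in $X_1$ exactly according to the parity of $G$.

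\textbf{Outcome computation.} Concretely, by induction on $G$ we would prove:
\begin{itemize}
\item if $G$ is even, then $o(G+X_1)=\gn$;
\item if $G$ is odd, then $o(G+X_1)=\gp$.
\end{itemize}
The second case needs care when $G$ is atomic, where $G+X_1$ simply lets the first mover take $X_1$. Then even gives $\gn$, consistent. Odd composite $G$: the first player either takes $X_1$ and loses, or moves in $G$ to an even option, where by induction the opponent wins moving first.

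\textbf{Conclusion.} If $G\leqc H$ with $G$ even and $H$ odd, this context gives $\gn\le\gp$, which is false. Symmetrically, if $G$ is odd and $H$ even, we get $\gp\le\gn$, also false. For $G\contri H$ we need $o_R(G+X_1)\le o_L(H+X_1)$. If $G$ and $H$ have matching parity, then both even gives $\top\le\bot$ using $o_R(\gn)=\bot$. Wait — $o_R(\gn)=\bot\le\anything$ holds, so that case fails to separate. For that case we would use the dual context $\gc{[q]}{[p]}$ (mis\`ere-like), or add one extra neutral move to shift parity, so that $o_R=\top$ and $o_L=\bot$ appear.

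\textbf{Main obstacle.} The main obstacle is choosing a single context for each case and checking the induction cleanly, especially the atomic boundary cases. I would expect to tune $X$ as $X_1$ or $X_1$ plus a dead-cell-like $\gc{[p]}{[p]}$ per case.
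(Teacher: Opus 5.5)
\textbf{There is a genuine gap: the context $X_1=\gc{[p]}{[q]}$ that you actually use cannot detect parity.}

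Left's only move in $X_1$ produces the constant-$\top$ atom, and Right's only move produces the constant-$\bot$ atom. So whoever first moves in $X_1$ fixes the outcome in their own favour, whatever happens in $G$. The first player therefore takes $X_1$ at once, and $o(G+X_1)=\gn$ for \emph{every} $G$, whether even or odd, atomic or composite.

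Your claim that $o(G+X_1)=\gp$ for odd $G$ is false. For example, with $G=\gc{a}{a}$, Left opens in $X_1$, Right must then move in $G$, and the final value is $p(a)=\top$. Your justification, ``the first player either takes $X_1$ and loses,'' contradicts your own description of $X_1$ as good for whoever takes it. Since the outcome table is wrong, neither the $\leqc$ claim nor the $\contri$ claim follows. Your last paragraph, which notices one case failing and leaves the choice of context ``to be tuned,'' confirms the argument was never closed.

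\textbf{The fix is the context you discarded.} Take $X_0=\gc{[q]}{[p]}$, which is exactly the paper's $\gc{\bot}{\top}$. Your worry about it is unfounded. Moving in $X_0$ loses for the mover, so neither player touches it until $G$ is exhausted. By your (correct) observation that every complete play of an even (odd) game has even (odd) length, the player forced into $X_0$ is determined by the parity of $G$.

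A short induction makes this precise. For even $G$ (including atoms), the mover either plays in $X_0$ and loses, or moves to an odd option where the opponent wins. Hence $o(G+X_0)=\gp$ for even $G$ and $o(G+X_0)=\gn$ for odd $G$.

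\textbf{How this finishes the proof.}
\begin{itemize}
\item The $\leqc$ statement follows directly from this one context, since $\gp\not\leq\gn$ and $\gn\not\leq\gp$. This is slightly more direct than the paper. The paper derives the $\leqc$ claim from the $\contri$ claim by noting that $G\leqc H$ implies $G\contri H^R$ when $H$ is composite, and dually $G^L\contri H$ when $G$ is composite.
\item For $\contri$ with both games even, $X_0$ gives $o_R(G+X_0)=\top\not\leq\bot=o_L(H+X_0)$.
\item For $\contri$ with both games odd, you need a parity-shifted context such as $\gc{X_0}{X_0}$, under which odd $G$ yields $\gp$. This is precisely the paper's $\gc{\gc{\bot}{\top}}{\gc{\bot}{\top}}$.
\end{itemize}
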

\begin{proof}
  Let $G$ and $H$ be defined as above.

  First, suppose $G\contri H$.
  We want to show that $G$ and $H$ cannot have matching parity.
  Suppose that both $G$ and $H$ are even.
  Let $X=\gc{\bot}{\top}$, where $\bot$ and $\top$ are both constant functions which send everything in $A$ to $\bot$ and $\top$, respectively.
  Clearly the first player to play in $X$ loses.
  Then because both games are even, Left always has a second player winning strategy in $G+\gc{\bot}{\top}$ and Left never has a first player winning strategy in the game $H+\gc{\bot}{\top}$, contradicting our assumption.
  For similar reasons using the game $X=\gc{\gc{\bot}{\top}}{\gc{\bot}{\top}}$, we can prove that $G$ and $H$ cannot both be odd.
  This means $G$ and $H$ must have mismatching parity.

  Now, suppose $G\leqc H$.
  When both games are atomic, they immediately have matching parity.

  First, assume $H$ is composite.
  We must have that $o_R(G+X)\leq o_L(H^R+X)$ for all right options of $H$.
  By the proof of the first claim, $G\contri H^R$ implies that $G$ and $H^R$ have mismatching parity. The games $H^R$ and $H$ must have mismatching parity by definition, therefore $G$ and $H$ have matching parity.

  The proof is dual when $G$ is composite and $h$ is atomic.\end{proof}

When turning a $\Rex$ position into a game, the parity of the position is determined by whether there is an even or odd number of empty cells remaining in the position.
Because of this, we have the following corollary:
\begin{cor}
  Let $G$ be a game that is intrinsically equivalent to a position in $\Rex$. Then $G$ must have parity.
\end{cor}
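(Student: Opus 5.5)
The plan is to proceed by induction on $G$, with the Rex position carried along as a second parameter. Concretely, let $P$ be the game form of a $\Rex$ position, and suppose $G\leqint P$ and $P\leqint G$. The facts about $P$ I will use are these: $P$ is even or odd according to the number of empty cells, and \emph{every follower} of $P$ is again the game form of a $\Rex$ position. Consequently all followers of $P$ have parity, and that parity alternates along every move.

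The target is a strengthened, one-sided statement that the induction can carry. I would take it as follows. Whenever $P_1\leqint G\leqint P_2$ with $P_1,P_2$ game forms of $\Rex$ positions, $G$ has parity, and it is the parity of $P_1$ and $P_2$; these two agree by \cref{parity_trans}(A), whose hypotheses involve only the outer games. Alongside this goes a companion claim for $\tri$: whenever $P_1\tri G'$, or $G'\tri P_2$, the relevant parity of $G'$ is opposite to that of $P_1$, respectively $P_2$. The two claims are proved simultaneously, as in \cref{parity_trans}. The desired corollary is then the special case $P_1=P_2=P$.

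The steps, in order, are these.
\begin{enumerate}
\item \textbf{Atomic $G$.} If $G=[g]$ and $P_1$ is composite, then $P_1\leqint g$ forces $P_1^L\tri g$. This means some $P_1^{LR}\leqint g$, since $g$ has no options. I would recurse on this until an atom is reached and count parity along the way, concluding that $P_1$ must be even. This mirrors the atomic case of \cref{matching_parity_contextual}.
\item \textbf{Composite $G$, the upper side.} For each option $G^L$, the inequality $G\leqint P_2$ gives $G^L\tri P_2$. Unfolding, either some $G^{LR}\leqint P_2$, or $G^L\leqint P_2^L$ for some option $P_2^L$, which is again a $\Rex$ position.
\item \textbf{Composite $G$, the lower side.} From $P_1\leqint G$ I obtain $P_1\tri G^R$ for every right option. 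I then need a lower bound on $G^L$ by a $\Rex$ position in order to close the sandwich around $G^L$. The dual derivation gives a lower bound only on $G^R$, and an upper bound only on $G^L$.
\end{enumerate}

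The main obstacle is exactly this mismatch in step 3. The two one-sided pieces of information attach to different options, so the invariant ``$G'$ is sandwiched between two $\Rex$ positions'' is not directly inherited by the options of $G$. My first attempt would be to strengthen the hypothesis to cover the mixed configurations $P_1\leqint G'\tri P_2$ and $P_1\tri G'\leqint P_2$, and to prove all three claims jointly. The aim is to mimic the case split of \cref{parity_trans}(B),(C), so that the unfolding always leaves a chain whose two ends are followers of $P$ and hence have parity. If this does not close, the fallback is to use genuinely $\Rex$-specific input. The idea would be that premotivity of $P$ (\cref{premotive}) together with transitivity through $P$ lets one replace a one-sided bound by a two-sided one. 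In that case the proof would have to wait until those properties are established in \cref{Properties_of_Rex_revisited}.
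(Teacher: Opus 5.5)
The obstacle you isolate in step~3 cannot be overcome, because the statement is false as written. Both your sandwich invariant and its companion claim for $\tri$ fail. Every $G^L$ does satisfy $G^L\tri P_2$, but that relation is witnessed by a single right option of $G^L$ (or a single left option of $P_2$) and constrains nothing else inside $G^L$. Meanwhile $P_1\leq G$ bounds the left options of $G$ from below only existentially, through $P_1^L\tri G$. Concretely, work over $\bool$ and take the atomic Rex position $P=\top$ (one cell holding a white stone). Let $H=\{\bot\mid\bot,\{\bot\mid\bot\}\}$ and $G=\{\{\top\mid\top\},\,H\mid\{\top\mid\top\}\}$. Then $G\leq\top$, because both left options have a right option ($\top$, respectively $\bot$) that is $\leq\top$, so both are $\tri\top$. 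Also $\top\leq G$, because the only right option $\{\top\mid\top\}$ has the left option $\top\geq\top$. So $G\inteq\top$. Yet $H$ has the even option $\bot$ and the odd option $\{\bot\mid\bot\}$, so neither $H$ nor $G$ has parity. In particular, $H\tri\top$ already refutes your companion claim.

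The premotivity fallback fails on the same example. $G$ is premotive: in each comparison game Left opens by taking the dual of a right option (e.g.\ $\op{G}\to\op{\{\top\mid\top\}}$) and then answers Right's move. Moreover, $G$ is even contextually equivalent to $\top$. Indeed $H\leqc\{\top\mid\top\}$ (drop a right option, raise the atoms), so \cref{dom_equiv} gives $G\conteq\{\{\top\mid\top\}\mid\{\top\mid\top\}\}=\top+*+*\conteq\top$.

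The paper offers no real proof here. It presents the statement as following from \cref{parity} and \cref{matching_parity_contextual}, but those results assume both games already have parity. All they give is that \emph{if} $G$ has parity, then it has the parity of the Rex position. A true version needs a hypothesis on $G$ itself, such as $*$-antimonotonicity, which every Rex position has. Under that hypothesis $G$ has parity by the lemma in \cref{SAM_game}, and \cref{parity} supplies the matching. Without some such hypothesis, no induction can succeed.
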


\begin{lemma}\label{bot_G_top}
  Let $G$ be an even game over a poset $A$. Then $\bot\leq G\leq \top$.
\end{lemma}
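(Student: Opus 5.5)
We prove the two inequalities $\bot\leq G$ and $G\leq\top$ by strengthening to a statement about both parities, using induction on games over posets. It is the auxiliary odd case that makes the induction go through. The plan is to prove simultaneously, for every game $G$ over $A$ with parity:
\begin{enumerate}[(i)]
\item if $G$ is even, then $\bot\leqint G$ and $G\leqint\top$;
\item if $G$ is odd, then $\bot\tri G$ and $G\tri\top$.
\end{enumerate}
By duality (passing to $\op{G}$ over $\op{A}$ swaps $\bot$ and $\top$ and reverses $\leqint$ and $\tri$), it suffices to handle the $\top$-side claims $G\leqint\top$ and $G\tri\top$. The $\bot$-side claims are symmetric, which we would state rather than reprove.

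First take $G$ even. If $G\identical[g]$ is atomic, clause 3 of the intrinsic order reduces to $g\leq_A\top$, which holds since $\top$ is the greatest element. If $G$ is composite, we must check the definition of $G\leqint\top$. The atom $\top$ has no right options, so clause 2 is vacuous. Clause 3 does not apply, since $G$ is not atomic. Clause 1 requires $G^L\tri\top$ for every left option $G^L$. Each $G^L$ is odd because $G$ is even, so this is exactly the induction hypothesis (ii) applied to $G^L$.

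Next take $G$ odd. By definition $G$ is composite and has at least one right option $G^R$, since option sets are non-empty. That $G^R$ is even, so by hypothesis (i) we have $G^R\leqint\top$. The first disjunct of the definition of $\tri$ then gives $G\tri\top$. Dually, $\bot\tri G$ follows from some $G^L$ being even with $\bot\leqint G^L$.

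The only delicate point, and the step I would double check, is that the parity hypotheses propagate correctly: options of even games are odd, and options of odd games are even. We also need odd games to be composite with non-empty option sets, which is exactly what makes the existential witness in the $\tri$ step available. This is why oddness must be included in the inductive statement. An odd game $G$ cannot satisfy $G\leqint\top$ by itself, because parity mismatches with the atom $\top$ (cf.\ \cref{parity}). Once the joint statement is set up, the induction on games over posets closes it immediately, and specialising to even $G$ yields $\bot\leq G\leq\top$.
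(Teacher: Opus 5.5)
Your proof is correct and is essentially the paper's argument. The paper inducts directly on even $G$: each right option $G^R$ is odd and therefore has an even left option $(G^R)^L$, so the induction hypothesis gives $\bot\leq (G^R)^L$ and hence $\bot\tri G^R$, with $G\leq\top$ dual. Your odd-game clause (ii) makes that two-level descent part of the inductive statement, which has the small advantage of matching the paper's one-step induction principle on options exactly.
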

\begin{proof}
  First, we note that this statement is true by definition when $g$ is atomic.

  We will only prove that $\bot\leq G$, as the proof that $G\leq \top$ is dual.
  Assume $G$ is composite and even.
  Let $G^R$ be a right option of $G$.
  Because $G^R$ is odd, there must be a left option $(G^R)^L$ of $G^R$.
  By the induction hypothesis, $\bot\leq (G^R)^L$.
  Then $\bot\tri G^R$.
  This is true for all right options of $G$.
  The game $\bot$ has no left options.
  Therefore $\bot\leq G$ as desired.
\end{proof}

\section{\texorpdfstring{$*$}{*}-Antimonotonicity}\label{SAM_game}
Recall that $\deadset=\{\,0\,\}$ is the one-element poset.
As discussed in \cref{Background:Posets}, for any poset $A$ we have that ${A\times\deadset} \cateq A$.
Because these are isomorphic, we usually say that $(a,0)=a$.
Let $[0]$ be the unique atomic game over $\deadset$.
Then for all games $G$ we have $G=G+0$.

We also use $*$ to denote the game $\gc{0}{0}$.
Recall the definition of a dead cell from \cref{concrete_*_antimonotonicity}: A cell whose state does not affect the outcome of the game.
Since a black stone is equivalent to a white stone in this cell, the outcome poset should be $\deadset$. 
Then the game $*$ represents a single dead cell.
We have that $*+*\inteq 0$ as well, which can be checked by hand.
\begin{remark}
  If $G$ is even, then $G+*$ is odd, and vice versa.  Also, since $*+*\inteq 0$ we have $G+*+*\inteq G$ by \cref{sum_preserves_order}. 
\end{remark}

By mirroring what we did in \cref{concrete_*_antimonotonicity} we have the following definition:
\begin{definition}[$*$-antimonotonicity]
  A game $G$ is \emph{locally $*$-antimonotone} if \[G^L+*\leq G\leq G^R+*\] for all left and right options of $G$.
  A game is \emph{$*$-antimonotone} if it and all of its followers are locally $*$-antimonotone.
\end{definition}

In \cref{concrete_*_antimonotonicity} it was shown that $\Rex$ positions are $*$-antimonotone, and a counterexample to the claim $G^L\leq G\leq G^R$ was given. In fact, by \cref{parity} we know that $G^L\nleq G\nleq G^R$ for \emph{all} left and right options of $G$ when $G$ has parity. 

\begin{lemma}
  If $G$ is a $*$-antimonotone game, then $G$ has parity.
\end{lemma}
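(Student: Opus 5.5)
The plan is to argue by induction on games over posets (the induction theorem stated earlier), using the fact that every follower of a $*$-antimonotone game is again $*$-antimonotone, so the induction hypothesis gives parity for every option of $G$. If $G\identical[g]$ is atomic, then $G$ has no options, so all of its options are (vacuously) odd and $G$ is even. So the real work is the composite case $G=\gc{G^L}{G^R}$, where by hypothesis every $G^L$ and every $G^R$ has parity.

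The difficulty is that we cannot apply \cref{parity} directly to the inequalities $G^L+*\leq G\leq G^R+*$, because that corollary requires both games to have parity, and we do not yet know this for $G$. Instead I will use \cref{parity_trans}(A), which only requires the two \emph{outer} games to have parity, and allows the middle game $H$ to be arbitrary. By the remark following the definition of $*$, since $G^L$ has parity, $G^L+*$ has parity opposite to that of $G^L$. Similarly $G^R+*$ has parity opposite to that of $G^R$.

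Now fix any left option $G^L$ and any right option $G^R$. Local $*$-antimonotonicity of $G$ gives
\[G^L+*\leq G\leq G^R+*.\]
By \cref{parity_trans}(A), the games $G^L+*$ and $G^R+*$ have matching parity. Hence $G^L$ and $G^R$ have matching parity.

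Since $G$ is composite, both sets of options are non-empty. Because every left option matches every right option in parity, all options of $G$ share a single common parity. If that parity is odd, then $G$ is even by definition. If it is even, then $G$ is odd, since $G$ is composite and all of its options are even. Either way $G$ has parity, which completes the induction.

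I expect the only delicate point to be noticing that \cref{parity_trans}(A) needs parity only at the endpoints, which sidesteps the circularity. One should also check that the remark about $G+*$ applies to the options. It does, because $G^L+*$ is composite: its options are $G^L+0$, which is identified with $G^L$, together with $(G^L)'+*$ for the options $(G^L)'$ of $G^L$. This matches the parity-flip claim recorded in the remark.
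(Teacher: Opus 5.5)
Your proof is correct and follows the paper's argument essentially step for step: induction on $G$, then applying \cref{parity_trans}(A), which needs parity only at the endpoints, to $G^L+*\leq G\leq G^R+*$. This gives matching parity for $G^L+*$ and $G^R+*$, hence for $G^L$ and $G^R$, so all options of the composite game $G$ share one parity.
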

\begin{proof}
  We will prove our claim using induction on $G$.

  All atomic games are even, and hence have parity. Then assume $G$ is a composite $*$-antimonotone game.
  By the induction hypothesis, all options of $G$ have parity. In fact, all $G^L+*$ and all $G^R+*$ have parity. Then by \cref{parity_trans}, the inequality $G^L+*\leq G\leq G^R+*$ implies that, for all $G^L$ and all $G^R$ of $G$, we have $G^L+*$ and $G^R+*$ have matching parity. Then $G^L$ and $G^R$ have matching parity. This means that all options of $G$ have the same parity. Therefore $G$ has parity, as desired.
\end{proof}

\section{Premotivity}

\begin{definition}[Premotivity]
  A game $G$ is \emph{locally premotive} if Left has a first player winning strategy in the comparison game $\lcomp{G}{G}$ (or dually if Right has a first player winning strategy in the game $\rcomp{G}{G}$).
A game is \emph{premotive} if it and all of its followers are locally premotive.\end{definition}

This is a property that all set coloring games have, as seen in \cref{rex_premotivity}.
Premotivity is also preserved by addition, a property that will be useful to us later.

\begin{proposition}\label{sum_premotive}If $G$ and $H$ are premotive, then $G+H$ is premotive.\end{proposition}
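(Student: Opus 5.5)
The plan is to reduce the claim to a statement about conjunctions of comparison games over $\bool$, and then combine strategies componentwise.

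\textbf{Reduction to local premotivity.} Every follower of $G+H$ has the form $G'+H'$, where $G'$ is a follower of $G$ and $H'$ is a follower of $H$; when one summand is atomic, it simply stays put. Followers of premotive games are premotive. So it suffices to show that $G+H$ is \emph{locally} premotive whenever $G$ and $H$ are premotive. That is, Left must have a first player winning strategy in $\lcomp{(G+H)}{(G+H)}$.

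\textbf{Rewriting the comparison game.} Since $\op{(A\times B)}=\op{A}\times\op{B}$ and the order on $A\times B$ is componentwise, we have
\[\lambda(\op{(a,b)},(a',b'))=\emph{and}\bigl(\lambda(\op{a},a'),\lambda(\op{b},b')\bigr).\]
The moves of $\op{(G+H)}+(G+H)$ are exactly the moves of $(\op{G}+G)+(\op{H}+H)$, using associativity and commutativity of the sum up to identity of game trees. Hence
\[\lcomp{(G+H)}{(G+H)} \identical \bigl(\lcomp{G}{G}\bigr) +_{\emph{and}} \bigl(\lcomp{H}{H}\bigr).\]
This identity is checked by a routine induction on the definitions of sum, dual and map.

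\textbf{Key lemma on conjunctions.} Let $P$ and $Q$ be games over $\bool$.
\begin{enumerate}[(i)]
\item If Left has second player winning strategies in both $P$ and $Q$, then she has one in $P+_{\emph{and}}Q$.
\item If Left has a first player winning strategy in $P$ and a second player winning strategy in $Q$, then she has a first player winning strategy in $P+_{\emph{and}}Q$, provided $P$ is composite.
\end{enumerate}
The two parts are proved by simultaneous induction.

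For (i), Right's move lands in one component, say $P^R$. Left has a first player win in $P^R$, which is composite, and a second player win in $Q$. Part (ii) of the induction hypothesis then gives a first player win in $P^R+_{\emph{and}}Q$. If both games are atomic, both atoms are $\top$, so the result is $\top$.

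For (ii), Left plays her winning move $P\to P^L$ and then applies (i) to $P^L$ and $Q$.

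\textbf{Finishing the proof.} Apply the lemma with $P=\lcomp{G}{G}$ and $Q=\lcomp{H}{H}$. Premotivity of $G$ gives Left a first player win in $P$. \cref{prop:copycat} gives her a second player win in $Q$.

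The main obstacle is the case where $P$ is atomic, i.e.\ $G$ is atomic. Then $P=\top$, and $\top +_{\emph{and}} Q$ is just $Q$ with $\emph{and}(\top,\cdot)$ applied, which is essentially $Q$. So instead of the lemma, we use the first player win in $Q$ supplied by premotivity of $H$. If both $G$ and $H$ are atomic, the game is $\top$ and there is nothing to prove.

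The remaining care is bookkeeping in the sum definition when one component becomes atomic midway through play. There, moves exist only in the other component, and the induction in the lemma already covers this: Right's options are only in the composite part, and the atomic part is a second player win exactly when it equals $\top$.
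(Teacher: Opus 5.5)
Your skeleton matches the paper's: you reduce to local premotivity, rewrite $\lcomp{(G+H)}{(G+H)}$ as $(\lcomp{G}{G})+_{\mathit{and}}(\lcomp{H}{H})$, and play a first-player win on the $G$-side against a copycat second-player win on the $H$-side. But your key lemma is false, and it breaks exactly at the point you dismiss as bookkeeping.

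In the proof of (i) you assert that $P^R$ is composite, and nothing guarantees this. If Right's move exhausts $P$, then $P^R=\top$, and it is Left who must now move first in $Q$, where she is only known to win as second player. For example, take $P=Q=\gc{\bot}{\top}$. Left wins each component as second player, but in $P+_{\mathit{and}}Q$ Right moves $P$ to $\top$ and Left is forced to move $Q$ to $\bot$.

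This is not just an over-general formulation. When $G$ is composite your argument never uses premotivity of $H$, and it cannot succeed without it. Take the premotive game $G=\gc{\bot}{\top}$ and the game $H=\gc{\gc{\bot}{\bot}}{\gc{\top}{\top}}$ of \cref{eq:not-trans}, which is not locally premotive. Then $P=\lcomp{G}{G}$ and $Q=\lcomp{H}{H}$ satisfy the hypotheses of (ii), yet Left has no first-player win in $\lcomp{(G+H)}{(G+H)}$. The $H$-comparison is lost exactly when Left makes the first move in both $\op{H}$ and $H$. Right forces this by always completing what Left has just started: the other move of $\lcomp{G}{G}$, or the second move in the copy of $H$ that Left just entered.

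What is missing is an invariant on the $H$-side that is still usable when the $G$-side runs out. The paper gets it by having Left answer Right's moves on the $H$-side by copying, so that side always has the form $\lcomp{H'}{H'}$ for a follower $H'$ of $H$. When Right's move finishes the $G$-side (necessarily at $\top$), Left opens $\lcomp{H'}{H'}$ using local premotivity of $H'$. This is where premotivity of every follower of $H$, not just of $H$, is needed.

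Your lemma can be repaired accordingly. Take $Q=\lcomp{H'}{H'}$ with $H'$ premotive, and answer Right's moves in $Q$ by copying, recursing on ${H'}^L$ or ${H'}^R$. Drop the requirement in (ii) that $P$ be composite: an atomic $P$ with a Left win equals $\top$, and then local premotivity of $H'$ supplies the first-player win. Your separate treatment of atomic $G$ is then just this case at the start of play.
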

\begin{proof}
  We start by assuming $G$ and $H$ are premotive, which implies that Left has a first player winning strategy in the games $\lcomp{G}{G}$ and $\lcomp{H}{H}$.
  By using the copycat strategy, we can show that Left also has a second player winning strategy in both games.
  We want to show that Left has a first player winning strategy in the game $\lcomp{(G+H)}{(G+H)}$.
  By the definition of the Cartesian product of posets, the game $\lcomp{(G+H)}{(G+H)}$ is identical to $\andplus{\lcomp{G}{G}}{\lcomp{H}{H}}$.

  We want to show that Left has a first player winning strategy in this game.
  Their strategy is as follows; Left sets out to play their first player winning strategy in the $\lcomp{G}{G}$ component of the game.
  If Right decides to respond in the $\lcomp{H}{H}$ component of the game, then Left responds with their copycat strategy.
  If Right plays in the $\lcomp{G}{G}$ component of the game Left will follow their second player winning strategy in $(\lcomp{G}{G})^L$.
  Play will continue until Right makes the last move in that section, after which Left will make the first move in some $\lcomp{H'}{H'}$, where $H'$ is either $H$ or some follower of $H$.
  All followers of $H$ are also premotive by definition, and so Left will have a first player winning strategy in the game $\lcomp{H'}{H'}$ as well.
  This fully describes Left's first player winning strategy in the game $\andplus{\lcomp{G}{G}}{\lcomp{H}{H}}$.
\end{proof}

We will now show that premotive games satisfy the lookahead property (see \cref{prop_x}).
\begin{proposition} If $G$ and $H$ are even games over $\bool$ and Left has a first player winning strategy in $\lcomp{G}{H}$, then a second player winning strategy in $G$ implies a first player winning strategy in $H$. \end{proposition}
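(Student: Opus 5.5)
We argue by a strategy-stealing style composition: Left plays the comparison game $\lcomp{G}{H}$ and the game $G$ "against herself" so that the result in $H$ is forced. Concretely, we prove by simultaneous induction on $G$ and $H$ the following two claims, for games over $\bool$ with $G$ and $H$ of matching parity:
\begin{enumerate}[(A)]
\item If Left has a second player winning strategy in $\lcomp{G}{H}$ and a second player winning strategy in $G$, then Left has a second player winning strategy in $H$.
\item If Left has a first player winning strategy in $\lcomp{G}{H}$ and a second player winning strategy in $G$, then Left has a first player winning strategy in $H$.
\end{enumerate}
A companion pair is needed for the mismatched situations that arise: if Left has a first player win in $G$ and a second player win in $\lcomp{G}{H}$, then she has a first player win in $H$. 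The statement is claim (B) with $G,H$ even.

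For the atomic base case, both $G=g$ and $H=h$ are atomic. Then $\lcomp{g}{h}=\top$ means $g\leq h$. Since $g=\top$, this forces $h=\top$. For the inductive step of (B), Left has a first move in $\lcomp{G}{H}$ to a position where she wins as second player. That move is either $\lcomp{G}{H^L}$ or $\lcomp{G^R}{H}$. In the first case Left plays $H^L$ in $H$ and must show she wins $H^L$ as second player. Here $G$ has a second player win and $\lcomp{G}{H^L}$ has a second player win, but $G$ and $H^L$ now have mismatched parity, so we invoke the companion claim rather than (A).

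In the second case Left "imagines" that Right moved $G\to G^R$. She has a first player win in $G^R$, since she wins $G$ as second player, and a second player win in $\lcomp{G^R}{H}$. The companion claim then yields a first player win in $H$ directly, and $G^R$ is smaller, so the induction applies. Claim (A) is handled dually: for each Right move $H^R$, Left answers in $\lcomp{G}{H^R}$ with her first player strategy and recurses. The companion claims are proved the same way, each time routing Right's moves in $G$ or $H$ through the corresponding component of $\lcomp{G}{H}$, and using the induction hypothesis on the strictly smaller pair.

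The main obstacle is the bookkeeping of parity and of whose turn it is in each component. In particular, Left's reply in $\lcomp{G}{H}$ may be a move in the $G$ coordinate, meaning Right "moves" in $G$, and that must be consistent with the strategy Left holds for $G$. Evenness is exactly what prevents the situation where one of $G$, $H$ is exhausted while Left is still required to move in it: all three games end simultaneously, and the atomic base case applies cleanly.

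I will first try to organize the whole argument as one four-case mutual induction on pairs $(G,H)$, indexed by which of Left's strategies (first or second player) are held in $G$ and in $\lcomp{G}{H}$. If this becomes unwieldy, the fallback is to phrase it as a single three-board strategy, in the spirit of the Alice/Charboblie story. In that version Left plays $H$ for real, while simulating $G$ (Left's role) and $\op{G}+H$ (Left's role). Right's real moves in $H$ are mirrored into the comparison board, and Left's comparison-board moves in the $\op{G}$ part are fed as Right's moves into the simulated $G$. The parity assumption then guarantees that the simulation never stalls.
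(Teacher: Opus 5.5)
Your skeleton matches the paper's: a simultaneous induction on pairs $(G,H)$, a case split on Left's winning first move in $\lcomp{G}{H}$, and hand-offs to mismatched-parity companion claims. The gap is in the parity bookkeeping you flagged as the main obstacle. Your induction hypotheses are attached to the wrong parities, and as stated they are false, so the induction does not close.

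\emph{Claim (A) fails for an even pair.} Take $G=\gc{\gc{\bot}{\bot}}{\gc{\top}{\top}}$ and $H=\bot$, both even. Left wins $G$ as second player, since Right must move to $\gc{\top}{\top}$. She also wins $\lcomp{G}{H}$ as second player: Right's only move is to $\lcomp{\gc{\bot}{\bot}}{\bot}$, where Left answers with $\lcomp{\bot}{\bot}=\top$. Yet she does not win $H=\bot$.

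\emph{Claim (B) fails for an odd pair.} Take $G=\gc{\top}{\top}$ and $H=\gc{\bot}{\top}$. Left wins $\lcomp{G}{H}$ moving first (to $\lcomp{\top}{H}$) and wins $G$ as second player, but she loses $H$ moving first.

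\emph{The recursion in (A) also breaks.} Your ``(A) is handled dually'' step, run on an even pair, recurses into (B) with $G$ even and $H^R$ odd. That instance is false too: use the first $G$ above with $H=\gc{\bot}{\top}$.

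\emph{The stated companion does not fit the first case of (B).} There you hold a \emph{second}-player win in $G$ and need a \emph{second}-player win in $H^L$. The companion instead assumes a first-player win in $G$ and concludes a first-player win in $H$.

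The repair is to tie each claim to a single parity configuration. This gives exactly the paper's three claims:
\begin{itemize}
\item your (B) for $G,H$ both even, which is the statement;
\item your companion for $G$ odd and $H$ even;
\item your (A) for $G$ even and $H$ odd.
\end{itemize}
The recursion then closes. The first case of the statement uses the third claim on $(G,H^L)$. The second case uses the companion on $(G^R,H)$. The companion reduces to the statement on $(G^L,H)$, via Right's comparison move to $\lcomp{G^L}{H}$. The third claim reduces to the statement on $(G,H^R)$. The fourth combination in your planned four-case scheme is never needed.

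Your fallback three-board simulation is this corrected induction unrolled, and it is sound. It maintains the invariant that whenever Left must move in $H$, the current $G'$ and $H'$ are both even, Left holds a first-player win in $\lcomp{G'}{H'}$, and she holds a second-player win in $G'$. Evenness of $G'$ makes every $G'^R$ composite, so her $G$-strategy can always answer. Evenness of $H'$ makes every $H'^L$ composite, so you never need $h=\top$ from second-player wins alone, which the counterexample above shows can fail. As written, though, your primary argument rests on false lemmas.
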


\begin{proof}
  We will prove the following three claims using joint induction on $G$ and $H$:
  \begin{enumerate}[(A)]
  \item If $G$ and $H$ are even and Left has a first player winning strategy in the game $\lcomp{G}{H}$, then the existence of a second player winning strategy for Left in $G$ implies that there is a first player winning strategy for Left in $H$.
  \item If $G$ is odd, $H$ is even, and Left has a second player winning strategy in $\lcomp{G}{H}$, then the existence of a first player winning strategy for Left in $G$ implies that there is a first player winning strategy for Left in $H$.
  \item If $G$ is even, $H$ is odd, and Left has a second player winning strategy in $\lcomp{G}{H}$, then the existence of a second player winning strategy for Left in $G$ implies that there is a second player winning strategy for Left in $H$.
  \end{enumerate}

  First, we will prove claim (A).
  Assume $G$ and $H$ are even, and that Left has a first player winning strategy in the comparison game $\lcomp{G}{H}$.
  If $g$ and $h$ are atomic, then a first player winning strategy for Left  in $\lcomp{g}{h}$ means that $g\leq h$.
  A first player winning strategy for Left in the atomic game $g$ means that $\top= g$.
  Then, by transitivity of posets, $\top = h$ as well.
  This implies Left has a second player winning strategy in the game $h$, as desired.
  
  Now assume that $G$ and $H$ are not both atomic, and that Left has a second player winning strategy in the game $G$.
  Because Left has a first player winning strategy in the game $\lcomp{G}{H}$, there must be a left option of the game in which Left has a second player winning strategy.
  
  Suppose Left's winning move is of the form $\lcomp{G^R}{H}$ for some left option $\op{G^R}$ of $\op{G}$ (recall that the left options of $\op{G}$ are of the form $\op{G^R}$).
  By assumption Left has a first player winning strategy in this game $G^R$, so by the induction hypothesis Left must have a first player winning strategy in the game $H$.
  
  Suppose instead that Left's winning move is of the form $\lcomp{G}{H^L}$ for some $H^L$ of $H$.
  Left has a second player winning strategy in the game $G$ by assumption, so by the induction hypothesis, Left must have a second player winning strategy in the game $H^L$, which implies they have a first player winning strategy in the game $H$.
  
  Next we will prove claim (B).
  Assume that $G$ is odd, that $H$ is even, that Left has a second player winning strategy in the comparison game $\lcomp{G}{H}$, and that Left has a first player winning strategy in the game $G$.
  This means there exists some $G^L$ of $G$ such that Left has a second player winning strategy in the game $G^L$.
  If Right were to move to $\lcomp{G^L}{H}$, Left must have a first player winning strategy in this game as well.
  Then by the induction hypothesis, Left has a first player winning strategy in $H$.

  The proof of (C) is similar to the proof of (B).  \end{proof}

The lookahead property states that, if some player has a second player winning strategy in an even game $G$, then they have a first player winning strategy in $G$. The previous proposition begets the following corollary:
\begin{cor}\label{o(G)_not_P} Let $H$ be a game over $\bool$. If $H$ is even and premotive and Left has a second player winning strategy in $H$, then there is a first player winning strategy for Left in $H$.\end{cor}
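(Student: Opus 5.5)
This corollary follows directly from the preceding proposition by taking $G$ to be $H$ itself. The plan is to apply that proposition with both games equal to $H$ and check its hypotheses one at a time. First, $H$ is even and is a game over $\bool$, so the pair $(H,H)$ satisfies the parity and poset requirements of the proposition.

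Second, we need that Left has a first player winning strategy in $\lcomp{H}{H}$. This is exactly local premotivity of $H$ as defined in \cref{premotive}. Since $H$ is premotive, $H$ itself is in particular locally premotive, so the hypothesis holds.

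Third, by assumption Left has a second player winning strategy in $H$, which is the $G$-side hypothesis of the proposition with $G=H$. The proposition then yields a first player winning strategy for Left in $H$, as required.

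The only point needing care, and the nearest thing to an obstacle, is that the proposition is stated with "second player winning strategy in $G$" and "first player winning strategy in $H$". It is formulated via its strengthened claim (A), whose hypotheses match ours verbatim when $G$ and $H$ are identical. So no further induction is required, and premotivity of the followers of $H$ is not even used: only local premotivity at the root enters. If one worried about atomic $H$, that case is immediate anyway, since for an atom both strategies amount to $h=\top$.
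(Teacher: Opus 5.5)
Your proposal is correct and matches the paper, which states the corollary as an immediate consequence of the preceding proposition with $G=H$; local premotivity of $H$ supplies Left's first player winning strategy in $\lcomp{H}{H}$. Your side remarks are also accurate: only local premotivity at the root is used, and the atomic case is immediate.
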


\begin{lemma}\label{cont_eq_premotivity}
  If $G\conteq H$ and $G$ is locally premotive, then $H$ is locally premotive.\end{lemma}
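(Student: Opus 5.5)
The plan is to view the comparison game $\lcomp{K}{H}$ as the game $H$ (or $\op{K}$) played in a suitable context. Then the hypothesis $G\conteq H$ lets us swap $G$ for $H$ in each of the two slots of $\lcomp{G}{G}$, one slot at a time, without losing Left's first player win.

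\emph{Step 1 (second slot).} Given a game $K$ over $A$, define $c\colon\op{A}\to\fun{A}$ by $c(\op{k})=\lambda(\op{k},-)$, the indicator of the up-set of $k$. If $k\leq k'$ then the up-set of $k'$ is contained in that of $k$, so $c$ is monotone from $\op{A}$. Let $X_K=c(\op{K})$, which is a context for games over $A$. By induction on the two games, $H+_\varepsilon X_K$ and $\lcomp{K}{H}$ are the same game up to the order of summands. Both have the same option structure, and at atoms we have $\varepsilon(h,c(\op{k}))=\lambda(\op{k},h)$. Taking $K=G$, the relation $G\leqc H$ gives $o_L(G+X_G)\leq o_L(H+X_G)$. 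Since Left has a first player win in $\lcomp{G}{G}$, she therefore has one in $\lcomp{G}{H}$.

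\emph{Step 2 (first slot).} I would first prove a small duality lemma: $K\leqc K'$ implies $\op{K'}\leqc\op{K}$. To see this, note that $\op{(K+X)}=\op{K}+\op{X}$. Also $\op{\fun{A}}\cateq\fun{\op{A}}$ via $\op{f}\mapsto \neg\circ f$ (viewed on $\op{A}$), so contexts for $\op{K}$ correspond bijectively to duals of contexts for $K$. Finally, passing to duals swaps the roles of the players and reverses the outcome order. From $H\leqc G$ (half of $G\conteq H$) this yields $\op{G}\leqc\op{H}$. Now treat $\op{G}$ as the game and $H$ as the context. Use $d\colon A\to\fun{\op{A}}$, $d(h)=\lambda(-,h)$, which is monotone, and set $Y=d(H)$. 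As in Step 1, $\op{G}+_\varepsilon Y$ is identical to $\lcomp{G}{H}$, and $\op{H}+_\varepsilon Y$ is identical to $\lcomp{H}{H}$. Applying $\op{G}\leqc\op{H}$ with context $Y$ transfers Left's first player win from $\lcomp{G}{H}$ to $\lcomp{H}{H}$. This is exactly the statement that $H$ is locally premotive.

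The main obstacle is bookkeeping rather than strategy: the identifications of the comparison games with context sums, and the duality lemma. In particular, one must check that the sum definition with its separate atomic and composite cases commutes with the map operation and with applying $\varepsilon$ after currying. This will be done by a routine simultaneous induction on both games over the four cases of \cref{sum_on_game}. The duality lemma needs the same kind of induction, showing that $o_L(\op{Z})$ is the negation of $o_R(Z)$ for games $Z$ over $\bool$ after identifying $\op{\bool}$ with $\bool$.
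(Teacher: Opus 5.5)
Your proposal is correct and takes essentially the same route as the paper, which also swaps $G$ for $H$ one slot of the comparison game at a time, using $G\conteq H$ and, dually, the contextual equivalence of the duals. You simply make explicit two things the paper's one-line argument leaves implicit: the contexts that realize the comparison games as context sums, and the duality lemma.
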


\begin{proof}
  Assume $G\conteq H$ and $G$ is locally premotive.
  Then because $G\conteq H$ (and, dually, because $\op{G}\conteq\op{H}$), we have
  \[o(\lcomp{G}{G})=o(\lcomp{G}{H})=o(\lcomp{H}{H}).\]
  Because $G$ is locally premotive, $o(\lcomp{G}{G})=\gl$, so $o(\lcomp{H}{H})=\gl$.
  Then Left has a first player winning strategy in the game $\lcomp{H}{H}$, meaning $H$ is locally premotive. \end{proof}


Recall that $\emph{and}$ and $\emph{or}$ are the usual boolean functions.
By \cref{sum_on_game} and \cref{map_on_game}, $\plusand$ and $\plusor$ denote two different ways of taking the disjunctive sum of games over $\bool$.
Specifically, to win the game $G\plusand H$, Left must win both components, whereas to win $G\plusor H$, it is sufficient for Left to win one component.

\begin{lemma}\label{sum_and_or}
  Given $G$, $H$, $J$, and $K$ games over a poset $A$, we have that
  $\top\leqint\andplus{\lcomp{G}{H}}{\lcomp{J}{K}}$ implies
  $\top\leqint\orplus{\lcomp{G}{K}}{\rcomp{H}{J}}$. \end{lemma}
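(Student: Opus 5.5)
The plan is to see both comparison games as images of one underlying game under two different monotone payoff functions, and then to apply \cref{fun_equiv_trans}. Put $W = \op{G} + H + \op{J} + K$, a game over $P = \op{A}\times A\times\op{A}\times A$. Recall that $\lcomp{G}{H}$ means $\lambda(\op{G}+H)$ and $\rcomp{H}{J}$ means $\rho(H+\op{J})$. Define $\phi,\psi\colon P\to\bool$ by
\[\phi(\op{g},h,\op{j},k) = \lambda(\op{g},h)\wedge\lambda(\op{j},k),\qquad \psi(\op{g},h,\op{j},k) = \lambda(\op{g},k)\vee\rho(h,\op{j}).\]
These are compositions of the monotone maps $\lambda$, $\rho$, \emph{and}, \emph{or} with coordinate projections, so both are monotone.

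The first step is to identify the two games in the statement with $\phi(W)$ and $\psi(W)$. For $\phi$ this follows from associativity of the sum together with \cref{map_on_game}. I would prove by induction on games that $f(G_1+G_2) \identical f\circ\pi(G_1'+G_2')$ whenever $\pi$ is the product isomorphism matching the regrouped or reordered coordinates. The point is that Definition \cref{sum_on_game} is symmetric in its two summands up to swapping the coordinates of atoms.

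For $\psi$ this gives
\[\orplus{\lcomp{G}{K}}{\rcomp{H}{J}} \identical \psi(\op{G}+K+H+\op{J}),\]
which becomes $\psi(W)$ after permuting the summands into the order $\op{G},H,\op{J},K$ and composing with the corresponding permutation of coordinates. This bookkeeping, that permuting summands commutes with the map operation up to identity of game forms, is the only slightly fiddly step. It is routine by induction on games and needs no order-theoretic facts.

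The second step is the pointwise inequality $\phi\leq\psi$. Suppose $\phi(\op{g},h,\op{j},k)=\top$, so $g\leq h$ and $j\leq k$. If $h\leq j$, then transitivity in $A$ gives $g\leq h\leq j\leq k$, so $\lambda(\op{g},k)=\top$. Otherwise $h\nleq j$, so $\rho(h,\op{j})=\top$. In either case $\psi=\top$.

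Finally, the hypothesis $\top\leqint\phi(W)$ together with $\phi\leq\psi$ gives $\top\leqint\psi(W)$ by \cref{fun_equiv_trans}(A). By the identification in the first step, this is exactly $\top\leqint\orplus{\lcomp{G}{K}}{\rcomp{H}{J}}$, which is the claim.
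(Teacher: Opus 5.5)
Your proof is correct and is essentially the paper's argument: the paper also checks the atomic case, that $g\leq h$ and $j\leq k$ imply $g\leq k$ or $h\nleq j$ (which is your pointwise inequality $\phi\leq\psi$), and then concludes by \cref{fun_equiv_trans}. Your explicit common game $W$ and the regrouping/permutation bookkeeping just spell out the step the paper leaves implicit.
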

\begin{proof}
  Assume $g$, $h$, $j$, and $k$ atoms and assume that $\top\leq\andplus{\lcomp{g}{h}}{\lcomp{j}{k}}$.
  This implies $g\leq h$ and $j\leq k$.
  We want to prove that either $g\leq k$ or $h\not\leq j$.
  
  Suppose $h\leq j$.
  Then $g\leq h\leq j\leq k$ implies $g\leq k$.
  Then we have shown that, if $g\leq h$ and $j\leq k$, then $g\leq k$ or $h\nleq j$.
  This implies that $\top\leqint\orplus{\lcomp{g}{k}}{\rcomp{h}{j}}$.

  The rest of the lemma follows by \cref{fun_equiv_trans}.
\end{proof}


\begin{lemma}\label{trans_lemma_premotive}
  Let $G$ be a game over $\bool$, and let $H$ be a premotive game over any poset.
  Then the following two statements are true:
  \begin{enumerate}[(A)]
  \item If $\top\leqint G\plusor(\rcomp{H}{H})$, then $\top\leqint G$.
  \item If $\top\tri G\plusor(\rcomp{H}{H})$, then $\top\tri G$.
  \end{enumerate} \end{lemma}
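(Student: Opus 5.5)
The plan is to prove (A) and (B) simultaneously by induction on the pair $(G,H)$, ordered by the combined size of the two games. Moving to an option of $G$ or an option of $H$ decreases this measure, and every follower of $H$ is again premotive. The idea is that $\rcomp{H}{H}$ is a component in which Right can always answer by copying and, by local premotivity, can even win moving first. So Left can never profit from it, and any winning strategy in the $\plusor$-sum must really come from $G$.

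For (A), assume $\top\leqint G\plusor(\rcomp{H}{H})$, and first suppose $G$ is composite. Let $G^R$ be any right option of $G$. Then $G^R\plusor(\rcomp{H}{H})$ is a right option of the sum, and $\top$ has no left options. So the definition of $\leqint$ gives $\top\tri G^R\plusor(\rcomp{H}{H})$, and part (B) of the induction hypothesis gives $\top\tri G^R$. Since $\top$ is atomic with no options, this holds for all $G^R$, and it is exactly what $\top\leqint G$ requires.

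Now suppose $G\identical g$ is atomic. If $g=\top$ we are done. If $g=\bot$, then since $\emph{or}(\bot,x)=x$ the sum is literally the game $\rcomp{H}{H}$ over $\bool$, so $\top\leqint\rcomp{H}{H}$. By \cref{comp_int}, Left then has a second player winning strategy in $\rcomp{H}{H}$. But $H$ is locally premotive, which in the dual formulation means Right has a first player winning strategy in $\rcomp{H}{H}$, and these two cannot both hold. (When $H$ is atomic this is immediate, since $\rho(h,\op{h})=\bot$.) So this case is impossible.

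For (B), assume $\top\tri G\plusor(\rcomp{H}{H})$. Since $\top$ has no right options, some left option $K$ of the sum satisfies $\top\leqint K$. If $K$ has the form $G^L\plusor(\rcomp{H}{H})$, part (A) of the induction hypothesis gives $\top\leqint G^L$, hence $\top\tri G$.

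The remaining case, which I expect to be the crux, is a left move inside the $\rho$-component. The left options of $\op{H}$ are the $\op{H^R}$, so such a move has the form $G\plusor\rho(H^L+\op{H})$ or $G\plusor\rho(H+\op{H^R})$. Take the first form. It has the right option $G\plusor\rho(H^L+\op{H^L})$, in which Right copies Left's move, so the definition of $\leqint$ yields $\top\tri G\plusor(\rcomp{H^L}{H^L})$. Since $H^L$ is premotive and smaller than $H$, part (B) of the induction hypothesis gives $\top\tri G$. The second form is symmetric: Right replies with $\rho(H^R+\op{H^R})$, again a right option.

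The point to check carefully is that these copy-replies really are right options of the respective games. It also needs care in the case where $G$ is atomic, where the only left moves available are in the $\rho$-component; the argument above is unaffected by this.
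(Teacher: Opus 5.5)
Your proposal is correct and follows the paper's proof essentially step for step. Both use simultaneous induction. The atomic case of (A) is ruled out by the local premotivity of $H$, and the composite case of (A) comes from part (B) applied to each $G^R$. Part (B) splits into a move in $G$, handled by (A), or a move in the $\rho$-component, which Right answers by copying so that part (B) applies to $H^L$ or $H^R$. The only cosmetic difference is in the atomic case of (A): the paper uses \cref{top_implies_win} rather than \cref{comp_int} to turn $\top\leqint g\plusor(\rcomp{H}{H})$ into a second-player win for Left, and it does not split explicitly into $g=\top$ and $g=\bot$.
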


\begin{proof}
  Let $G$ and $H$ be defined as above.
  We prove both claims using induction on games $G$ and $H$.
  
  First we prove claim (A).
  Assume that $g$ is atomic.
  We want to show that $\top\leqint g$.
  From \cref{top_implies_win} we see that $\top\leqint g\plusor(\rcomp{H}{H})$ implies Left has a second player winning strategy in the game $g\plusor(\rcomp{H}{H})$.
  Because $H$ is premotive, Right has a first player winning strategy in the game $(\rcomp{H}{H})$, so Left cannot have a second player winning strategy in it.
  Then it must be that the game $g$ is already greater than or equal to $\top$.

  Now assume that $G$ is composite.
  It must be that, for all right options $G^R$ of $G$, we have $\top\tri G^R\plusor(\rcomp{H}{H})$.
  Then by the induction hypothesis, we have that $\top\tri G^R$ for any right option $G^R$ of $G$, which implies $\top\leqint G$.

  Finally we prove claim (B).
  Assume that $\top\tri G\plusor(\rcomp{H}{H})$.
  We want to show that $\top\tri G$.
  Left has some move in either $G$ or $H$ such that $\top\leqint( G\plusor(\rcomp{H}{H}))^L$.
  If that move is of the form $\top\leqint G^L\plusor(\rcomp{H}{H})$, then by part (A) of the induction hypothesis we have that $\top\leqint G^L$, which implies $\top\tri G$.
  If Left's move is of the form $\top\leqint G\plusor(\rcomp{H^L}{H})$, then because $\op{H^L}$ is a right option of $\op{H}$ we must have that $\top\tri G\plusor(\rcomp{H^L}{H^L})$.
  Because $H$ is premotive, all of its options are as well.
  Then by part (B) of the induction hypothesis, we have that $\top\tri G$.
  Similarly, if Left's move is of the form $\top\leqint G\plusor(\rcomp{H}{H^R})$, then we get $\top\tri G\plusor(\rcomp{H^R}{H^R})$ and by part (B) of the induction hypothesis we get that $\top\tri G$.
\end{proof}


Recall from the counterexample in \cref{eq:not-trans} that the intrinsic order is not always transitive.
One of the most important consequences of premotivity is that the intrinsic order is transitive on the class of premotive games.

\begin{theorem} \label{trans_leq} Suppose $G$, $H$, and $K$ are games over the same poset and $H$ is premotive. Then the following three statements hold:
  \begin{enumerate}[(A)]
  \item If $G\leqint H\leqint K$, then $G\leqint K$.
  \item If $G\tri H\leqint K$, then $G\tri K$.
  \item If $G\leqint H\tri K$, then $G\tri K$.
  \end{enumerate}\end{theorem}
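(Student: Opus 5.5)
The plan is to reduce transitivity to a statement about comparison games and then discharge it with \cref{sum_and_or} and \cref{trans_lemma_premotive}; a direct simultaneous induction on $G$, $H$, $K$ in the style of \cref{parity_trans} is the fallback. By \cref{comparison_game_II}, the hypotheses $G\leqint H$ and $H\leqint K$ say $\top\leqint \lcomp{G}{H}$ and $\top\leqint\lcomp{H}{K}$, and the goal (A) is $\top\leqint\lcomp{G}{K}$. The natural route:
\begin{enumerate}
\item Combine the two hypotheses into $\top\leqint \andplus{\lcomp{G}{H}}{\lcomp{H}{K}}$.
\item Apply \cref{sum_and_or} with $J=H$ to get $\top\leqint \orplus{\lcomp{G}{K}}{\rcomp{H}{H}}$.
\item Apply \cref{trans_lemma_premotive}(A), with the game $\lcomp{G}{K}$ over $\bool$ in the role of that lemma's $G$ and using premotivity of $H$, to strip off the $\rcomp{H}{H}$ summand and conclude $\top\leqint\lcomp{G}{K}$.
\end{enumerate}

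Parts (B) and (C) go the same way with one $\leqint$ replaced by $\tri$. For (C), $G\leqint H\tri K$ gives $\top\leqint\lcomp{G}{H}$ and $\top\tri\lcomp{H}{K}$. Then I need $\top\tri\andplus{\lcomp{G}{H}}{\lcomp{H}{K}}$, a $\tri$-variant of \cref{sum_and_or}, and finally \cref{trans_lemma_premotive}(B). Part (B) is symmetric.

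Step 1 is where I expect the main obstacle. \cref{sum_preserves_order} gives $\top+\top\leqint \lcomp{G}{H}+\lcomp{H}{K}$ over $\bool\times\bool$, and then \cref{fun_equiv} with $\phi=\psi=$ \emph{and} yields $\top=\emph{and}(\top,\top)\leqint \andplus{\lcomp{G}{H}}{\lcomp{H}{K}}$. The $\tri$ case is identical using parts (B)/(C) of \cref{sum_preserves_order}.

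The remaining concern is that \cref{sum_and_or} is stated only for $\leqint$. I would prove its $\tri$-analogue by the same device as its proof. The atomic inequality $\emph{and}(\lambda(\op{g},h),\lambda(\op{j},k))\leq \emph{or}(\lambda(\op{g},k),\rho(h,\op{j}))$ says the monotone map $A^{\mathrm{op}}\times A\times A^{\mathrm{op}}\times A\to\bool$ built from \emph{and} is pointwise below the one built from \emph{or} after rearranging components. So \cref{fun_equiv_trans}(B) transfers $\top\tri$ just as part (A) transfers $\top\leqint$.

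I would double-check one identification: the game $\lcomp{G}{H}+\lcomp{H}{K}$, and its image under the rearranging map, must be literally the game $\orplus{\lcomp{G}{K}}{\rcomp{H}{H}}$ up to relabelling of atoms. It is, since both are images of $\op{G}+H+\op{H}+K$ under monotone maps, and the dual factor $\rcomp{H}{H}$ uses the copy $H+\op{H}$.

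If that bookkeeping fails, the fallback is the direct induction proving (A), (B), (C) jointly. In (A), for a left option $G^L$ we have $G^L\tri H\leqint K$ and use (B) by induction; dually for $K^R$. The atomic case $g\leq h\leq k$ with $H$ composite is where premotivity must enter: I would use $H$'s local premotivity, i.e.\ $\top\tri\lcomp{H}{H}$, to produce an option $H'$ with $H^R\leqint H$ or $H\leqint H^L$. Then I chain through it, using the inductive $\tri$ cases on the smaller follower $H'$, which is premotive since followers of $H$ are.
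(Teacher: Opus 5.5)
Your proposal is correct and matches the paper's proof: you translate both hypotheses into comparison games via \cref{comparison_game_II}, combine them into an \emph{and}-sum with \cref{sum_preserves_order}, apply \cref{sum_and_or} with $J=H$, and strip off the $H+_\rho\op{H}$ summand using \cref{trans_lemma_premotive}, handling (B) and (C) the same way. Your extra bookkeeping (\cref{fun_equiv} to pass to the \emph{and}-sum, and \cref{fun_equiv_trans}(B) for the $\tri$-version of \cref{sum_and_or}) makes explicit what the paper leaves as ``similar,'' so the fallback induction is not needed.
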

\begin{proof}
  First we prove (A).
  Assume $G\leqint H$ and $H\leqint K$.
  From this, we know that $\top\leqint\lcomp{G}{H}$ and $\top\leqint\lcomp{H}{K}$ by \cref{comparison_game_II}.
  The games $\top$, $ \lcomp{G}{H}$, and $\lcomp{H}{K}$ are over the boolean poset, so the function $\andplus{}{}$ can be used to add them together.
  Then by \cref{sum_preserves_order}, we have \[\top\leqint\andplus{\lcomp{G}{H}}{\lcomp{H}{K}}.\]
  Then by \cref{sum_and_or}, we have \[\top\leqint\orplus{\lcomp{G}{K}}{H+_{\rho}H^{op}}.\]
  Because $H$ is premotive, we then have $\top\leqint\lcomp{G}{K}$ by \cref{trans_lemma_premotive}.
  This implies that $G\leq K$ as desired.

  The proofs of (B) and (C) are similar. 
\end{proof}

We finish this section with some properties that relate premotivity and $*$-antimonotonicity.

\begin{lemma}\label{*_cancellation}
  Let $G$ and $H$ be premotive games.
  Then $G+*\leq H$ if and only if $G\leq H+*$. \end{lemma}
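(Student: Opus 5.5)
The plan is to add $*$ to both sides and cancel the resulting $*+*$ using transitivity. We prove the forward direction; the converse is symmetric.

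Assume $G+*\leq H$. By \cref{sum_preserves_order}(A), adding the game $*$ to both sides (using $*\leq *$, which holds by \cref{comparison_game_II} and \cref{prop:copycat}) gives
\[ G+*+*\leq H+*. \]
We also know $*+*\inteq 0$, so again by \cref{sum_preserves_order} we get $G\inteq G+*+*$, and in particular $G\leq G+*+*$. Combining these yields the chain
\[ G\leq G+*+*\leq H+*. \]
We then invoke \cref{trans_leq}(A), whose middle game here is $G+*+*$.

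The obstacle is the hypothesis of \cref{trans_leq}: its middle term must be premotive, so we need $G+*+*$ to be premotive. By \cref{sum_premotive}, it suffices that $*$ is premotive, since $G$ is premotive by assumption. We check this directly.
\begin{itemize}
\item $*=\gc{0}{0}$, so its only followers are $*$ and $0$.
\item $0$ is atomic, and $\lcomp{0}{0}=\top$, so Left has a first player winning strategy there.
\item For $*$ itself, $\lcomp{*}{*}$ is $\gc{\cdot}{\cdot}$ with options $\lcomp{0}{*}$ and $\lcomp{*}{0}$. Left moves to, say, $\lcomp{*}{0}$, which is a copy of $*$ over $\bool$ whose atoms are all $\lambda(\op 0,0)=\top$. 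Left therefore wins regardless of who plays.
\end{itemize}
Hence $*$ is premotive, so $G+*$ and $G+*+*$ are premotive by \cref{sum_premotive}. \cref{trans_leq}(A) then gives $G\leq H+*$.

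The converse is the same argument with the roles exchanged. From $G\leq H+*$, add $*$ to get $G+*\leq H+*+*$. Then use $H+*+*\leq H$, which follows from $*+*\inteq 0$ and \cref{sum_preserves_order}. Now apply \cref{trans_leq}(A) with middle term $H+*+*$, which is premotive by \cref{sum_premotive}. This gives $G+*\leq H$.

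The one point to double-check is that $*+*\inteq 0$ gives the intrinsic inequality $G+*+*\leq G$ as a genuine sum statement. By \cref{sum_preserves_order}(A) with $G\leq G$ and $*+*\leq 0$, we get $G+(*+*)\leq G+0=G$, and associativity of the sum then identifies $G+(*+*)$ with $G+*+*$. So the argument does not depend on anything beyond the cited results.
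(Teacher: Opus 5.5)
Your proof is correct and follows the paper's argument. Like the paper, you add $*$ to both sides via \cref{sum_preserves_order}, then apply \cref{trans_leq} through the premotive middle term $G+*+*\inteq G$ (or $H+*+*$ for the converse), with premotivity supplied by \cref{sum_premotive}; your explicit check that $*$ is premotive fills in a detail the paper only asserts.
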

\begin{proof}

  First, assume $G+*\leq H$. Then by \cref{sum_preserves_order} we have that $G+*+*\leq H+*$.
  Both $G$ and $*+*$ are premotive, so by \cref{sum_premotive} $G+*+*$ is premotive.
  Because $G+*+*$ is premotive and $G+*+*\inteq G$, we have that $G\leq H+*$ by \cref{trans_leq}.

  The proof of the opposite implication is dual. \end{proof}

\begin{proposition}\label{tri_leq_relation}
  Let $G$ and $H$ be $*$-antimonotone and premotive games over some poset $A$. Then $G\tri H$ if and only if $G\leqint H+*$.\end{proposition}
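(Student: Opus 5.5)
The plan is to prove the two implications separately. The backward direction comes almost directly from the definition of $\leqint$. The forward direction uses local $*$-antimonotonicity together with \cref{*_cancellation} and the transitivity result \cref{trans_leq}.

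\emph{($\Leftarrow$)} Assume $G\leqint H+*$. The game $H+*$ is composite, because $*=\gc{0}{0}$ is composite. One of its right options is obtained by Right moving in the $*$ component, which gives $H+0$. We identify $H+0$ with $H$, since $A\times\deadset\cateq A$ and the sum is defined componentwise. Clause 2 of the definition of $G\leqint H+*$ requires $G\tri (H+*)^R$ for every right option, so in particular $G\tri H$. We should check that this identification $H+0=H$ is literal at the level of game forms. It is: by induction on $H$, each atom $h$ becomes $(h,0)=h$.

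\emph{($\Rightarrow$)} Assume $G\tri H$. By the definition of $\tri$ there are two cases.

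In the first case there is an $H^L$ with $G\leqint H^L$. Since $H$ is locally $*$-antimonotone, $H^L+*\leqint H$. Both $H^L$ and $H$ are premotive, because $H^L$ is a follower of $H$. So \cref{*_cancellation} gives $H^L\leqint H+*$. Now $G\leqint H^L\leqint H+*$, and the middle game $H^L$ is premotive, so \cref{trans_leq}(A) yields $G\leqint H+*$.

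In the second case there is a $G^R$ with $G^R\leqint H$. Local $*$-antimonotonicity of $G$ gives $G\leqint G^R+*$. Also $*\leqint *$, since every game is intrinsically below itself by \cref{prop:copycat} and \cref{comp_int}. Combining this with $G^R\leqint H$, \cref{sum_preserves_order}(A) gives $G^R+*\leqint H+*$. To chain these via \cref{trans_leq}(A) we need the middle game $G^R+*$ to be premotive. By \cref{sum_premotive} it suffices that $G^R$ and $*$ are premotive. $G^R$ is premotive as a follower of $G$. For $*$ we check by hand: Left wins $\lcomp{*}{*}$ moving first, since she moves in one copy and copies Right's reply, or equivalently uses \cref{o(G)_not_P}-style reasoning on this even game. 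Its followers are atoms $0$, which are trivially locally premotive because $\lcomp{0}{0}=\top$. Then $G\leqint G^R+*\leqint H+*$ gives $G\leqint H+*$.

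The main obstacle is bookkeeping rather than substance. We must make sure each application of \cref{trans_leq} has a premotive middle term, and that the identification $H+0=H$ is legitimate. The only mildly delicate point is confirming that $*$ itself is premotive, and I would verify this directly from the small game tree of $\lcomp{*}{*}$.
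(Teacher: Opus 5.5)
Your proof is correct and follows the paper's argument. The backward direction is the same observation that $H$ is a right option of $H+*$. The forward direction uses the same case split on the witness of $G\tri H$, combining local $*$-antimonotonicity with \cref{*_cancellation} and with \cref{trans_leq} applied through a premotive middle term; you also make explicit the appeal to \cref{*_cancellation} in the $H^L$ case, which the paper leaves implicit.

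The only variation is in the $G^R$ case. You add $*$ to both sides of $G^R\leqint H$ and chain through $G^R+*$, which is premotive by \cref{sum_premotive}. The paper instead rewrites $G\leqint G^R+*$ as $G+*\leqint G^R$, chains through $G^R$, and then cancels the $*$ again. Both routes rest on the same ingredients.
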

\begin{proof}
  Because $H$ is a right option of $H+*$, $G\leq H+*$ implies $G\tri H$.
  Because of this, we will only prove that $G\tri H$ implies $G\leq H+*$.

  Assume that $G\tri H$.
  This means either there exists some $H^L$ of $H$ such that $G\leqint H^L$ or there exists some $G^R$ of $G$ such that $G^R\leqint H$.
  Assume there is an $H^L$ as described above.
  Because $H$ is $*$-antimonotone, we have \[G\leqint H^L\leqint H+*.\]
  Because $H$ and all of its followers are premotive, we have transitivity through $H^L$, giving us that $G\leqint H+*$.

  Assume there is a $G^R$ as described above.
  Because $G$ is $*$-antimonotone, we have \[G+*\leqint G^R\leqint H.\]
  Because $G^R$ is premotive, we have that $G+*\leqint H$.
  As shown earlier, this is equivalent to saying $G\leqint H+*$.
\end{proof}

\begin{cor}\label{tri_leq_relation_swap}
  Let $G$ and $H$ be $*$-antimonotone and premotive games.
  Then $G\tri H$ if and only if $G+*\leqint H$.
\end{cor}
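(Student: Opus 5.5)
The corollary should follow by combining \cref{tri_leq_relation} with \cref{*_cancellation}, with no new induction needed.

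By \cref{tri_leq_relation}, since $G$ and $H$ are $*$-antimonotone and premotive, $G\tri H$ holds if and only if $G\leqint H+*$. By \cref{*_cancellation}, which only requires $G$ and $H$ to be premotive, $G\leqint H+*$ holds if and only if $G+*\leqint H$. Chaining the two equivalences gives $G\tri H \iff G+*\leqint H$, as claimed.

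The only point needing care is whether the hypotheses of \cref{*_cancellation} are met in the direction we use. That lemma is stated for premotive $G$ and $H$, which we have. Its proof also uses premotivity of $H+*+*$ (or $G+*+*$), and this follows from \cref{sum_premotive} because $*$ is premotive. One checks this directly: in $\lcomp{*}{*}$, Left moves in one component and the comparison of $0$ with $*$ then lets Left copy, or uses the fact that $\deadset$ is trivial so every atom compares to $\top$. The transitivity step there goes through \cref{trans_leq} applied to the premotive middle game $G+*+*$.

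I therefore expect no real obstacle. The proof is a two-line chain of previously established equivalences, with a remark confirming the premotivity hypotheses.
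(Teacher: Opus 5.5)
Your proposal is correct and is exactly the paper's proof, which simply chains \cref{tri_leq_relation} ($G\tri H \iff G\leqint H+*$) with \cref{*_cancellation} ($G\leqint H+*\iff G+*\leqint H$). Your extra check that $*$ is premotive is harmless and correct in its second form (every atomic comparison over $\deadset$ evaluates to $\top$), though the paper does not spell it out.
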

\begin{proof}
  This follows directly from \cref{tri_leq_relation} and \cref{*_cancellation}.
\end{proof}

\chapter{The intrinsic and contextual order, united}\label{Intrinsic_extrinsic_united}

We saw in \cref{eq:not-trans} a counterexample to the claim that the intrinsic order was transitive for all games. This already tells us that the contextual order $\leqc$ and the intrinsic order $\leq$ are not equivalent for all games. In this section, we will show that they coincide when restricted to the class of premotive games with parity.

\section{Intrinsic implies contextual}

\begin{lemma}\label{top_implies_win}
  If $\top\leqint G$, then Left has a second player winning strategy in $G$, and if $\top\tri G$, then Left has a first player winning strategy in $G$.
\end{lemma}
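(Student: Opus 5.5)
We prove both claims simultaneously by induction on $G$ (the induction on games over posets), where $G$ is a game over $\bool$ and $\top$ is the atomic game. Throughout we unfold the intrinsic definitions with the first argument atomic. Since $[\top]$ has no options, the definition of $\top\leqint G$ reduces to two conditions: for every right option $G^R$ we have $\top\tri G^R$, and if $G\identical g$ is atomic then $\top\leq_{\bool} g$. Dually, $\top\tri G$ can only hold through its second clause, because $\top$ has no right options. So $\top\tri G$ means there is a left option $G^L$ with $\top\leqint G^L$; in particular $G$ must be composite.

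\emph{Base case.} Let $G\identical g$ be atomic. If $\top\leqint g$, then $\top\leq g$ in $\bool$, so $g=\top$. By the definition of winning strategies for games over $\bool$, Left then has a second player winning strategy in $g$ (vacuously over right options, and $g=\top$). The hypothesis $\top\tri g$ cannot hold, since $g$ has no left options, so that claim is vacuous.

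\emph{Inductive step.} Let $G$ be composite.
\begin{itemize}
\item Suppose $\top\leqint G$. Then $\top\tri G^R$ for every $G^R$. By part two of the induction hypothesis, Left has a first player winning strategy in every $G^R$, and the atomic clause is vacuous since $G$ is composite. Hence Left has a second player winning strategy in $G$.
\item Suppose $\top\tri G$. Choose $G^L$ with $\top\leqint G^L$. By part one of the induction hypothesis, Left has a second player winning strategy in $G^L$, hence a first player winning strategy in $G$.
\end{itemize}

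The only delicate point, and the place I expect any obstacle, is bookkeeping rather than combinatorics. We must check that the recursive definition of winning strategies over $\bool$ matches the unfolding of $\leqint$ and $\tri$ exactly, including the atomic clauses. We must also make sure the induction is on $G$ alone. This is legitimate because the left argument $\top$ is fixed and atomic, so every recursive call stays of the form $\top$ versus a follower of $G$.
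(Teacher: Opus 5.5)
Your proof is correct and follows essentially the same route as the paper. Both argue by joint induction on $G$, using that $[\top]$ has no options. Hence $\top\leqint G$ reduces to $\top\tri G^R$ for every $G^R$ (together with $g=\top$ in the atomic case), and $\top\tri G$ reduces to the existence of some $G^L$ with $\top\leqint G^L$.
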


\begin{proof}
  We prove both statements by using joint induction on $G$:

  To prove the first statement, let $G$ be a game such that $\top\leq G$.
  If $g$ is atomic, then $\top\leq g$ implies that Left has a first and second player winning strategy in $g$, because the game is already finished.

  Assume $G$ is composite, and let $G^R$ be some right option of $G$.
  Then $\top\tri G^R$.
  By the induction hypothesis, this implies that  Left has a first player winning strategy in $G^R$.
  This argument holds regardless of which $G^R$ we consider.
  Then Left has a first player winning strategy in every right option of $G$, which implies that Left has a second player winning strategy in $G$.

  Now we prove the second statement.
  Let $G$ be a game such that $\top\tri G$.
  Because $\top$ has no right options, there must exist some $G^L$ of $G$ such that $\top\leq G^L$.
  By the induction hypothesis, Left has a second player winning strategy in $G^L$, which implies that Left has a first player winning strategy in $G$.
\end{proof}


\begin{theorem}\label{intrinsic_implies_contextual} For games $G$ and $H$ that are premotive and have parity, if $G\leq H$, then $G\leqc H$. \end{theorem}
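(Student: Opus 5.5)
We must show: for every context $X$ over $\fun{A}$, $o_L(G+X)\le o_L(H+X)$ and $o_R(G+X)\le o_R(H+X)$. The plan is to reduce outcomes to the intrinsic relation with $\top$ and then use transitivity (\cref{trans_leq}) through a premotive middle game.

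First, for a fixed context $X$, \cref{sum_preserves_order} (with $X\leqint X$, which holds by \cref{prop:copycat} and \cref{comp_int}) gives $G+X\leqint H+X$. Then \cref{fun_equiv} with $\phi=\psi=\varepsilon$ gives $G+_\varepsilon X\leqint H+_\varepsilon X$; these are games over $\bool$. It remains to show, for games $P,Q$ over $\bool$ with $P\leqint Q$ and suitable hypotheses, that a second (first) player win for Left in $P$ yields one in $Q$.

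The natural route is via \cref{top_implies_win}: if I can show that Left having a second player win in $P$ implies $\top\leqint P$, then transitivity gives $\top\leqint Q$, hence a second player win in $Q$. Similarly $\top\tri P$ together with $P\leqint Q$ gives $\top\tri Q$ by \cref{trans_leq}(B). However, $\top\leqint P$ requires more than a win: it requires Left to win \emph{and} make the last move, which is precisely the difference between the intrinsic and winning notions. This is the main obstacle. To avoid it I would not pass through $\top$; instead I would prove directly, by simultaneous induction on $G,H,X$, the two claims: (i) if $G\leqint H$, then $o_L(G+X)\le o_L(H+X)$ and $o_R(G+X)\le o_R(H+X)$; (ii) if $G\tri H$, then $o_R(G+X)\le o_L(H+X)$. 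Parity is what makes atomic cases work.

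For (i), consider the case where Left wins moving first in $G+X$. If her winning move is in $X$, she copies it in $H+X^L$ and applies induction. If it is to $G^L+X$, then $G^L\tri H$, and (ii) gives $o_R(G^L+X)\le o_L(H+X)$, as required. Right's moves in $H+X$ are handled dually: a move to $H^R$ uses $G\tri H^R$ and (ii). For (ii), if $G^R\leqint H$, Right's move $G^R+X$ is available from $G+X$, so a Left second player win there yields a first player win in $G^R+X$, which transfers to $H+X$ by (i). If instead $G\leqint H^L$, Left moves to $H^L+X$ and (i) transfers $o_R$.

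The delicate cases arise when one of $G,H$ is atomic while the other is not, or when the summand in which the relevant move occurs is exhausted; for example, Right may have no move in $G$ when $G$ is atomic. Here the parity hypothesis (\cref{parity}: $G\leqint H$ forces matching parity) and premotivity, via the lookahead property \cref{o(G)_not_P}, should supply the missing move. Concretely, when $G$ is atomic and $H$ is even, I would use \cref{bot_G_top}-style comparisons and \cref{trans_leq} through the premotive $H$ to replace $H$ by a game in which the move exists. I expect this boundary handling to be the crux of the proof.
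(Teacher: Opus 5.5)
\textbf{The gap is the base case.} Your induction scheme is the paper's own. Claims (i) and (ii) are its claims (A), (C) and (B), and your inductive cases are exactly the ones the paper uses: copying a move in $X$, using $G^L\tri H$ or $G\tri H^R$ with the $\tri$-claim, and splitting $G\tri H$ into its two witnesses. What you leave open is the base case, which you only say parity and premotivity ``should supply''. Your diagnosis of it is also off: no summand runs out of moves in a harmful way. Whenever $G+X$ is composite, Left's winning first move lies in $G$ or in $X$, and your induction handles it.

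The only nontrivial cases are these two:
\begin{enumerate}
\item the first-player claim with $g$ and $x$ atomic but $H$ composite;
\item dually, the second-player claim with $h$ and $x$ atomic but $G$ composite.
\end{enumerate}
In the first, Left's first-player win in $g+x$ just means $x(g)=\top$. There is no move to copy, so a first-player win in $H+x$ must be produced from scratch. This is the only point where parity and premotivity are used, and it is exactly where the statement fails without them. Take $K=\gc{\gc{\bot}{\bot}}{\gc{\top}{\top}}$ from \cref{eq:not-trans}, which is even but not premotive, and $x=\mathrm{id}_{\bool}$. Then $\top\leq K$ and Left wins $\top+x$, yet Left loses $K+x$ moving first. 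So, as written, your argument does not yet separate premotive games with parity from arbitrary ones.

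\textbf{The missing step.} It is short, and it is the route through $\top$ that you set aside. At the base your objection disappears, because $x(g)$ is atomic.
\begin{itemize}
\item Since $x$ is monotone, \cref{fun_equiv} turns $g\leq H$ into $\top=x(g)\leq x(H)$.
\item By \cref{top_implies_win}, Left then has a second player winning strategy in $x(H)$, which is the game $H+x$.
\item Since $g$ is even, \cref{parity} forces $H$ to be even. So $x(H)$ is an even game over $\bool$, and it is premotive because $H$ is and $x$ is monotone.
\item The lookahead property \cref{o(G)_not_P} upgrades the second player win to a first player win.
\end{itemize}
The other base case is dual. Suppose $x(h)=\bot$. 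Then $x(G)\leq x(h)=\bot$ gives Right a second player win in $x(G)$. Since $G$ is even, by \cref{parity} again, lookahead gives Right a first player win. That contradicts Left's second player win in $G+x$. No \cref{bot_G_top}-style replacement of $H$ is needed.
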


\begin{proof}

  To show this, we will prove the following three claims using joint induction on $G$, $H$, and $X$.
  Let $G$ and $H$ be games over some poset $A$ and let $X$ be a game over $\fun{A}$. Assume that $G$ and $H$ are premotive and have parity.
  We will prove the following three statements by joint induction on $G$, $H$, and $X$:
  \begin{enumerate}[(A)]
  \item If $G\leqint H$ and Left has a first player winning strategy in $G+X$, then Left has a first player winning strategy in $H+X$.
  \item If $G\tri H$ and Left has a second player winning strategy in $G+X$, then Left has a first player winning strategy in $H+X$.
  \item If $G\leqint H$ and Left has a second player winning strategy in $G+X$, then Left has a second player winning strategy in $H+X$.
  \end{enumerate}
  Together, claims (A) and (C) imply that $o(G+X)\leq o(H+X)$, which proves $G\leqc H$.
  
  We will begin by proving claim (A).
  Assume that $G\leqint H$ and Left has a first player winning strategy in the game $G+X$.
  If $g$ and $x$ are atomic, then it must be that $\top\leq x(g)$.
  Because $x$ is a monotone function, $g\leqint H$ implies that $x(g)\leqint x(H)$.
  Because all atomic games are premotive, we can use transitivity to say that $\top\leqint x(H)$.
  By \cref{top_implies_win}, $\top\leqint x(H)$ implies Left has a second player winning strategy in $x(H)$.
  Since $g\leq H$ and $H$ has parity, $H$ is even by \cref{parity}.
  Hence $x(H)$ is even as well.
  Because $H$ is premotive, we also have that $x(H)$ is premotive.
  Then by \cref{o(G)_not_P} Left must have a first player winning strategy in $x(H)$, as desired.
  
  Now assume that $G$ and $X$  are not both atomic.
  There must exist some $(G+X)^L$ in which Left has a second player winning strategy.
  First assume this is a move in $X$, resulting in $G+X^L$.
  Then because $G\leqint H$ and because Left has a second player winning strategy in the game $G+X^L$, Left must have a second player winning strategy in the game $H+X^L$ by part (C) of the induction hypothesis.
  Therefore, Left has a first player winning strategy in the game $H+X$.

  Now assume Left's move was not in $X$, but rather to some $G^L$ of $G$.
  Because $G\leqint H$, we know $G^L\tri H$.
  Then because $G^L\tri H$ and because Left has a second player winning strategy in the game $G^L+X$, Left must have a first player winning strategy in the game $H+X$ by part (B) of the induction hypothesis.

  Now we will prove claim (B).
  Assume that $G\tri H$ and Left has a second player winning strategy in $G+X$.

  To say $G\tri H$ is to say that either there exists some $G^R$ such that $G^R\leqint H$ or there exists some $H^L$ such that $G\leqint H^L$.
  First, suppose there exists some $G^R$ such that $G^R\leqint H$.
  Because Left has a second player winning strategy in $G+X$, Left must have a first player winning strategy in the game $G^R+X$.
  This implies Left has a first player winning strategy in the game $H+X$ by part (A) of the induction hypothesis.
  Now suppose there exists some $H^L$ such that $G\leqint H^L$.
  Then Left has a second player winning strategy in the game $H^L+X$ by part (C) of the induction hypothesis, which means Left has a first player winning strategy in the game $H+X$.

  Finally, we prove claim (C).
  Assume that $G\leq H$ and Left has a second player winning strategy in the game $G+X$.
  When $h$ and $x$ are both atomic, the proof is dual to that of claim (A).

  Assume $H$ and $X$ are not both atomic.
  We want to show that Left has a second player winning strategy in the game $H+X$.
  Consider some some right option of $H+X$.
  If this right option is of the form $H+X^R$, then Left must have a first player winning strategy in the game $G+X^R$.
  Then by part (A) of the induction hypothesis, Left has a first player winning strategy in $H+X^R$.
  If this right option is of the form $H^R+X$, then because $G\leq H$ we know $G\tri H^R$.
  Then by part (B) of the induction hypothesis, Left has a first player winning strategy in $H^R+X$.
  Then Left has a second player winning strategy in $H+X$, as desired.
\end{proof}

\section{Contextual implies intrinsic}

\begin{theorem}\label{contextual_implies_intrinsic}
  If $G$ and $H$ have parity, then $G\leqc H$ implies $G\leq H$.\end{theorem}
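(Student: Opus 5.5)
The plan is to build a single context $X$ that turns the contextual comparison of $G$ and $H$ into the comparison game $\lcomp{G}{H}$, and then read off $G\leqint H$ from \cref{comp_int}. Define $\phi\colon \op{A}\to\fun{A}$ by $\phi(\op{b})(a)=\lambda(\op{b},a)$, i.e.\ $\phi(\op{b})$ is the function $a\mapsto(b\leq_A a)$. For each $b$ this function is monotone in $a$, so $\phi(\op{b})\in\fun{A}$. The map $\phi$ itself is monotone: if $\op{b}\leq\op{b'}$ then $b'\leq b$, so $b\leq a$ implies $b'\leq a$. Since $\op{G}$ is a game over $\op{A}$, the game $X:=\phi(\op{G})$ is a legitimate context.

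The first step is a routine induction on the two games, using \cref{sum_on_game}, \cref{map_on_game} and \cref{def:dual2}. It shows that for every game $K$ over $A$, the game $K+_\varepsilon\phi(\op{G})$ is identical to $\lcomp{G}{K}$, up to the order of the two summands. At atoms both sides give $\varepsilon(k,\phi(\op{g}))=(g\leq k)=\lambda(\op{g},k)$. At composite stages the left and right options correspond one-to-one, because the left options of $\op{G}$ are the $\op{G^R}$ and its right options are the $\op{G^L}$.

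Next I apply this with $K=G$. \cref{prop:copycat} says Left has a second player winning strategy in $\lcomp{G}{G}$, so $o_R(G+X)=\top$. Since $G\leqc H$ gives $o(G+X)\leq o(H+X)$, we get $o_R(H+X)=\top$. By the first step, this means Left has a second player winning strategy in $\lcomp{G}{H}$.

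The step I expect to be the only real obstacle is upgrading this to the form demanded by \cref{comp_int}(A), namely a second player strategy in which Left also makes the last move. This is where the parity hypothesis enters. By \cref{matching_parity_contextual}, $G\leqc H$ with both games having parity forces $G$ and $H$ to have matching parity. Then $\op{G}+H$ is even: every move flips the parity of exactly one component, and play only ends when both components are atomic. So along any play of $\lcomp{G}{H}$ in which Right moves first, the total number of moves is even, and the final move is Left's. I would check this by the same induction as in the proof of \cref{parity_trans}, tracking that both components stay of equal parity after each Left reply. Moreover, because every composite game has nonempty option sets, Left is never stuck before the game becomes atomic. Hence Left's second player winning strategy is automatically one in which she moves last, and \cref{comp_int}(A) yields $G\leqint H$.
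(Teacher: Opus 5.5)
Your proposal is correct and follows the same route as the paper: copycat in $\lcomp{G}{G}$, transfer to $\lcomp{G}{H}$ via $G\leqc H$, matching parity from \cref{matching_parity_contextual}, then \cref{comp_int}. You also supply two details the paper leaves implicit: the explicit context $\phi(\op{G})$ over $\fun{A}$, obtained by currying $\lambda$, and the argument that evenness of $\lcomp{G}{H}$ forces Left's second player win to end with Left making the last move.
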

\begin{proof}

  Assume that $G$ and $H$ have parity and that $G\leqc H$.
  By \cref{matching_parity_contextual}, we know $G$ and $H$ must have matching parity.
  Left certainly has a second player winning strategy in the game $\lcomp{G}{G}$, that being the copycat strategy.
  Because $G\leqc H$, Left also has a second player winning strategy in $\lcomp{G}{H}$.
  This implies $G\leq H$ by \cref{comp_int}.
\end{proof}

\cref{intrinsic_implies_contextual} and \cref{contextual_implies_intrinsic} come together as the following corollary:
\begin{cor}
  Let $G$ and $H$ be games over some poset $A$. If both $G$ and $H$ are premotive and have parity, then $G\leq H$ if and only if $G\leqc H$. 
\end{cor}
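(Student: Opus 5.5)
The corollary follows by putting the two preceding theorems side by side, so the plan is simply to check that their hypotheses are met in each direction.

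For the forward implication, assume $G\leq H$ with $G$ and $H$ premotive and with parity. These are exactly the hypotheses of \cref{intrinsic_implies_contextual}, which gives $G\leqc H$ at once. That theorem carries the real content: its joint induction on $G$, $H$ and the context $X$ uses transitivity through premotive games (\cref{trans_leq}) and the lookahead property (\cref{o(G)_not_P}) to settle the atomic base cases.

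For the reverse implication, assume $G\leqc H$. Here only parity is needed, and \cref{contextual_implies_intrinsic} gives $G\leq H$ directly. The argument behind it runs as follows. By \cref{matching_parity_contextual} the two games have matching parity. The copycat strategy (\cref{prop:copycat}) gives Left a second player win in $\lcomp{G}{G}$, and contextual comparison lets us replace the $G$ component by $H$. Then \cref{comp_int} converts this back into the intrinsic relation.

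Since both directions are already proved, I see no real obstacle. The one point to check is that the comparison game $\lcomp{G}{\cdot}$ really is an instance of the context-sum $\cdot+X$, so that contextual comparison applies to it. This holds because the context $X=\op{G}$, read over $\fun{A}$ via $\lambda$, is a legitimate context. Beyond that, I only need to confirm that the hypotheses of the corollary contain those of both theorems, which they do.
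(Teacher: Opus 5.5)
Your proposal is correct and is exactly the paper's argument: the corollary is obtained by combining \cref{intrinsic_implies_contextual}, which needs premotivity and parity, with \cref{contextual_implies_intrinsic}, which needs only parity, and the corollary's hypotheses cover both. You also note that $\lcomp{G}{H}$ is a genuine instance of $H+X$, taking $X$ to be the image of $\op{G}$ under the monotone map $\op{a}\mapsto\lambda(\op{a},-)$ from $\op{A}$ to $\fun{A}$; this makes explicit a step the paper uses silently in the proof of \cref{contextual_implies_intrinsic}.
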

\chapter{Canonical forms}\label{Canonical_forms}

In Combinatorial Game Theory, the number of followers a game has heavily impacts how easy it is to analyze a position.
Let $G$ be a game over some poset $A$, and let $X$ be a game over the poset $\fun{A}$.
Suppose $G$ has $n_G$ left options and $m_G$ right options, and $X$ has $n_X$ left options and $m_X$ right options.
Then the game $G+X$ will have $n_G+n_X$ left options and $m_G+m_X$ Right options.

If $G$ is contextually equivalent to an atomic game $h$, then $h+X$ has as many left/right options as $X$ has.
The game $h+X$ will even have the same number of followers as $X$.
Because $G$ and $h$ are contextually equivalent, we have that $G+X\conteq h+X$ and $o(G+X)=o(h+X)$.
This means we can analyze the behavior of $G+X$ while doing only a fraction of the work. 

Now let $\fancy{G}$ be the set of games that are contextually equivalent to $G$.
We can order this set by how many followers each game has.
A \emph{canonical form} of a game $G$ is an element of the set $\fancy{G}$ with the fewest followers.
The games that we are working with only have finitely many followers, so we know that for any game there exists a collection of canonical forms.
In this chapter we will demonstrate how to find these canonical forms and we will prove that, for finite premotive games with parity, there is always a \emph{unique} canonical form. Moreover, if $G$ is also $*$-antimonotone, then its canonical form is as well.

\begin{lemma}\label{options_sam}
  Let $G$ and $H$ be premotive games such that $G\inteq H$.
  If $G$ and all the options of $H$ are $*$-antimonotone, then $H$ is $*$-antimonotone.\end{lemma}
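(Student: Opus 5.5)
Since $*$-antimonotonicity of $H$ means that $H$ and all of its followers are locally $*$-antimonotone, and every follower of $H$ other than $H$ itself is a follower of some option of $H$ (hence already $*$-antimonotone by hypothesis), it suffices to show that $H$ is \emph{locally} $*$-antimonotone, i.e.\ that $H^L+*\leqint H\leqint H^R+*$ for every left option $H^L$ and right option $H^R$ of $H$. We prove only the left inequality; the right one is dual. If $H$ is atomic there is nothing to prove, so assume $H$ is composite.

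Fix a left option $H^L$. The plan is to obtain $H^L+*\leqint H$ by rewriting it with \cref{tri_leq_relation_swap}. Both $H^L$ and $H$ are premotive: $H^L$ is a follower of the premotive game $H$. The game $H^L$ is $*$-antimonotone by hypothesis. However, \cref{tri_leq_relation_swap} as stated requires both games to be $*$-antimonotone, and that is exactly what we are trying to prove about $H$. So instead we route through $G$, which is $*$-antimonotone. Since $H\leqint G$ and $H^L$ is a left option of $H$, the definition of $\leqint$ gives $H^L\tri G$. Applying \cref{tri_leq_relation_swap} to the pair $H^L$, $G$ (both premotive and $*$-antimonotone) yields $H^L+*\leqint G$. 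Combining this with $G\leqint H$ and using transitivity (\cref{trans_leq}(A), whose middle game $G$ is premotive) gives $H^L+*\leqint H$, as required.

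Dually, for a right option $H^R$ we have $G\leqint H$, so $G\tri H^R$. Then \cref{tri_leq_relation} applied to $G$ and $H^R$ gives $G\leqint H^R+*$. Since $H\leqint G$, transitivity through the premotive game $G$ gives $H\leqint H^R+*$.

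The main point needing care is that the transitivity step and the $\tri$-to-$\leqint$ conversion only use hypotheses on $G$ and on the options of $H$, never on $H$ itself. A second check is that \cref{tri_leq_relation} and its corollary really rely only on premotivity and $*$-antimonotonicity of the two games supplied to them. Inspecting the proof of \cref{tri_leq_relation}, it uses $*$-antimonotonicity of the game whose option is witnessed, transitivity through that option (which is premotive), and \cref{*_cancellation}, which needs premotivity of both games. All of these hold for the pairs $(H^L,G)$ and $(G,H^R)$.
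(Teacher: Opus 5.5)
Your proof is correct and matches the paper's argument: from $H\leqint G$ you get $H^L\tri G$, apply \cref{tri_leq_relation_swap} to obtain $H^L+*\leqint G$, then use transitivity through the premotive game $G$ to reach $H^L+*\leqint H$, and the right options are handled dually. Your extra bookkeeping makes explicit two points the paper leaves implicit: the reduction to local $*$-antimonotonicity of $H$, and the check that the corollary's hypotheses hold for the pairs $(H^L,G)$ and $(G,H^R)$.
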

\begin{proof}
  Let $G$ and $H$ be premotive games such that $G\inteq H$, and assume that $G$ and all the options of $H$ are $*$-antimonotone.
  We want to show that $H$ is $*$-antimonotone.
  Let $H^L$ be some left option of $H$.
  Because $H\leqint G$, it must be that $H^L\tri G$.
  By assumption, both $H^L$ and $G$ are $*$-antimonotone and premotive, so by \cref{tri_leq_relation_swap} we have $H^L+*\leq G$.
  Since $G$ is premotive, we have that $H^L+*\leq G\leq H$ implies $H^L+*\leq H$ by transitivity.
  Then we have $H^L+*\leq H$ for all left options $H^L$ of $H$.
  We have $H\leq H^R+*$ for all right options $H^R$ of $H$ for similar reasons.
  Therefore $H$ is $*$-antimonotone. \end{proof}

\begin{lemma}[Gift Horse Lemma]\label{gift_horse}
  Let $G$, $H$, and $G$ be games over some poset $A$ and let $G'=\gc{G^L}{G^R}$ and $G=\gc{H,G^L}{G^R}$.
  Then $G'\leqc G$ and $G'\leq G$. \end{lemma}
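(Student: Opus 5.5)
The plan is to treat both claims by direct induction, in the spirit of \cref{congruent_option_replacement}. The statement has a naming clash; I read it as saying that $G'$ is obtained from $G$ by deleting the single left option $H$. So $G$ and $G'$ have the same right options, and the left options of $G'$ are exactly those of $G$ other than $H$. I assume $G'$ has at least one left option, so that it is a composite game form.

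For the intrinsic claim, I first record reflexivity of $\leqint$: $K\leqint K$ for every game $K$. This follows from \cref{prop:copycat} together with \cref{comp_int}(A), since the copycat strategy always lets Left answer last. It can also be shown directly by induction: $K^L\leqint K^L$ gives $K^L\tri K$, and $K^R\leqint K^R$ gives $K\tri K^R$. With reflexivity, $G'\leqint G$ is immediate from the definition.
\begin{itemize}
\item Each left option $G^L$ of $G'$ is also a left option of $G$, and $G^L\leqint G^L$, so $G^L\tri G$.
\item Each right option $G^R$ of $G$ is also a right option of $G'$, and $G^R\leqint G^R$, so $G'\tri G^R$.
\end{itemize}
The atomic clause is vacuous because both games are composite. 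No premotivity or parity hypotheses are needed here.

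For the contextual claim, I prove $o_L(G'+X)\leq o_L(G+X)$ and $o_R(G'+X)\leq o_R(G+X)$ by induction on the context $X$.
\begin{enumerate}
\item For $o_L$: suppose Left wins moving first in $G'+X$. If her winning move is to some $G^L+X$, the same move exists in $G+X$ and leads to the identical position. If it is to $G'+X^L$, then she moves to $G+X^L$, and the induction hypothesis $o(G'+X^L)\leq o(G+X^L)$ preserves her second player win.
\item For $o_R$: suppose Left wins moving second in $G'+X$. Right's options in $G+X$ are of the form $G^R+X$ or $G+X^R$. Positions of the form $G^R+X$ are literally also right options of $G'+X$, so Left wins there. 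For $G+X^R$, the induction hypothesis transfers Left's first player win from $G'+X^R$.
\end{enumerate}
When $X$ is atomic, only the moves inside $G$ occur, and the same argument goes through with the $X$-cases removed.

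The only point needing care is the bookkeeping of the sum when $X$ is atomic, where the sum is defined by a separate clause. Since $G$ and $G'$ are composite, \cref{sum_on_game} still yields exactly the same move-inside-$G$ options, so there is no real obstacle. The intrinsic part does rely on reflexivity, which the paper has not isolated explicitly, so I would state it as a one-line induction first.
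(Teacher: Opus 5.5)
Your proof is correct and is essentially the paper's argument. The paper says only that the lemma follows from the definitions, and you make explicit the two ingredients it leaves implicit: reflexivity of $\leq$, which is needed to witness $G^L\tri G$ and $G'\tri G^R$, and the induction on the context $X$ for $G'\leqc G$, which runs exactly as in \cref{congruent_option_replacement}. Your assumption that $G'$ keeps at least one left option, so that it is a valid game form, is the right reading of the hypotheses.
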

\begin{proof}
  The proof of this follows from the definition of $\leq$. \end{proof}

\section{Domination}
One way to simplify a game is to ignore the non-optimal moves. For example, suppose there are two left options $H$ and $K$ such that $H\leq K$. If Left wants to play the move $H$, then Left can play the move $K$ instead. The result will be equivalent or better than if Left had played $H$. We say that the left option $H$ is \emph{dominated}, and we define this formally as follows:
\begin{definition}[Dominated options]
  Let $G$ be a game of the form \\$G=\gc{H,G^L}{G^R}$.
  Then we say that the left option $H$ is \emph{dominated} if $H\leq G^L$ for some $G^L\neq H$ of $G$.
  We say $G'$ is $G$ with the left option $H$ removed if $G'=\gc{G^L}{G^R}$.
  Domination for right options is defined dually.
\end{definition}

In order to make the proof that dominated options can be removed while preserving premotivity, parity, and $*$-antimonotonicity (\cref{dom_preserves}) easier, in the following lemma we assume $H\leqc G^L$ instead of assuming $H\leq G^L$.
We are able to do this because, when $G$ is premotive and has parity, $H\leq G^L$ implies $H\leqc G^L$ by \cref{intrinsic_implies_contextual}.
\begin{lemma}\label{dom_equiv}
  Let $G$ be a game of the form $G=\gc{H,G^L}{G^R}$ such that $H$ is dominated, and let $G'=\gc{G^L}{G^R}$.
  Then $G\conteq G'$. 
\end{lemma}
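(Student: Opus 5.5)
The two inequalities are handled separately. Throughout, domination is taken in the contextual sense fixed in the preceding paragraph: $H\leqc K$ for some left option $K\neq H$ of $G'$.

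The inequality $G'\leqc G$ is immediate from the Gift Horse Lemma (\cref{gift_horse}), since $G$ is $G'$ with the extra left option $H$ added.

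The substantive direction is $G\leqc G'$. I would first construct an auxiliary game $G''$ from $G$ by replacing the option $H$ with $K$. By \cref{congruent_option_replacement}, since $H\leqc K$, we get $G\leqc G''$. The left options of $G''$ are the same as those of $G'$, except that $K$ now appears twice. As a set, $\{K\}\cup\{G^L\}$ equals the set of left options of $G'$, because $K$ is already among them. Hence $G''\identical G'$, and so $G\leqc G'$. Combining the two inequalities gives $G\conteq G'$.

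The only delicate point is this identification of $G''$ with $G'$. It relies on game forms having \emph{sets} of options, which is how \cref{def:games_posets} defines them, so duplicates collapse. If this felt too slick, I would instead argue directly by induction on the context $X$, mirroring the proof of \cref{congruent_option_replacement}. For claim (A), suppose Left's winning first move in $G+X$ is to $H+X$. Then $o(H+X)\leq o(K+X)$, so moving to $K+X$ in $G'+X$ also wins. Moves in $X$ are handled by the induction hypothesis. For claim (B), the right options of $G$ and $G'$ coincide, and moves to $G'+X^R$ are handled by the induction hypothesis.

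Either way, no premotivity or parity assumptions are needed at this stage. Those hypotheses enter only when converting intrinsic domination $H\leq G^L$ into $H\leqc G^L$ via \cref{intrinsic_implies_contextual}.
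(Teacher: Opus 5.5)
Your proof is correct, and your main argument takes a different route from the paper's. The paper proves $G\leqc G'$ with a fresh induction on the context $X$. It has one claim for second-player wins and one for first-player wins, and it handles Left's move to $H+X$ by switching to $K+X$ via $H\leqc K$. That is exactly your fallback argument, and it is essentially a rerun of the proof of \cref{congruent_option_replacement}.

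Your primary argument instead observes that removing a dominated option is a special case of option replacement. Replacing $H$ by the dominating option $K$ gives $G\leqc G''$ by \cref{congruent_option_replacement}. Option collections are sets (\cref{def:games_posets}), and $K$ is already a left option other than $H$, so $G''\identical G'$.

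This buys brevity and shows that \cref{dom_equiv} is a corollary of an already-proved lemma, with no new induction. Two checks are needed, and both pass. First, \cref{congruent_option_replacement} must not require $K$ to be a new option; its proof never uses that. Second, $G'$ must be a legitimate game form with a nonempty set of left options, which holds because $K$ survives the removal. The paper's direct induction is self-contained but repeats work already done in \cref{congruent_option_replacement}.
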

\begin{proof}
  
  Let $G$ and $G'$ be defined as above, and let $X$ be a game over the poset $\fun{A}$.
  By \cref{gift_horse} we have that $G'\leqc G$, so we only need to prove $G\leqc G'$. 
  We will prove the following claims using induction on $X$:
  \begin{enumerate}[(A)]
  \item If Left has a second player winning strategy in the game $G+X$, then Left has a second player winning strategy in the game $G'+X$.
  \item If Left has a first player winning strategy in the game $G+X$, then Left has a first player winning strategy in the game $G'+X$.
  \end{enumerate}
  
  First we will prove claim (A).
  Assume Left has a second player winning strategy in the game $G+X$.
  We want to show that Left has a second player winning strategy in the game $G'+X$.
  Every right option of $G'$ is a right option of $G$, so those are matched.
  For all $X^R$ of $X$, Left has a first player winning strategy in the game $G+X^R$.
  This implies Left has a first player winning strategy in the game $G'+X^R$ by part (B) of the induction hypothesis.
  Then Left has a first player winning strategy in all the right options of $(G'+X)$, so Left must have a second player winning strategy in $G'+X$. 

  Now we will prove claim (B).
  Assume Left has a first player winning strategy in $G+X$.
  We want to prove that Left has a first player winning strategy in $G'+X$.
  The left options of $G+X$ can be split into three forms: $G+X^L$ for some $X^L$ of $X$, $G^L+X$ for some $G^L\neq H$ of $G$, or $H+X$.
  If Left has a second player winning strategy in $G+X^L$ for some $X^L$ of $X$, then by part (A) of the induction hypothesis we have that Left has a second player winning strategy in $G'+X^L$, which implies Left has a first player winning strategy in $G'+X$.
  Now, assume Left has a second player winning strategy in $G^L+X$ for some $G^L\neq H$ of $G$.
  Because $G^L$ is also a left option of $G'$, Left has a first player winning strategy in $G'+X$.
  Finally, assume Left has a second player winning strategy in $H+X$.
  By assumption, there exists a left option $G^L$ of $G$ such that $H\leqc G^L$.
  Then Left has a second player winning strategy in $G^L+X$ for the $G^L$ described above.
  Then Left has a first player winning strategy in $G'+X$.
\end{proof}

\begin{theorem}[Removing dominated options]\label{dom_preserves}
  Let $G=\gc{H,G^L}{G^R}$ be a premotive game with parity such that $H$ is dominated.
  Let $G'=\gc{G^L}{G^R}$.
  Then $G'$ is premotive and has parity, and $G\inteq G'$.
  Moreover, if $G$ is $*$-antimonotone then $G'$ is $*$-antimonotone.
\end{theorem}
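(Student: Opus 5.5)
The plan is to verify the four conclusions in turn: parity, premotivity, the equivalence $G\inteq G'$, and $*$-antimonotonicity. Parity is immediate. The options of $G'$ form a subset of the options of $G$. Note that $G'$ is still composite, since it retains some $G^L$ (the dominating option, which differs from $H$) and all right options of $G$. Hence if every option of $G$ is odd (respectively even), so is every option of $G'$, and $G'$ has the same parity as $G$.

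For the equivalence, \cref{gift_horse} gives $G'\leq G$ directly. For $G\leq G'$ I would check the intrinsic definition clause by clause.
\begin{itemize}
\item The right options of $G'$ coincide with those of $G$. Since $G\leq G$ holds by \cref{comp_int} together with \cref{prop:copycat}, we get $G\tri G^R$ for all $G^R$.
\item For left options $G^L\neq H$ of $G$, we need $G^L\tri G'$, which holds because $G^L$ is itself a left option of $G'$ and $G^L\leq G^L$ (again by copycat).
\item For $H$, pick the dominating $G^L$ with $H\leq G^L$. Then $G^L\leq G^L$ is witnessed as a left option of $G'$, so $H\leq G^L$ gives $H\tri G'$ via the clause ``there exists $G'^L$ with $H\leq G'^L$''.
\end{itemize}
Thus no transitivity is needed for this step. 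Alternatively, one can use \cref{dom_equiv}, since $H\leq G^L$ implies $H\leqc G^L$ by \cref{intrinsic_implies_contextual} (both are followers of $G$, hence premotive with parity). This gives $G\conteq G'$, and then \cref{contextual_implies_intrinsic} converts this back to $G\inteq G'$ once parity of $G'$ is known. I would present the direct intrinsic argument and mention the contextual route as a consistency check.

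Premotivity is the main obstacle. All proper followers of $G'$ are followers of $G$, hence premotive, so only local premotivity of $G'$ itself is needed. I would use the contextual route: $G\conteq G'$ and $G$ is locally premotive, so \cref{cont_eq_premotivity} gives $G'$ locally premotive. This requires $G\conteq G'$, which I obtain from \cref{dom_equiv} using $H\leqc G^L$ as above. The subtle point is that \cref{intrinsic_implies_contextual} must be applied to $H$ and $G^L$, not to $G'$, which avoids circularity.

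Finally, for $*$-antimonotonicity, suppose $G$ is $*$-antimonotone. Every option of $G'$ is an option of $G$, hence $*$-antimonotone together with all its followers. Since $G'$ is premotive, $G$ is premotive, $G\inteq G'$, and $G$ is $*$-antimonotone, \cref{options_sam} (with the roles $G\mapsto G$, $H\mapsto G'$) yields that $G'$ is locally $*$-antimonotone. Combined with the followers, $G'$ is $*$-antimonotone.
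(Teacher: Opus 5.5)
Your proof is correct. For premotivity and $*$-antimonotonicity it follows the paper's argument. You obtain $H\leqc G^L$ by applying \cref{intrinsic_implies_contextual} to the followers $H$ and $G^L$, then $G\conteq G'$ by \cref{dom_equiv}. From there, local premotivity of $G'$ comes from \cref{cont_eq_premotivity}, and $*$-antimonotonicity from \cref{options_sam}.

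Where you differ is the equivalence $G\inteq G'$. The paper gets it only at the end, by feeding $G\conteq G'$ into \cref{contextual_implies_intrinsic}. That step needs $G'$ to have parity, which the paper asserts but never argues. You instead check $G\leq G'$ clause by clause from the definition, using the dominating option itself as the witness for $H\tri G'$, and combine this with \cref{gift_horse}.

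Your route shows that deleting a dominated option preserves intrinsic equivalence for arbitrary games, with no premotivity or parity hypotheses. It also supplies the parity argument the paper omits. In both versions the contextual detour is still essential, but only to transfer local premotivity from $G$ to $G'$. A small aside: reflexivity $X\leq X$ follows by a direct induction from the definition, so you need not route it through \cref{comp_int}.
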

\begin{proof}
  Let $G$ and $G'$ be defined as above.
  Because $G$ is premotive and has parity, $H\leq G^L$ implies that $H\leqc G^L$ by \cref{intrinsic_implies_contextual}.
  Then by \cref{dom_equiv} we have that $G\conteq G'$.
  
  By \cref{cont_eq_premotivity}, $G\conteq G'$ implies that $G'$ is locally premotive.
  Because $G$ is premotive and every follower of $G'$ is a follower of $G$, we have that every follower of $G'$ is premotive.
  Then $G'$ is premotive.
  Because $G$ and $G'$ both have parity and are premotive, we have $G\inteq G'$ by \cref{contextual_implies_intrinsic}.

  Suppose $G$ is $*$-antimonotone.
  All of the options of $G'$ are $*$-antimonotone, so by \cref{options_sam} we have that $G'$ is $*$-antimonotone. 
\end{proof}

\section{Reversibility}
Another method for simplifying games that is commonly used in combinatorial game theory is bypassing reversible options. Reversible options are defined as follows:

\begin{definition}[Reversible options]
  Let $G=\gc{H,G^L}{G^R}$ be a game such that $K\leq G$ for some right option $K$ of $H$.
  Then we say that $H$ is a \emph{reversible left option} of $G$, and we say that $H$ is \emph{reversible through} $K$. 
  We define a \emph{reversible right option} of a game $G$ dually.
\end{definition}

\begin{definition}[Bypassing a reversible option]
  Let $G$ be a game with a left option $H$ that is reversible through a right option $K$.
  When we \emph{bypass the reversible option} from the game $G$ it becomes the game $G'\identical\gc{K^L,G^L}{G^R}$. Here, $K^L$ denotes all left options of $K$ when $K$ is composite, and when $k$ is atomic $K^L=\gc{\bot}{k}$.

  Bypassing a reversible right option is defined dually. Note that, when bypassing a reversible right option, $K^R=\gc{k}{\top}$ when $k$ is atomic.
\end{definition}

Note: When $k$ is atomic, then $k$ is contextually equivalent to the composite game $\atomize{k}$.
This is why in Definition 8.7 we use $\gc{\bot}{k}$ as a stand in for a left option of $k$ when $k$ is atomic.

\begin{remark}
  If $G$ has a reversible option and parity then $G'$, the game $G$ with the reversible option bypassed, also has parity. \end{remark}


In order to make the proof that reversible options can be bypassed while preserving premotivity, parity, and $*$-antimonotonicity (\cref{rev_preserves}) easier, in the following lemma we assume $K\leqc G$ instead of assuming $K\leq G$.
We are able to do this because, when $G$ and $K$ are premotive and have parity, $K\leq G$ implies $K\leqc G$.

\begin{lemma}
  Let $G$ be a game with a left option $H$, and a right option $K$ of $H$ such that $K\leqc G$.
  Then $G\conteq G'$, where $G'=\gc{K^L,G^L}{G^R}$.\label{rev_equiv}\end{lemma}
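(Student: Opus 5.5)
We argue in the style of \cref{dom_equiv}, by induction on the context $X$, proving the two inequalities $G\leqc G'$ and $G'\leqc G$ separately. In each direction we verify two claims for every context $X$ over $\fun{A}$. For $G\leqc G'$ they are: (A) a second player winning strategy for Left in $G+X$ yields one in $G'+X$; (B) a first player winning strategy for Left in $G+X$ yields one in $G'+X$. For $G'\leqc G$ the roles of $G$ and $G'$ are swapped. In every case moves in $X$ are handled by the induction hypothesis, and moves to $G^L$ or $G^R$ are copied directly, since $G$ and $G'$ share these options. Only the options $H$ (in $G$) and $K^L$ (in $G'$) require work.

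\emph{Direction $G\leqc G'$.} The only new case is in (B), where Left's winning first move in $G+X$ is to $H+X$, so Left wins $H+X$ moving second. In particular Left wins $K+X$ moving first, because $K$ is a right option of $H$.
\begin{itemize}
\item If $K$ is composite, Left's winning first move in $K+X$ is either to $K^L+X$ or to some $K+X^L$. In the first case Left plays $K^L$ in $G'+X$. In the second case we use $K\leqc G$: Left wins $K+X^L$ second, hence wins $G+X^L$ second, and by part (A) of the induction hypothesis she wins $G'+X^L$ second. So she plays $X^L$.
\item If $K=k$ is atomic, we use the stand-in option $\gc{\bot}{k}$, which is contextually equivalent to $k$ by the Note. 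The same argument goes through, provided we check that Left's winning move in $k+X$ must be in $X$, or that $\gc{\bot}{k}+X$ behaves like $k+X$.
\end{itemize}

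\emph{Direction $G'\leqc G$.} The new case is in (B) for $G'$: Left's winning move in $G'+X$ is to $K^L+X$, so Left wins $K^L+X$ moving second. Then Left wins $K+X$ moving first, and since $K\leqc G$, $o_L(G+X)\geq o_L(K+X)=\top$. So Left wins $G+X$ first, as required. Claim (A) for $G'\Rightarrow G$ has one extra case: Right moves in $G+X$ to $H+X$. Left must then win $H+X$ moving first, and she does so by answering with $K+X$. We must check that Left wins $K+X$ moving second. Since $K\leqc G$, it suffices that Left wins $G+X$ moving second. But that is exactly what we are trying to prove, so the argument is circular.

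\emph{Main obstacle.} This circularity is the hard step. The plan is to avoid it by strengthening the induction hypothesis: prove by induction on $X$ that Left's second-player win in $G'+X$ implies her second-player win in $G+X$, treating the two claims jointly. When Right moves to $H+X$, Left replies $K+X$, and Right continues in $K+X$. Right's moves to $K^L$ or $K+X^R$ must be answered using the known second-player win in $G'+X$: a move to $K^L+X$ mirrors a position Left already wins in $G'+X$, and a move to $K+X^R$ is handled using $o(K+X^R)\leq o(G+X^R)$ together with the induction hypothesis on $X^R$. This yields a second-player win in $K+X$. If this bookkeeping fails, the fallback is to first prove the intrinsic version $G\inteq G'$ for premotive games with parity, using \cref{trans_leq} and \cref{tri_leq_relation}, and then transfer it via \cref{intrinsic_implies_contextual}.
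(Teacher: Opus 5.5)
Your skeleton is the paper's proof: joint induction on the context $X$ in both directions, copying the shared options $G^L$ and $G^R$, and in the key case passing from $H+X$ to $K+X$. From there you go either to $K^L+X$, or, via $K\leqc G$ and the induction hypothesis, to $G'+X^L$. The gap is your ``main obstacle'', which rests on a misreading. $H$ is a \emph{left} option of $G$, so Right has no move from $G+X$ to $H+X$. The right options of $G+X$ are $G^R+X$ and $G+X^R$, and those of $G'+X$ are $G^R+X$ and $G'+X^R$. So claim (A) for $G'\Rightarrow G$ is proved exactly like the other direction: copy $G^R$, and apply the induction hypothesis (B) to $X^R$. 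There is no circularity.

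As written, however, this step is not established. Your claim (B) for $G'\Rightarrow G$ needs it whenever Left's winning first move is to $G'+X^L$. You leave it either to a ``strengthened induction'' that repeats the confusion (it has Right moving from $K+X$ to $K^L+X$, again a Left option), or to a fallback that fails. The lemma assumes neither premotivity nor parity. Moreover, the paper uses this very lemma, via \cref{cont_eq_premotivity}, to show that $G'$ is premotive in \cref{rev_preserves}. Deriving the lemma from $G\inteq G'$ and \cref{intrinsic_implies_contextual} would therefore be circular. Drop the spurious case, and the rest of your argument goes through as the paper's proof.

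Separately, your atomic case $K=k$ misquotes the Note. The Note says $k\conteq\gc{\gc{\bot}{k}}{\gc{k}{\top}}$, not $k\conteq\gc{\bot}{k}$. The latter is false: $\gc{\bot}{k}$ is odd and $k$ is even, so \cref{matching_parity_contextual} rules out either inequality. Your second alternative, that ``$\gc{\bot}{k}+X$ behaves like $k+X$'', is therefore unavailable. The case is still easy to close directly.
\begin{itemize}
\item If $X$ is composite, every Left move in $k+X$ has the form $k+X^L$, so your $X^L$ argument applies.
\item If $X=x$ is atomic, Left winning $k+x$ means $x(k)=\top$. Left then moves to $\gc{\bot}{k}+x$, whose only right option is $k+x$.
\item For $G'\leqc G$, you only need that $k+X$ is a right option of $\gc{\bot}{k}+X$. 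Then a second-player win in $\gc{\bot}{k}+X$ gives a first-player win in $k+X$, and hence in $G+X$.
\end{itemize}
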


\begin{proof}
  Let $X$ be a game over the poset $\fun{A}$, let $G$ be a game of the form $\gc{H,G^L}{G^R}$, where $K$ is a right option of $H$ such that $K\leqc G$, and let $G'\identical\gc{K^L,G^L}{G^R}$.
  We will prove the following three claims using induction on the depth of game $X$:
  \begin{enumerate}[(A)]
  \item Left has a second player winning strategy in the game $G+X$ if and only if Left has a second player winning strategy in the game $G'+X$.
  \item Left has a first player winning strategy in the game $G+X$ if and only if Left has a first player winning strategy in the game $G'+X$.
  \end{enumerate}

  First we will prove claim (A).
  We will only prove the left-to-right implication, as the proof of the right-to-left implication is nearly identical.
  
  Assume that Left has a second player winning strategy in the game $G+X$.
  We want to show that Left has a second player winning strategy in the game $G'+X$.
  Because Left has a second player winning strategy in the game $G+X$, Left must have a first player winning strategy in all right options of the game.
  We know that Left has a first player winning strategy in the game $G^R+X$ - where $G^R$ is a right option of $G'$ - because the every right option of $G'$ is also a right option of $G$.
  Now let $G'+X^R$ be some right option of the game $G'+X$.
  Because Left must have a first player winning strategy in the game $G+X^R$, Left must also have a first player winning strategy in the game $G'+X^R$ by part (C) of the induction hypothesis.
  This means that Left has a first player winning strategy in every right option of the game $G'+X$, which implies that Left has a second player winning strategy in the game $G'+X$ as desired.

  Next we will prove claim (B).
  The proof of the left-to-right implication is as follows:
  
  Assume that Left has a first player winning strategy in the game $G'+X$.
  We want to show that Left has a first player winning strategy in the game $G+X$.
  By assumption, Left must have a second player winning strategy in some left option of $G'+X$.
  This left option could come in one of three forms: the form $G+X^L$ for some $X^L$ of $X$, the form $G^L+X$ for some $G^L$ of $G$ other than $H$, or the form $K^L+X$ for some $K^L$ of $K$, where $K$ is specified above.

  If Left has a second player winning strategy in a game of the first form, $G+X^L$, then Left must have a second player winning strategy in the game $G'+X^L$ by part (A) of the induction hypothesis, which means that Left has a first player winning strategy in the game $G+X$.
  If Left has a second player winning strategy in a game of the second form, $G^L+X$, then Left must have a first player winning strategy in the game $G+X$.
  Now assume that Left has a second player winning strategy in a game of the third form, $K^L+X$.
  This means that Left has a first player winning strategy in the game $K+X$.
  Because $K\leqc G$, Left must then have a first player winning strategy in the game $G+X$.
  Then Left having a first player winning strategy in $G'+X$ implies the existence of a first player winning strategy for Left in $G+X$.
  
  Finally, we will prove the right-to-left implication.
  Assume that Left has a first player winning strategy in the game $G+X$.
  We want to show that Left has a first player winning strategy in the game $G'+X$.
  By assumption Left must have a second player winning strategy in some left option of $G+X$.
  If the left option in question is of the form $G+X^L$ or $G^L+X$ (for $G^L\neq H$), then the same reasoning used in the ($\Leftarrow$) part of (B) applies.
  Assume that Left has a second player winning strategy in the game $H+X$.
  Then Left has a first player winning strategy in the game $K+X$.

  Because $K\leqc G$, if there exists some $X^L$ such that Left has a second player winning strategy in $K+X^L$, then Left must have a second player winning strategy in $G+X^L$, and so we repeat the argument for the case above.
  Then suppose there is some $K^L$ of $K$ such that Left has a second player winning strategy in $K^L+X$.
  Because $K^L$ is a left option of $G'$, Left has a first player winning strategy in the game $G'+X$.
\end{proof}

\begin{theorem}\label{rev_preserves}\label{rev_premotive}
  Let $G$ be a premotive game with parity, and suppose G has a left option $H$ that is reversible through $K\leq G$, and let $G'$ be the game $G$ with its reversible option bypassed.
  Then $G'$ is premotive and has parity and $G\inteq G'$.
  Moreover, if $G$ is $*$-antimonotone then $G'$ is as well. \end{theorem}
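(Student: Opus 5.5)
The plan follows the proof of \cref{dom_preserves}: first establish contextual equivalence, then transfer premotivity and parity, then use \cref{contextual_implies_intrinsic} to get intrinsic equivalence.

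\textbf{Step 1 (contextual equivalence).} Since $H$ is an option of $G$ and $K$ is an option of $H$, the game $K$ is a follower of $G$. Hence $K$ is premotive, because $G$ is. It also has parity: a game with parity has options with parity, by induction on the definition. So both $K$ and $G$ are premotive with parity, and \cref{intrinsic_implies_contextual} turns $K\leq G$ into $K\leqc G$. Then \cref{rev_equiv} gives $G\conteq G'$.

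\textbf{Step 2 (parity and premotivity of $G'$).} Parity of $G'$ is the remark preceding \cref{rev_equiv}. By \cref{cont_eq_premotivity}, $G\conteq G'$ gives that $G'$ is locally premotive. Every proper follower of $G'$ is a follower of some option of $G'$, and these options are of two kinds:
\begin{itemize}
\item options $G^L$, $G^R$ of $G$, or options $K^L$ of $K$ when $K$ is composite; all of these are followers of $G$ and hence premotive;
\item the stand-in $\gc{\bot}{k}$, which occurs only when $k$ is atomic.
\end{itemize}
For the stand-in, its followers are atomic, so it suffices to show it is locally premotive, i.e.\ that Left has a first player win in $\lcomp{\gc{\bot}{k}}{\gc{\bot}{k}}$. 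Left plays in the second component to $\bot$. Right's only moves are then in $\op{\gc{\bot}{k}}=\gc{\op{k}}{\op{\bot}}$, whose unique right option is $\op{\bot}$. This reaches $\lambda(\op{\bot},\bot)=\top$, so Left wins. Thus $G'$ is premotive with parity, and \cref{contextual_implies_intrinsic}, applied in both directions, yields $G\inteq G'$.

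\textbf{Step 3 ($*$-antimonotonicity).} Assume $G$ is $*$-antimonotone. By \cref{options_sam} (with the premotive games $G\inteq G'$), it suffices that every option of $G'$ is $*$-antimonotone. Options of $G$ and of $K$ are followers of $G$, so they are $*$-antimonotone. The only remaining case is again $\gc{\bot}{k}$ for atomic $k$, whose followers are atomic. It therefore suffices to check local $*$-antimonotonicity,
\[\bot+*\leq\gc{\bot}{k}\leq k+*=\gc{k}{k}.\]
Both inequalities follow directly from the intrinsic definition. For the right-hand one:
\begin{itemize}
\item $\bot\tri\gc{k}{k}$ via the left option $k$, since $\bot\leq k$;
\item $\gc{\bot}{k}\tri k$ via the right option $k\leq k$.
\end{itemize}
The left-hand inequality is dual.

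\textbf{Main obstacle.} I expect the main obstacle to be this atomic case. The stand-in $\gc{\bot}{k}$ is not a follower of $G$, so premotivity and $*$-antimonotonicity do not transfer to it automatically and must be checked by hand. Everything else is a direct rerun of the argument for \cref{dom_preserves}.
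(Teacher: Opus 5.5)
Your proof is correct and follows the same route as the paper: \cref{intrinsic_implies_contextual} gives $K\leqc G$, \cref{rev_equiv} gives $G\conteq G'$, and then \cref{cont_eq_premotivity}, the parity remark, \cref{contextual_implies_intrinsic}, and \cref{options_sam} finish the argument. Your explicit checks on the stand-in $\gc{\bot}{k}$ (its premotivity and local $*$-antimonotonicity), together with your check that every follower of $G'$ is premotive, supply details the paper's proof skips: the paper asserts that every option of $G'$ is a follower of $G$, which fails when $K$ is atomic, and \cref{cont_eq_premotivity} on its own gives only local premotivity.
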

\begin{proof}

  Assume $G$ and $G'$ are as above.
  Because $G$ and $K$ are both premotive and have parity, we have $K \leqc G$ by \cref{intrinsic_implies_contextual}.
  By \cref{rev_equiv}, we know $G\conteq G'$.
  By \cref{cont_eq_premotivity} we have that $G'$ is also premotive.
  The left option $H$ of $G$ has the same parity as each $K^L$, so the game $G'$ has parity as well.
  Both $G$ and $G'$ are premotive and have parity and $G\conteq G'$, which implies $G\inteq G'$ by \cref{contextual_implies_intrinsic}.

  Now, suppose that $G$ is $*$-antimonotone. Every option of $G'$ is a follower of $G$, so all of the options of $G'$ are $*$-antimonotone.
  Therefore $G'$ is $*$-antimonotone by \cref{options_sam}. 
\end{proof}


\begin{remark}[A note on $\atomize{k}$]\label{remark:simple}
  Both the left and right options of the game $\atomize{k}$ are reversible through the atom $k$. When we try to bypass these reversible options, however, we end up returning to $\atomize{k}$.
  This becomes a problem when we are trying to bypass all the reversible options of a game.
  We generalize this issue in the following definition:
  \begin{definition}
    Let $G$ be a game over some poset $A$ with a reversible left option $H$.
    We say $H$ is a \emph{simple} reversible left option if $H=\gc{\bot}{k}$ for some $k\in A$.
    Simple reversible right options are defined dually.
  \end{definition}

\end{remark}

Next are two lemmas relating to these simple reversible options that we will use in the following section.
\begin{lemma}
  Let $x$ and $G$ be games over the poset $A$ such that $x$ is atomic and $G$ is composite.
  If $G$ is odd and Left has a second player winning strategy in $\lcomp{x}{G}$, then $\gc{\bot}{x}\leq G$.
\end{lemma}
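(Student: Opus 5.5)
The plan is to prove the statement by induction on $G$, unfolding the definition of $\gc{\bot}{x}\leqint G$ directly. Since $\gc{\bot}{x}$ is composite, the atomic clause of the definition does not apply. So we must check two things. First, the unique left option $\bot$ of $\gc{\bot}{x}$ satisfies $\bot\tri G$. Second, $\gc{\bot}{x}\tri G^R$ for every right option $G^R$ of $G$.

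The first condition comes from parity alone. $G$ is odd, so it is composite and all of its options are even; in particular it has some left option $G^L$, which is even. By \cref{bot_G_top} we get $\bot\leqint G^L$, hence $\bot\tri G$ by the second clause in the definition of $\tri$.

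For the second condition, we use the strategy hypothesis. Because $x$ is atomic, every move in the comparison game $\lcomp{x}{G}$ takes place in $G$. Its right options are therefore exactly the games $\lcomp{x}{G^R}$. Since Left has a second player winning strategy in $\lcomp{x}{G}$, she has a first player winning strategy in each $\lcomp{x}{G^R}$. Fix some $G^R$, which is even because $G$ is odd. We split into two cases.
\begin{enumerate}
\item If $G^R\identical g^R$ is atomic, then $\lcomp{x}{g^R}$ is atomic. A first player win for Left there means $\lambda(\op{x},g^R)=\top$, i.e.\ $x\leq_A g^R$, so $x\leqint g^R$. Since $x$ is a right option of $\gc{\bot}{x}$, the first clause of $\tri$ gives $\gc{\bot}{x}\tri G^R$.
\item If $G^R$ is composite, Left's winning first move must again be in $G^R$. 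It leads to some $\lcomp{x}{G^{RL}}$ in which Left has a second player winning strategy. Because $G^R$ is even, $G^{RL}$ is odd, and odd games are composite by definition. The induction hypothesis applied to $G^{RL}$ gives $\gc{\bot}{x}\leqint G^{RL}$, hence $\gc{\bot}{x}\tri G^R$ via the left option $G^{RL}$.
\end{enumerate}
Combining both conditions yields $\gc{\bot}{x}\leqint G$.

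The main point requiring care is the case split on whether $G^R$ is atomic. Parity is what guarantees the induction can be applied: the hypothesis ``$G$ odd and composite'' is reproduced at $G^{RL}$. I also expect to have to be careful about the comparison-game conventions. \cref{comp_int} speaks of strategies ``in which Left makes the last move'', whereas here we only have ordinary winning strategies. For that reason I plan to avoid \cref{comp_int} entirely and argue directly from the definitions, as above, using only that atomic components contribute no moves.
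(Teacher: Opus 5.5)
Your proposal is correct and follows the paper's proof essentially step for step. Both argue by induction on $G$, first verify $\bot\tri G$, and then show $\gc{\bot}{x}\tri G^R$ by splitting on whether $G^R$ is atomic, with parity guaranteeing that $G^{RL}$ is odd and composite so the induction hypothesis applies. Your version is slightly more careful than the paper's at one point: \cref{bot_G_top} applies only to even games, so obtaining $\bot\tri G$ through an even left option $G^L$ with $\bot\leqint G^L$ is exactly the step the paper leaves implicit.
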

\begin{proof}
  We will prove this claim via induction on the game $G$.
  Let $G$ and $x$ be games as described above, and assume Left has a second player winning strategy in the game $\lcomp{x}{G}$.
  We want to show that $\gc{\bot}{x}\leq G$.
  First, we know that $\bot\tri G$ by \cref{bot_G_top}.
  Next we must show that $\gc{\bot}{x}\tri G^R$ for all right options $G^R$ of $G$.
  Let $G^R$ be a right option of $G$.
  By assumption, Left must has a first player winning strategy in the game $\lcomp{x}{G^R}$.
  If $G^R=a$ is atomic, then our strategy tells us that $x\leq a$.
  Assume $G^R$ is composite.
  Then let $G^{RL}$ be a left option of $G^R$ such that Left has a second player winning strategy in the game $\lcomp{x}{G^{RL}}$.
  Then, by the induction hypothesis, we have $\gc{\bot}{x}\leq G^{RL}$.
  So $\gc{\bot}{x}\tri G^R$ for all right options $G^R$ of $G$, which implies $\gc{\bot}{x}\leq G$, as desired. 
\end{proof}

\begin{cor}\label{G_to_GL}
  Let $x$ and $G$ be premotive games over some poset $A$ such that $x$ is atomic and $G$ is composite and has parity.
  If $x\leq G$, then there is a left option $G^L$ of $G$ such that $\gc{\bot}{x}\leq G^L$.
\end{cor}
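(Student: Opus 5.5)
Since $x$ is atomic it has no left options, so the hypothesis $x\leq G$ only says that $x\tri G^R$ for every right option $G^R$ of $G$; by itself this produces no left option of $G$. The plan is to get the left option from premotivity via the lookahead property, and then to finish with the preceding lemma.

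\emph{Step 1 (parity).} An atomic game is even. Since $x\leq G$ and both games have parity, \cref{parity} says $G$ is even too. Because $G$ is composite, all of its options are odd, and in particular every left option of $G$ is composite.

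\emph{Step 2 (winning in the comparison game).} By \cref{comp_int} (or \cref{comparison_game_II} together with \cref{top_implies_win}), $x\leq G$ implies that Left has a second player winning strategy in the game $\lcomp{x}{G}$ over $\bool$. This game is even, because $\op{x}$ contributes no moves and $G$ is even. Next I will argue that $\lcomp{x}{G}$ is premotive. The atomic game $\op{x}$ is premotive and $G$ is premotive, so their sum is premotive by \cref{sum_premotive}. Applying the monotone map $\lambda$ preserves premotivity; this is the same fact the paper already used for $x(H)$ in the proof of \cref{intrinsic_implies_contextual}.

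\emph{Step 3 (extracting the left option).} \cref{o(G)_not_P} now gives Left a \emph{first} player winning strategy in $\lcomp{x}{G}$. Left has no moves in $\op{x}$, so her winning first move must be to some $\lcomp{x}{G^L}$ in which she has a second player winning strategy. By Step~1 this $G^L$ is odd and composite. The preceding lemma, applied to $x$ and $G^L$, then yields $\gc{\bot}{x}\leq G^L$, which is the claim.

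The main obstacle is Step~2: checking that the comparison game is premotive, and in particular that applying $\lambda$ to a premotive game gives a premotive game. I would try to prove this by induction on the game. The idea is that a first player winning strategy for Left in $\lcomp{H}{H}$ should transfer to the corresponding comparison game for $\lambda(H)$, using the monotonicity of $\lambda$ together with \cref{fun_equiv_trans}. A secondary point to check is that the strategies produced in Steps~2 and~3 are ordinary winning strategies, with no requirement that Left make the last move, so that the preceding lemma applies as stated.
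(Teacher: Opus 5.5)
Your proposal is correct and follows the paper's proof essentially step for step: \cref{comp_int} gives Left a second player win in $\lcomp{x}{G}$, \cref{o(G)_not_P} upgrades it to a first player win, her winning move must go to some $\lcomp{x}{G^L}$ with $G^L$ odd, and the preceding lemma finishes; the evenness and premotivity of $\lcomp{x}{G}$ that you flag are exactly what the paper uses tacitly. For the premotivity check, argue directly rather than via \cref{fun_equiv_trans}, which is about the stricter relation $\top\tri\cdot$ (this fails for $\lcomp{g}{g}$ with $g$ atomic, even though atomic games are premotive): for each follower $H'$ of $\op{x}+G$, the game $\lcomp{\lambda(H')}{\lambda(H')}$ has the same tree as $\lcomp{H'}{H'}$ and each of its atoms is at least the corresponding atom there, since $c\leq c'$ implies $\lambda(c)\leq\lambda(c')$, so Left's first player winning strategies carry over unchanged.
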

\begin{proof}
  Let $x$ and $G$ be games as described above, and assume $x\leq G$.
  By \cref{comp_int} we know Left has a second player winning strategy in the game $\lcomp{x}{G}$.
  By \cref{o(G)_not_P} we know Left has a first player winning strategy in the game $\lcomp{x}{G}$.
  Then, because $G$ is composite, there must exist some $G^L$ of $G$ such that Left has a second player winning strategy in the game $\lcomp{x}{G^L}$.
  Because $G$ is even, the game $G^L$ is odd.
  Then by the previous lemma we have that $\gc{\bot}{x}\leq G^L$. 
\end{proof}

\section{Canonical Forms}

When studying normal play games, we find the canonical form of a game $G$ by removing all of its dominated options, bypassing all of its reversible options, and recursively repeating this process with the left and right options of $G$.
We would like to do something similar in $\Rex$, but we run into problems when bypassing reversible options.
If $G$ is a game with a simple reversible option, and $G'$ is $G$ with that reversible option bypassed, then by \cref{remark:simple} we have that $G'$ is identical to $G$.
This means that there are games in which it is impossible to bypass all of its reversible options: for example, take the game
\[G\identical\gc{\gc{\bot}{a},\,\gc{\bot}{b}}{\gc{a,b}{\top}}.\]
We have that $G$ is $*$-antimonotone, premotive, and has parity, and we also have that $a\leq G$ and $b\leq G$.
Then both of the left options of $G$ are simple reversible options.

In order to treat this problem, we may attempt to say a game $G$ is in canonical form if none of its options are dominated, all of its reversible options are simple, and the left and right options of $G$ are in canonical form.
This definition of canonical form escapes the looping problem and finds the smallest game equivalent to $G$, with the exception of $\atomize{x}$, which cannot be further reduced even though it is equivalent to $x$.
Because of this, in the case where $G$ is equivalent to an atom, the process described above may not result in the atomic game expected.
As the next proposition shows, this is the \emph{only} exception.

\begin{proposition}\label{canon_atomize}
  Let $G$ be a composite premotive game with parity over some poset $A$ whose options are not dominated and whose reversible options are simple.
  If $G\inteq x$ for some $x$ in $A$, then $G\identical \atomize{x}$.
\end{proposition}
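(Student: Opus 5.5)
Write $\atomize{x}$ for the game $\gc{\gc{\bot}{x}}{\gc{x}{\top}}$, as in \cref{remark:simple}. The plan is to show that every left option of $G$ is reversible, hence simple, and then use non-domination to force the left options to collapse to the single option $\gc{\bot}{x}$. The dual argument gives the right side.

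First I would fix parity. The atom $x$ is even, and $x\inteq G$ with $G$ having parity, so $G$ is even by \cref{parity}. The atom $x$ is premotive, since $\lcomp{x}{x}$ is atomic with value $\top$. Every follower of $G$ is premotive by definition of premotivity. These facts let us use \cref{trans_leq} and \cref{G_to_GL} freely.

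Second, I would show every left option is reversible. Let $G^L$ be any left option. From $G\leq x$ we get $G^L\tri x$. Since $x$ has no left options, there is a right option $G^{LR}$ of $G^L$ with $G^{LR}\leq x$. Combining this with $x\leq G$ and using transitivity through the premotive game $x$ (\cref{trans_leq}) gives $G^{LR}\leq G$. So $G^L$ is reversible through $G^{LR}$. By hypothesis it is then simple, that is, $G^L\identical\gc{\bot}{k}$ for some $k\in A$. Its only right option is $k$, so $G^{LR}=k$ and $k\leq x$.

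Third, I would pin down the left options. Since $x\leq G$ and $G$ is composite with parity, \cref{G_to_GL} gives a left option $G^{L_0}=\gc{\bot}{k_0}$ with $\gc{\bot}{x}\leq\gc{\bot}{k_0}$. By the definition of $\leq$, this forces $\gc{\bot}{x}\tri k_0$. Because $k_0$ is atomic, the only way this can hold is that the right option $x$ of $\gc{\bot}{x}$ satisfies $x\leq k_0$. Together with $k_0\leq x$ and antisymmetry in $A$, this gives $k_0=x$, so $\gc{\bot}{x}$ is a left option of $G$.

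Now take any other left option $\gc{\bot}{k}$ of $G$. I claim $\gc{\bot}{k}\leq\gc{\bot}{x}$, and I would check the two clauses of the definition directly:
\begin{enumerate}
\item the left option $\bot$ of $\gc{\bot}{k}$ satisfies $\bot\tri\gc{\bot}{x}$, witnessed by the left option $\bot\leq\bot$;
\item against the right option $x$ of $\gc{\bot}{x}$ we need $\gc{\bot}{k}\tri x$, witnessed by the right option $k\leq x$.
\end{enumerate}
So any left option $\gc{\bot}{k}$ not identical to $\gc{\bot}{x}$ would be dominated, contradicting the hypothesis. Hence the set of left options is exactly $\{\gc{\bot}{x}\}$. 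The dual argument, using $x\leq G$ to make right options reversible and $G\leq x$ with the dual of \cref{G_to_GL}, shows the right options are exactly $\{\gc{x}{\top}\}$. Therefore $G\identical\atomize{x}$.

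The step I expect to be most delicate is the second one. It needs the transitivity $G^{LR}\leq x\leq G\Rightarrow G^{LR}\leq G$, which requires the middle game $x$ to be premotive, and it needs the hypotheses of \cref{G_to_GL} to hold. I would check explicitly that atoms are premotive and that $G$ is even before invoking either result.
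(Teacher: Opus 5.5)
Your proposal is correct and follows essentially the same route as the paper: you show each left option is reversible through some $G^{LR}\leq x\leq G$, hence simple of the form $\gc{\bot}{k}$ with $k\leq x$, then use \cref{G_to_GL} and non-domination to collapse the left options to $\gc{\bot}{x}$, and dualize. The only difference is the order of the last steps. You first get $k_0=x$ by antisymmetry and then check $\gc{\bot}{k}\leq\gc{\bot}{x}$ by hand. The paper instead asserts that ${G^L}'$ dominates every left option and only afterwards identifies it with $\gc{\bot}{x}$; your ordering makes explicit the transitivity and premotivity facts that the paper leaves implicit.
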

\begin{proof}
  Let $G$ be a composite game as described above, and assume $G\inteq x$ for some $x$ in $A$.
  We want to show that $G$ is identical to $\atomize{x}$.

  Consider any $G^L$ of $G$.
  From $G\leq x$, we have $G^L\tri x$.
  This implies that there is some $G^{LR}$ such that $G^{LR}\leq x$.
  Because $x\inteq G$, we have that $G^L$ is a reversible left option.
  By assumption $G^L$ is simple, i.e., $G^L=\gc{\bot}{y}$ for some $y\in A$, and we have that $y=G^{LR}\leq x$.
  Every left option of $G$ is of the form $\gc{\bot}{y}$ with $y\leq x$.
  Also, because $x\leq G$, there must exist some ${G^L}'$ such that $\gc{\bot}{x}\leq {G^L}'$ by \cref{G_to_GL}.
  The game ${G^L}'$ dominates every left option of $G$.
  We assumed that $G$ had no dominated options, so ${G^L}'$ must be the only left option.
  We know ${G^L}'=\gc{\bot}{y}$ and we know $\gc{\bot}{x}\inteq {G^L}'$.
  Therefore, the only left option of $G$ is ${G^L}'=\gc{\bot}{x}$.
  Dually, the only right option of $G$ is $\gc{x}{\top}$, proving the claim. \end{proof}

Because of \cref{canon_atomize}, we define a canonical game to exclude the case where $G\identical\atomize{k}$.

\begin{definition}
  A game $G$ over the poset $A$ is \emph{canonical} if all of the following hold:
  \begin{itemize}
  \item None of the options of $G$ are dominated,
  \item the only reversible options of $G$ are simple,
  \item $G$ is not of the form $\atomize{x}$ for some atom $x$ in $A$, and
  \item all left/right options of $G$ are canonical.
  \end{itemize}
\end{definition}


\begin{theorem}[Uniqueness of canonical forms]
  Let $G$ and $H$ be premotive games with parity over the poset $A$ which are canonical.
  If $G\inteq H$, then $G\identical H$.
\end{theorem}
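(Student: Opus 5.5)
We argue by induction on the (combined) birthday of $G$ and $H$, following the standard uniqueness argument for normal play canonical forms. The aim is to show that every left option of $G$ is identical to some left option of $H$, and conversely, and dually for right options.

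\emph{Atomic cases.} If both games are atomic, then $G\inteq H$ gives $g\leq h\leq g$, so $g=h$ by antisymmetry. If $G\identical x$ is atomic and $H$ is composite, then $H\inteq x$. Since $H$ has no dominated options and all its reversible options are simple, \cref{canon_atomize} yields $H\identical\atomize{x}$. This is excluded by canonicity, so the case cannot occur.

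\emph{Both composite.} Let $G^L$ be a left option of $G$. From $G\leqint H$ we get $G^L\tri H$, so either some $G^{LR}\leqint H$ or some $H^L$ satisfies $G^L\leqint H^L$.

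In the first case, transitivity (\cref{trans_leq}, using that $H$ is premotive) gives $G^{LR}\leqint G$, so $G^L$ is reversible. By canonicity it must be simple, $G^L\identical\gc{\bot}{y}$ with $y\leqint G$. I would handle this case by applying \cref{G_to_GL} to $y\leq H$. This produces $H^L$ with $\gc{\bot}{y}\leqint H^L$, so again $G^L\leqint H^L$. Thus in both cases some $H^L$ satisfies $G^L\leqint H^L$.

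Next we run the symmetric argument on $H^L$. Using $H\leqint G$, we get $G^{L'}$ with $H^L\leqint G^{L'}$. Transitivity through the premotive $H^L$ gives $G^L\leqint G^{L'}$. Since $G$ has no dominated options, $G^{L'}\identical G^L$, and so $G^L\inteq H^L$. These options are canonical, premotive, and have parity, so the induction hypothesis gives $G^L\identical H^L$. Dual arguments handle right options and the roles of $H$ and $G$, so the option sets coincide and $G\identical H$.

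\emph{Main obstacle.} The delicate point is the reversible case, where $G^{LR}\leqint H$. Here we must make sure that \cref{G_to_GL} applies: its hypotheses require $y$ premotive (true, since it is atomic) and $H$ composite with parity (true). A second issue is the case where $G^L$ is the simple option $\gc{\bot}{y}$ while $H^L$ is a composite game equivalent to it. The induction hypothesis still applies, because $\gc{\bot}{y}$ itself is canonical: it has no dominated options, and its reversible options are trivially simple or absent. I expect to check that carefully, along with the premotivity of all options needed for each transitivity step.
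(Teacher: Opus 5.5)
Your proposal is correct and follows essentially the same route as the paper. You rule out the mixed atomic/composite case via \cref{canon_atomize}, then in the reversible case use simplicity $G^L\identical\gc{\bot}{y}$ together with \cref{G_to_GL} to get $G^L\leq H^L$, and then use transitivity through the premotive $H^L$ plus non-domination to conclude $G^L\inteq H^L$ before applying induction. Your extra check that $\gc{\bot}{y}$ is canonical is harmless but unnecessary, since options of a canonical game are canonical by definition.
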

\begin{proof}
  Let $G$ and $H$ be as above.
  We want to show that $G\identical H$.
  If $g$ and $h$ are atomic, then because $g\inteq h$ we know that $g=h$ by definition.
  Assume that $G$ or $H$ is composite.
  By \cref{canon_atomize} and the definition of a canonical game, we have that both $G$ and $H$ must be composite.
  First we will prove the following four claims:
  \begin{enumerate}[(A)]
  \item For every $G^L$ of $G$, there exists some $H^L$ of $H$ such that $G^L\leq H^L$.
  \item For every $H^L$ of $H$, there exists some $G^L$ of $G$ such that $H^L\leq G^L$.
  \item For every $G^R$ of $G$, there exists some $H^R$ of $H$ such that $G^R\leq H^R$.
  \item For every $H^R$ of $H$, there exists some $G^R$ of $G$ such that $H^R\leq G^R$.
  \end{enumerate}
  
  We will prove claim (A).
  Let $G^L$ be a left option of $G$.
  Because $G\leq H$, we know that $G^L\tri H$.
  From this, we know that either there exists some $H^L$ of $H$ such that $G^L\leq H^L$ or there exists some $G^{LR}$ of $G^L$ such that $G^{LR}\leq H$.
  If we have $H^L$ such that $G^L\leq H^L$, then we are done.
  Assume we have a $G^{LR}\leq H$ for some $G^{LR}$ of $G^L$.
  Because $H\leq G$ and $H$ is premotive, by transitivity we have that $G^{LR}\leq G$, so $G^L$ is a reversible left option of $G$.
  Since $G$ is canonical, $G^L$ is simple, i.e., $G^L\identical \gc{\bot}{a}$ where $a$ is an atomic game.
  Then because $a\leq H$, by \cref{G_to_GL} we know there exists a left option $H^L$ of $H$ such that $\gc{\bot}{a}\leq H^L$, so $G^L\leq H^L$ as desired.
  The proof of (B), (C), and (D) are similar.

  Let $G^\gl$ be the set of left options of $G$, and similarly for $G^\gr$, $H^\gl$, and $H^\gr$.
  We will now prove that $G^\gl=H^\gl$.
  Let $G^L$ be some left option of $G$.
  Then by (A) there exists some $H^L$ of $H$ such that $G^L\leq H^L$.
  By (B) there exists some ${G^L}'$ of $G$ such that $H^L\leq {G^L}'$.
  Because $H^L$ is premotive, by transitivity we have that $G^L\leq {G^L}'$.
  Because $G$ has no dominated options, it must be that $G^L\identical {G^L}'$.
  Then $G^L\leq H^L\leq G^L$, which implies $G^L\inteq H^L$.
  The games $G^L$ and $H^L$ are canonical, so by the induction hypothesis $G^L\identical H^L$.
  Then $G^\gl$ is a subset of $H^\gl$.
  By similar reasoning we have that $H^\gl$ is a subset of $G^\gl$.
  Thus, we have that $G^\gl=H^\gl$.
  The proof that $G^\gr=H^\gr$ is dual.
  Therefore, the game $G$ is identical to the game $H$, as desired.\end{proof}

We say that a game is \emph{finite} if it has finitely many followers.

If $G$ is a finite game, then we can repeatedly remove all of its dominated options, bypass all of its non-simple reversible options, and replace \\$\atomize{x}$ by $x$.
At the end of this process we will have a canonical form.

Therefore we have the following:
\begin{cor}[Existence of canonical forms]
  Every finite game has a unique canonical form.
\end{cor}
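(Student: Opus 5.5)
The plan is to prove existence and uniqueness separately, working throughout in the class of finite premotive games with parity; that is the only class where the uniqueness theorem applies. For uniqueness, suppose $C_1$ and $C_2$ are canonical forms of $G$, each obtained by the reduction steps below. By \cref{dom_preserves} and \cref{rev_preserves}, every reduction step keeps the game premotive with parity and intrinsically equivalent to its predecessor. The final step, replacing $\atomize{x}$ by $x$, is also harmless: $\atomize{x}\conteq x$ by the remark after the definition of bypassing, and both have parity and are premotive, so $\atomize{x}\inteq x$ by \cref{contextual_implies_intrinsic}. By transitivity (\cref{trans_leq}, applied through the premotive middle game at each step) we get $C_1\inteq G\inteq C_2$, hence $C_1\inteq C_2$. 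The uniqueness theorem then gives $C_1\identical C_2$.

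For existence I would argue by induction on the birthday (depth) of $G$, which is finite. Atomic games are already canonical. For composite $G$, I first replace every option by its canonical form, which exists by the induction hypothesis. Justifying this replacement uses \cref{congruent_option_replacement}, applied in both directions, to get contextual equivalence. \cref{contextual_implies_intrinsic} then upgrades this to intrinsic equivalence. Premotivity of the new game follows from \cref{cont_eq_premotivity}, since every follower is either the game itself or a follower of a premotive canonical option. Parity is clear because equivalent options with parity have matching parity by \cref{parity}.

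Next I work only at the top level. I repeatedly delete dominated options, which is allowed by \cref{dom_preserves}. I also repeatedly bypass non-simple reversible options, which is allowed by \cref{rev_preserves}. The options this introduces are the $K^L$, which are followers of a canonical option and hence canonical, or else the simple game $\gc{\bot}{k}$. If the result is $\atomize{x}$, I replace it by $x$. By \cref{canon_atomize}, this is the only situation in which a game equivalent to an atom survives the reduction.

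The main obstacle is showing that this top-level loop terminates, because bypassing can increase the number of options. I would use the multiset of ranks of the options, ordered by the Dershowitz--Manna multiset order, which is well-founded. For the rank I would try the pair (birthday, non-simplicity), compared lexicographically. Deleting a dominated option strictly shrinks this multiset. Bypassing a non-simple $H$ replaces one option of some rank by finitely many options of strictly smaller rank. The $K^L$ have smaller birthday than $H$. The stand-in $\gc{\bot}{k}$ has birthday $1$ and is simple, so it is strictly below any non-simple $H$. Since only finitely many distinct games occur at each stage, the loop halts. When it does, no option is dominated, every reversible option is simple, and all options are canonical, so $G$ has reached a canonical form.
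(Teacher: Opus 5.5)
Your proposal is correct and takes essentially the paper's approach: reduce by deleting dominated options, bypassing non-simple reversible options, and collapsing $\atomize{x}$ to $x$, then obtain uniqueness from the uniqueness theorem via transitivity through premotive games. That transitivity argument actually covers any canonical premotive game with parity equivalent to $G$, not only those produced by your reduction, and your bottom-up induction on birthday and Dershowitz--Manna termination measure fill in details the paper's brief sketch leaves implicit.
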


\chapter{Conclusion}
\section{Analysis of large games of \texorpdfstring{$\Rex$}{Rex}}
Now that we have a method for finding the unique canonical form of a game, we can use this power to more easily analyze large games of $\Rex$.
We will now demonstrate how this may be done by determining the outcome class of the game from \cref{Background}. Here is the original position:

\[\begin{hexboard}
    \rotation{-30}
    \board(10,5)
    \white(1,1)\black(2,1)\black(3,1)\black(4,1)\white(5,1)\white(6,1)\white(7,1)\white(8,1)\black(9,1)\white(10,1)
    \white(1,2)\white(2,2)\black(3,2)\white(4,2)\black(5,2)\black(6,2)\white(7,2)\black(8,2)\white(10,2)
    \white(1,3)\white(5,3)\white(8,3)\white(10,3)
    \white(1,4)\black(2,4)\black(3,4)\white(4,4)\white(5,4)\white(7,4)\white(10,4)
    \white(1,5)\white(2,5)\white(3,5)\white(4,5)\black(5,5)\black(6,5)\white(7,5)\black(8,5)\black(9,5)\white(10,5)
  \end{hexboard}\]

And here is our suggested way of splitting the board:

\[
    \begin{hexboard}[scale=0.80, baseline=(current bounding box.center)]
      \rotation{-30}
      \foreach \i in {1,...,5} { \hex(\i,1) }
      \white(1,1)\black(2,1)\black(3,1)\black(4,1)\white(5,1)
      \foreach \i in {1,...,5} { \hex(\i,2) }
      \white(1,2)\white(2,2)\black(3,2)\label{$1$}\white(4,2)\black(5,2)\label{$2$}
      \foreach \i in {1,...,5} { \hex(\i,3) }
      \white(1,3)\white(5,3)
      \foreach \i in {1,...,4} { \hex(\i,4) }
      \white(1,4)\black(2,4)\black(3,4)\white(4,4)
      \foreach \i in {1,...,3} { \hex(\i,5) }
      \white(1,5)\white(2,5)\white(3,5)
      \edge[\nw\noobtusecorner] (1,1)(5,1)
      \edge[\sw] (1,1)(1,5)
      \node at \coord(1,6.5) {$G_1$};
    \end{hexboard} 
    \begin{hexboard}[scale=0.80, baseline=(current bounding box.center)]
      \rotation{-30}
      \foreach \i in {5,...,8}{ \hex(\i,1) }
      \white(5,1)\white(6,1)\white(7,1)\white(8,1)
      \foreach \i in {5,...,8}{ \hex(\i,2) }
      \black(5,2)\label{$2$}\black(6,2)\white(7,2)\black(8,2)\label{$4$}

      \hex(5,3)\white(5,3) \hex(6,3)\label{$a$} \hex(7,3)\label{$b$} \hex(8,3)\white(8,3)
      \foreach \i in {4,5,7}{ \hex(\i,4) }
      \white(4,4)\white(5,4)\hex(6,4)\label{$c$}\white(7,4)
      \foreach \i in {3,...,6}{ \hex(\i,5) }
      \white(3,5)\white(4,5)\black(5,5)\label{$3$}\black(6,5)
      \edge[\se\noobtusecorner\noacutecorner](3,5)(6,5)
      \node at \coord(4,6.5) {$G_2$};
    \end{hexboard} 
    \begin{hexboard}[scale=0.80, baseline=(current bounding box.center)]
      \rotation{-30}
      \foreach \i in {8,...,10}{ \hex(\i,1) }
      \white(8,1)\black(9,1)\white(10,1)
      \foreach \i in {8,...,10}{ \hex(\i,2) }
      \black(8,2)\label{$4$}\white(10,2)
      \foreach \i in {8,...,10}{ \hex(\i,3) }
      \white(8,3)\white(10,3)
      \foreach \i in {7,...,10}{ \hex(\i,4) }
      \white(7,4)\white(10,4)
      \foreach \i in {6,...,10}{ \hex(\i,5) }
      \black(6,5)\white(7,5)\black(8,5)\label{$5$}\black(9,5)\white(10,5)
      \edge[\nw\noacutecorner](8,1)(10,1)
      \edge[\ne](10,1)(10,5)
      \edge[\se\noobtusecorner](6,5)(10,5)
      \node at \coord(7,6.5) {$G_3$};
    \end{hexboard}
  \]

  First we will analyze $G_1$.
  We can view this as a two-terminal position, with \blackstone*{$1$} and \blackstone*{$2$} being the north and south terminals, respectively.
  Then the outcome poset is $\bool$.
  For ease of addition, however, we will refer to the atoms by which terminals are connected:
  One can show that the canonical form of $G_1$ is $\gc{(1,2)}{\top}$.

  The game $G_3$ is also a position with two terminals, labeled \blackstone*{$4$} and \blackstone*{$5$}.
  Black has a second player winning strategy in $G_3$ and the game is even, so its canonical form is $\top$.

  As shown in \cref{fig:poset_G2}, the game $G_2$ is a 3-terminal position. The canonical form of $G_2$ is quite large, so it is helpful to break it into subpositions.
  If Left were to play in the cell \inlinehex*{$a$}, one can show that the resulting game would have the following canonical form:
  \[C_a=\gc{\gc{(2,3,4)}{(2,3)},\,\gc{(2,3,4)}{(2,4)}}{\gc{(2,3)}{\top},\,\gc{(2,4)}{\top}}.\]
  Similarly, if Left were to play in the cell \inlinehex*{$b$} or \inlinehex*{$c$}:
  \[C_b=\gc{\gc{(2,3,4)}{(2,4)},\,\gc{(2,3,4)}{(3,4)}}{\gc{(2,4)}{\top},\,\gc{(3,4)}{\top}}.\]
  \[C_c=\gc{\gc{(2,3,4)}{(2,3)},\,\gc{(2,3,4)}{(3,4)}}{\gc{(2,3)}{\top},\,\gc{(3,4)}{\top}}.\]
  
  It turns out that the canonical form of $G_2$ is $\gc{C_a,\,C_b,\,C_c}{\top}$.

  Now that we have found the canonical form for each region, all we need is a map $f\colon\bool\times P_3\times \bool\to\bool$ such that $f(a,b,c)=\bot$ if and only if the northern terminal of the larger board is connected to the southern terminal, i.e.,  when \blackstone*{$1$} or \blackstone*{$4$} is connected to \blackstone*{$3$} or \blackstone*{$5$}.
  
  The canonical form of $f(G_1+G_2+G_3)$ is $\top$, meaning that $o(G)=\gl$.
  You may be wondering why the canonical form of $G_2$ is so much larger than that of $G_1$, $G_3$, and $f(G_1,G_2,G_3)$.
  As it turns out, there are only $6$ games over $\bool$ up to equivalence that are both premotive and $*$-antimonotone.
  Of course, we must have $\top$ and $\bot$, along with $X=\gc{\bot}{\top}$.
  The other $3$ games are $\top+*$, $\bot+*$, and $X+*$. This means that there are only $6$ possible options for the canonical form of a game over $\bool$, all of which are rather small.
  
  \section{Future work}
  
  \emph{Realizability of games as positions in $\Rex$:}
  While we can always turn a $\Rex$ position into a game form, not every game form can be realized as a $\Rex$ position.
  For example, if a game form is not $*$-antimonotone and/or premotive then it cannot be realized as a position in $\Rex$.
  Some research has been done into what positions are $\Hex$ realizable,
  and one could use the work in this thesis to explore similar ideas for $\Rex$.

  \emph{PSPACE completeness of $\Rex$:}
  Many combinatorial games have been shown to be PSPACE complete.
  In fact, some antimonotone set coloring games have been shown to be PSPACE complete \cite{Avoidance_pspace}.
  We would like to use our method of determining canonical forms to develop gadgets,
  which we will use to determine if $\Rex$ is PSPACE complete or not.

\bibliographystyle{plain} 
\bibliography{citations} 
\end{document}